\documentclass[11pt,reqno]{article}
\usepackage{amsmath}
\usepackage{amssymb}
\usepackage{amsthm}
\usepackage{algpseudocode}
\usepackage{authblk}

\usepackage{algorithm}
\usepackage{dsfont}
\usepackage[english]{babel}
\usepackage[dvipsnames]{xcolor}
\usepackage{hyperref}
\usepackage{thm-restate}
\usepackage{tikz}
\usepackage{bbold}
\usepackage{tabularx}
\newcolumntype{C}{>{\centering\arraybackslash}X}
\usepackage{graphicx}
\graphicspath{ {./images/} }
\usepackage{microtype}
\usetikzlibrary{patterns}
\usetikzlibrary{decorations.pathreplacing}
\newcommand{\vertex}[1]{\fill (#1) circle (2.2pt);}
\pgfmathsetmacro{\h}{1}

\usepackage{mathtools}
\mathtoolsset{showonlyrefs} 
\usepackage{zref-clever} 
\zcsetup{nameinlink=false} 

\newtheorem{assumption}{Assumption}
\newtheorem{theorem}{Theorem}[section]
\newtheorem{lemma}[theorem]{Lemma}

\newtheorem*{theorem*}{Theorem}



\usepackage[margin=1in]{geometry}
\usepackage{setspace}
\newcommand{\Z}{\ensuremath{\mathds Z}}
\newcommand{\Pro}{\ensuremath{\mathds{P}}}
\newcommand{\E}{\ensuremath{\mathds{E}}}

\newcommand{\cA}{\mathcal A}
\newcommand{\cC}{\mathcal{C}}
\newcommand{\cH}{\mathcal H}
\newcommand{\cF}{\mathcal{F}}

\newcommand{\cP}{\mathcal{P}}
\newcommand{\cI}{\mathcal{I}}
\newcommand{\cB}{\mathcal{B}}
\newcommand{\cD}{\mathcal{D}}
\newcommand{\cL}{\mathcal{L}}
\newcommand{\barI}{\overline{\cI}}
\newcommand{\cE}{\mathcal E}
\newcommand{\tE}{\widetilde{\mathds E}}
\newcommand{\tnu}{\widetilde{\nu}}
\newcommand{\td}{\widetilde{d}}
\newcommand{\tD}{\widetilde{D}}

\DeclareMathOperator{\Var}{\mathrm{Var}}
\DeclareMathOperator{\Cov}{\mathrm{Cov}}
\newcommand{\lam}{\lambda}
\DeclareMathOperator{\Pois}{\mathrm{Pois}}
\DeclareMathOperator{\Bin}{\mathrm{Bin}}

\newcommand\typ{\mathrm{typ}}
\newcommand\Tri{\mathrm{Tri}}
\newcommand{\dual}[1]{#1^{\star \mathrm{Tri}}}

\newcommand\tilt{\mathrm{tilt}}
\newcommand\unif{\mathrm{unif}}
\newcommand\nufix{\nu_{k,Q,R,S,\typ}^{\unif}}
\newcommand\alg{\mathrm{alg}}
\newcommand\TV{\mathrm{TV}}

\newcommand\ind{\mathrm{ind}}
\newcommand\elem{\mathrm{elem}}

\begin{document}

\title{Local large deviations for triangles in sparse random graphs}

\author{Jade Lintott\thanks{Georgia Institute of Technology, School of Computer Science; \texttt{jlintott3@gatech.edu}} \qquad Will Perkins\thanks{Georgia Institute of Technology, School of Computer Science; \texttt{math@willperkins.org}. Supported in part by NSF grant DMS-2348743.} \qquad Corrine Yap\thanks{Mount Holyoke College, Department of Mathematics and Statistics; \texttt{math@corrineyap.com}}}

\date{\today}

\setlength{\unitlength}{1in}

\renewcommand{\arraystretch}{2}

\maketitle
\begin{abstract}
    We revisit a classic topic in probabilistic combinatorics, the lower-tail large-deviation problem for triangles in the random graph $G(n,p)$.  Here we aim for first-order asymptotics for the quantity $\Pro[X =k] $ with $X$ the number of triangles in $G(n,p)$ and $0 \le k \le (1-\epsilon) \E X$, in the sparse regime in which  the logarithmic asymptotics are Poissonian.  When $k=0$ (the case of triangle-freeness) and  $p = p(n)$ is sufficiently small, first-order asymptotics are known via Janson's inequality and results of Stark and Wormald;  when $k$ is sufficiently close to $\E X$ first-order asymptotics are known via local central limit theorems.  Our main result gives first-order asymptotics for all $k$ in the above range when $p = o(n^{-2/3})$, improving upon the result of Frieze that required $p= o(n^{-4/5})$.

    We also characterize, up to vanishing total variation distance, the distribution of the $k$ triangles in the corresponding conditional distribution and give an efficient algorithm to approximately sample from this conditional distribution.  Notably, when $k = \Theta(\mu)$ there is a transition at $p = \Theta(n^{-4/5})$ from an asymptotically uniform triangle distribution to a distribution asymptotically singular to uniform.
\end{abstract}

\section{Introduction}
\label{section: intro}

What is the probability that the Erd\H{o}s–R\'{e}nyi random graph $G(n,p)$ has exactly (or at most) $k$ triangles?  This basic question has connections to many different areas of probability and combinatorics (including extremal combinatorics, nonlinear large deviation theory, statistical physics, and algorithms) and results on the question have utilized many powerful tools, including Janson's inequality, hypergraph containers, Szemer\'edi's Regularity Lemma and the theory of graphons.

Results about this question can be divided into several different categories, depending on the values of $k$ and $p$, and the accuracy desired in approximating $\Pro_p[X=k]$ or $\Pro_p[X\le k]$ (where we denote by $\Pro_p$  the measure of $G(n,p)$ and $X$  the number of triangles).  There are significant differences in behavior when $p$ is constant (or larger than some critical threshold) and when $p$ is sufficiently small; when $k$ is close to $\E_p[X] = \binom{n}{3} p^3$, much smaller (the lower-tail large deviation problem); and whether one aims for first-order asymptotics of $\Pro_p[X=k]$ or its logarithm.   Our focus in this paper will be on first-order asymptotics in the lower-tail regime when $p$ is sufficiently small.  We begin with some brief background and context.

The case  $k=0$, the probability of having no triangles, is particularly well studied in probabilistic combinatorics, since it is closely related to the problem of enumerating triangle-free graphs of a given density.  Janson’s inequality \cite{PoissAproxRevis} yields
\begin{align}
   \log \Pro_p[X=0] = - (1-o(1)) \E_p[X] ,
\end{align}
for $p=o\left(n^{-1/2}\right)$, while for $p =\omega( n^{-1/2})$, powerful tools from extremal combinatorics (regularity methods~\cite{luczak2000triangle} or hypergraph container methods~\cite{balogh2015independent,saxton2015hypergraph}) give
\begin{align}
\log \Pro_p[X=0] = (1+o(1)) \log \Pro_p[G \text{ bipartite}] \,.
\end{align}

The question of first-order asymptotics of $\Pro_p[X=0]$ is even more delicate. 
When $p=o\left(n^{-4/5}\right)$, Janson's inequality gives $\Pro_p[X=0] = (1+o(1)) \exp ( - \E_p[X])$.
The perturbation method of Wormald \cite{perturbation} gives the first-order asymptotics for $\Pro_p[X = 0]$ when $p = o(n^{-2/3})$, and  Stark and Wormald~\cite{stark_wormald_2018} extend this to $p = o(n^{-1/2-\delta})$ for any fixed constant $\delta>0$. The formula is the exponential of a sum whose number of terms grows as $\delta$ goes to $0$ (see~\cite{BinSub} for a generalization and interpretation of these terms). As an illustrative example, when $p = o(n^{-2/3})$, they prove
\begin{align}
\label{eqSW}
     \Pro_p[X=0]=\left(1+o(1)\right)\exp\left(-\frac{1}{6}n^3p^3+\frac{1}{4}n^4p^5-\frac{7}{12}n^5p^7\right).
\end{align}
For $p$ much larger than $n^{-1/2}$, the picture is qualitatively different, with typical triangle-free graphs being bipartite (or close to bipartite), and the asymptotic formulas reflect this (e.g.~\cite{erdos1976asymptotic,promel1996asymptotic,osthus2003densities,jenssen2025evolution}).  The case of $p= \Theta(n^{-1/2})$ is particularly delicate and only partially understood even on the level of logarithmic asymptotics~\cite{jenssen2024lower,jenssen2026non}. 

Moving beyond the case of triangle-freeness, the asymptotics of the logarithm of the lower tail probability $\Pro_p[X \le k]$ are known in some cases.   When $p = o(n^{-1/2})$, the log asymptotics again follow the Poisson paradigm~\cite{PALD,PoissAproxRevis}, while for $p =\omega(n^{-1/2})$, the log asymptotics are given by the solution of a variational problem~\cite{chatterjee2011large,kozma2023lower}, but this variational problem is only solved in some cases~\cite{zhao_2017}.

For first-order asymptotics of $\Pro_p[X=k]$ when $k >0$, there is the well-studied case when $k$ is close to $\E_p[X]$, in which case local central limit theorems can be proved~\cite{gilmer2014local,sah2022local,araujo2024localcentrallimittheorem}.

However, few results exist for first-order asymptotics of $\Pro_p[X = k]$ in the large-deviation regime when $k > 0$. Frieze \cite{frieze1989small} showed that for $0 \leq k \leq \E_p[X]$ and $p = o(n^{-4/5})$, we have
\begin{align}
    \Pro_p[X = k] = (1+o(1)) \frac{\mu^k e^{-\mu}}{k!},
\end{align}
where $\mu = \E_p[X]$. That is, the asymptotics match the Poisson probability. He posed the question of extending this result to larger $p$, which we address here for $p = o(n^{-2/3})$.

The case $k>0$ is qualitatively different from triangle-freeness: while there is only one way to arrange $0$ triangles on a set of $n$ vertices, there are many non-isomorphic ways to arrange $k$ triangles, and so beyond asking for estimates of $\Pro_p[X=k]$, one can ask for the distribution of the $k$ triangles conditioned on this event. Here we will establish both first-order asymptotics for the probability and a characterization of the conditional distribution of triangles.  In particular we will show that two structural changes occur: for $p$ much smaller than $n^{-4/5}$ (the regime in which Frieze's result applies), the distribution of the $k$ triangles is close to uniform in total variation distance.  For $p$ much larger, but less than $n^{-3/4}$, the distribution is close to uniform with an exponential tilt in the number of shared edges of the $k$ triangles; and between $n^{-3/4}$ and $n^{-2/3}$ the distribution is close to the tilted distribution constrained to form only ``elementary'' connected components of triangles.

\subsection{Main results}

Our first main result gives first-order asymptotics for $\Pro_p[X = k]$.
\begin{theorem} 
\label{main theorem}
Fix $\epsilon > 0$. Let $p=o(n^{-2/3})$, let $k$ be an integer satisfying $0\leq k\leq(1-\epsilon) \E_p[X]$, and let $\theta = \theta(n,p,k) := \frac{6k}{n^3p^3}$ and $\mu = \binom{n}{3}p^3 = \E_p[X]$. Then
\begin{equation}\label{eq:mainThrm}
\Pro_p[X=k] = (1+o(1))\frac{\mu^ke^{-\mu}}{k!}\exp\left(\frac{(1-\theta)^2}{4}n^4p^5 - \frac{(1-\theta)^2(2\theta + 7)}{12}n^5p^7\right)\ .
\end{equation}
\end{theorem}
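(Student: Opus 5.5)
We will write $\Pro_p[X=k]$ as a sum over configurations of $k$ triangles. For a set $T$ of $k$ triangles in $K_n$, let $E(T)$ be the set of edges they cover. Then
\begin{align}
\Pro_p[X=k]=\sum_{T} p^{|E(T)|}\,\Pro_p\bigl[\text{no triangle outside } T \mid E(T)\subseteq G\bigr],
\end{align}
where the sum runs over sets $T$ that are \emph{induced}, meaning $E(T)$ spans no further triangle. The plan has three steps. First, estimate the conditional triangle-free probability for each typical $T$ by a Stark--Wormald type expansion. Second, reweight the configuration count by a cluster expansion in the shared edges. Third, combine the two and show that atypical $T$ contribute $o(1)$ of the total.

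\textbf{Step 1 (conditional triangle-freeness).} Given that $E(T)$ is present, the remaining edges form $G(n,p)$ on $\binom n2-|E(T)|$ pairs. We require two things: no triangle among the remaining edges, and no triangle using one or two edges of $E(T)$. Triangles using two edges of $E(T)$ correspond to ``cherries'' in $E(T)$. For typical $T$, most of the $3k$ edges are vertex-disjoint, so there are about $\binom{3k}{2}\cdot\frac{6}{n}\approx 3k^2\cdot 6/n$ cherries, each closed with probability $p$. Triangles using exactly one edge of $E(T)$ number about $3k\cdot n$, each appearing with probability $p^2$.

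We apply the perturbation/Janson method of \eqref{eqSW} to this inhomogeneous family. The log-probability is $-(\text{expected count})$, plus half the expected number of pairs of target triangles sharing an edge, minus the third-order terms. These second- and third-order terms are exactly the $n^4p^5$ and $n^5p^7$ corrections. The target family now includes the $3k$ forced edges, so the densities become $\theta$-dependent. The $k$ triangles themselves contribute $-\mu+k$ adjustments. The outcome should be
\begin{align}
\exp\Bigl(-\mu\,(1-\text{corrections in }\theta)+\tfrac{(1-\theta)^2}{4}n^4p^5-c(\theta)n^5p^7\Bigr),
\end{align}
with $c(\theta)$ computed from the cluster-type terms. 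We would cite the Stark--Wormald framework, or redo Wormald's perturbation argument, which is valid for $p=o(n^{-2/3})$: there, clusters of three or more overlapping triangles beyond trees of size $3$ are negligible.

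\textbf{Step 2 (counting configurations).} Write $|E(T)|=3k-s(T)$, where $s(T)$ counts shared edges, so that $p^{|E(T)|}=p^{3k}p^{-s(T)}$. Summing over $T$ gives $\binom{\binom n3}{k}p^{3k}\,\E_{\unif}[p^{-s(T)}\cdot(\text{Step 1 factor})]$. For uniform $k$-sets, $s$ is approximately Poisson with mean $\lambda\approx \binom k2\cdot 3\cdot 3n/\binom n3\approx k^2\cdot 27/n^3$. This yields the factor $e^{\lambda(p^{-1}-1)}$. Writing $k=\theta n^3p^3/6$, we get $\lambda/p\asymp\theta^2n^3p^5$, which cancels against the $\theta$-parts of Step 1. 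Here the tilt by shared edges matters once $p\gg n^{-4/5}$. Pairs of triangles sharing an edge ($\approx$ ``diamonds'') and the elementary components (three triangles on an edge, chains) give the $n^5p^7$-order terms for $p\gg n^{-3/4}$. We handle this with a polymer/cluster expansion over connected components of triangles, truncated at components of size $\le3$. Finally, $\binom{\binom n3}{k}p^{3k}(1-p^3)^{\dots}$ combined with the $-\mu$ terms gives $\mu^ke^{-\mu}/k!$ times $\exp(O(k^2/n^3))$-type corrections. These corrections must be carried explicitly.

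\textbf{Step 3 (assembly and error control).} We add the exponents and verify algebraically that they collapse to $\frac{(1-\theta)^2}{4}n^4p^5-\frac{(1-\theta)^2(2\theta+7)}{12}n^5p^7$. A useful sanity check is that $\theta=0$ recovers \eqref{eqSW}. We must also show that configurations with non-elementary components, or with atypical cherry and path counts, carry $o(1)$ of the tilted mass. We would do this by first-moment bounds under the tilted measure, using $k\le\mu$ and $p=o(n^{-2/3})$.

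The main obstacle is Step 1 done uniformly over typical $T$. The conditional triangle-free probability must be computed to $1+o(1)$ accuracy, so every error must be $o(1)$ even though $\mu$ and $n^4p^5$ are large. This requires concentration of the cherry and edge-degree statistics of $T$ to additive $o(1)$ precision in the exponent. Where that concentration fails, we would instead fold those statistics into the tilted measure of Step 2 and average exactly.
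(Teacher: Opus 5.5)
Your architecture matches the paper's: summing $p^{e_F}\cP(F)$ over induced configurations, a cumulant expansion for $\cP(F)$ on typical $F$, and a tilt by shared edges with Poisson component counts. The gap is the error control in Step 3, which fails as you describe it.

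You propose first-moment bounds showing that non-elementary configurations carry $o(1)$ of the ``tilted mass.'' If that mass means the weights $p^{e_F}$, the claim is false for $p=n^{-\alpha}$ with $2/3<\alpha<3/4$ and $k=\Theta(\mu)$. The paper shows (Theorem~\ref{theorem: typical-char}, Part 4) that this measure is then concentrated on \emph{non}-elementary configurations. An example is an edge $uv$ with $\ell$ common neighbours plus a bipartite graph on them; such configurations outweigh all typical ones by a factor $n^{\Omega(k)}$. First-moment bounds of your kind only close when $kp=o(1)$, i.e.\ $p=o(n^{-3/4})$. If instead the mass includes $\cP(F)$, you have no estimate of $\cP(F)$ for atypical $F$: your Step 1, and the codegree condition it relies on, need typicality. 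Yet it is exactly $\cP(F)$ that suppresses these configurations.

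The missing idea is to argue in $G(n,p)$ itself. Each forbidden pattern has unconditional probability $o(1)$ by a first moment. The patterns are $K_4$, four edge-connected triangles, $\lceil \log n/6\rceil$ triangles through one vertex, and a $4$-cycle in the triangle-intersection graph. Containing a pattern is increasing and $\{X\le k\}$ is decreasing, so FKG gives the same $o(1)$ bound conditionally on $\{X\le k\}$. You then transfer to $\{X=k\}$ via $\Pro[X=k]\ge(\epsilon/2-o(1))\Pro[X\le k]$. This follows from the typical-configuration asymptotics $W(i)$ together with $W(i-1)/W(i)=(1+o(1))i/\mu\le 1-\epsilon/2$. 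This is where $k\le(1-\epsilon)\mu$ enters; your outline never uses $\epsilon$.

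Step 2 also needs more than you state. A total-variation Poisson approximation of the shared-edge count under the uniform measure does not determine $\E_{\unif}[p^{-s}]$. The tilt moves the mass to $s\approx\lambda/p$, far in the uniform measure's tail. There you need pointwise counts $|\cF_{k,Q,R,S,\typ}|$ with $1+o(1)$ accuracy; the paper gets them from an independent-embedding model. It also needs an exponential-moment bound for $D(F)=\sum_v\deg_F(v)^2$ at fixed $(Q,R,S)$.

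Those counts carry a factor $k!/((k-c)!\,k^{c})\approx e^{-2Q^2/k}$, where $c=2Q+3R+3S$. At $Q\approx\lambda_Q$ this contributes $-2\lambda_Q^2/k$, which is of order $n^5p^7$. It combines with $\lambda_R+\lambda_S$ into the $-36k^3/(p^2n^4)$ term of the exponent, so it cannot be dropped.

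Your numerics are also off:
\begin{itemize}
\item Two uniform triangles share an edge with probability about $18/n^2$, so $\lambda\approx 9k^2/n^2$ and $\lambda/p=\frac{\theta^2}{4}n^4p^5$.
\item Your value $\theta^2n^3p^5$ would make the tilt negligible up to $p=n^{-3/5}$, contradicting your own $n^{-4/5}$ threshold.
\item Piglets and $3$-books enter once $n^5p^7$ is no longer $o(1)$, i.e.\ around $p=n^{-5/7}$, not $n^{-3/4}$.
\end{itemize}
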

The parameter $\theta = \frac{k}{\mu} + o(1)$ represents the fraction of the expected triangle count retained in the lower-tail event. When $k = 0$, we have $\theta = 0$ and the formula reduces to the triangle-free asymptotic in \eqref{eqSW}. Moreover, throughout the range $k \leq (1-\epsilon)\mu$, we have $0 \leq \theta \leq 1-\epsilon + o(1)$. Since $p = o(n^{-2/3})$ implies $n^5p^7 = o(n^4p^5)$, the leading correction to the Poisson probability is $\frac{(1-\theta)^2}{4}n^4p^5$. Thus, when $p = o(n^{-4/5})$, the correction exponent is $o(1)$ and we recover the Poisson asymptotic of Frieze, whereas when $p = \Theta(n^{-4/5})$, this correction becomes order one, indicating the presence of this threshold.

To discuss the distribution of the triangles in the graph we need some notation.  Given a graph $G$ on vertex set $V$, let $\Tri(G) := \left\{T \in \binom{V}{3} : G[T] \cong K_3\right\}$ denote the set of triangles of $G$. A \textit{triangle configuration} on $V$ is a set $F \subseteq \binom{V}{3}$ such that if $G_F$ denotes the graph whose edge set is the union of the edges in $F$, then $\Tri(G_F) = F$. 
Let $\cF_k$ be the set of all configurations of $k$ triangles  (on a set of $n$ vertices).

Let $\nu_k^{\unif}$ denote the uniform distribution over $\cF_k$, and let $\nu_{p,k}^{\tilt}$ denote the distribution over $\cF_k$ biased by the number of edges of $G_F$:
\begin{align}
    \nu_{p,k}^{\tilt}(F)\propto \nu_k^{\unif}(F) p^{e(G_F)} \,.
\end{align}
Let $\nu_{p,k}$ denote the distribution of the triangle configuration of $G(n,p)$ conditioned on the event $\{ X =k \}$. 

Recall that the total variation distance between two probability measures $P$ and $P'$ defined on a common sample space $\Omega$ is
$$
    \|P - P'\|_{\TV} := \frac{1}{2}\sum_{x\in \Omega} \left|P(x)-P'(x)\right| \,.
$$
Our next main result states that $\nu_{p,k}$ is close in total variation distance to a conditioned tilted measure. 
Say a configuration is {\em elementary} if every edge-connected component of triangles is one of the following four types: an isolated triangle, two triangles sharing an edge which we call a {\em diamond}, three triangles $T_1, T_2, T_3$ such that $T_1$ and $T_2$ share an edge, $T_2$ and $T_3$ share an edge, and $T_1$ and $T_3$ do not share an edge, which we call a {\em piglet}, and three triangles sharing a common edge which we call a {\em 3-book}. These are depicted in Figure~\ref{fig: side sharing}.

\begin{figure}[h]
\centering
\begin{tikzpicture}[line width=.8pt, line cap=round, line join=round]
    \coordinate (A) at (0,0);
    \coordinate (B) at (2,0);
    \coordinate (C) at (1,1);
    \draw (A)--(B)--(C)--(A);
    \foreach \P in {A,B,C} {\vertex{\P}}
\end{tikzpicture}
\quad
\begin{tikzpicture}[line width=.8pt, line cap=round, line join=round]
  \coordinate (A) at (0,\h);
  \coordinate (B) at (2,\h);
  \coordinate (D) at (1,0);
  \coordinate (E) at (3,0);                    

  \draw (A)--(B);
  \draw (D)--(E);
  \draw (A)--(D)--(B)--(E)--(B);

  \foreach \P in {A,B,D, E} {\vertex{\P}}
\end{tikzpicture}
\quad
\begin{tikzpicture}[line width=.8pt, line cap=round, line join=round]
  \coordinate (A) at (.5,1.5*\h);
  \coordinate (B) at (2,\h);
  \coordinate (C) at (3.5,1.5*\h);
  \coordinate (D) at (1,0);
  \coordinate (E) at (3,0);                     

  \draw (A)--(B)--(C);
  \draw (D)--(E);
  \draw (A)--(D)--(B)--(E)--(C);

  \foreach \P in {A,B,C,D,E} {\vertex{\P}}
\end{tikzpicture}
\quad
\begin{tikzpicture}[line width=.8pt, line cap=round, line join=round]
  \coordinate (A) at (0,\h);
  \coordinate (B) at (2,\h);
  \coordinate (C) at (0,0);
  \coordinate (D) at (2,0);
  \coordinate (E) at (1,-\h);

  \draw (A)--(C);
  \draw (B)--(D);
  \draw (C)--(D);
  \draw (A)--(D);
  \draw (C)--(B);
  \draw (C)--(E);
  \draw (D)--(E);

  \foreach \P in {A,B,C,D,E} {\vertex{\P}}
\end{tikzpicture}
  \caption{The possible triangle components in an elementary configuration: a triangle, a diamond, a piglet, and a 3-book.}
  \label{fig: side sharing}
\end{figure}
Let $\cF_{k, \mathrm{elem}}$ be the set of $k$-triangle configurations which are elementary, and let $$\nu_{p,k, \mathrm{elem}}^{\tilt} = \nu_{p,k}^{\tilt} (\cdot \mid F \in \cF_{k, \mathrm{elem}})\ .$$

\begin{theorem} 
\label{theorem: dist}
Fix $\epsilon > 0$. For $p=o(n^{-2/3})$ and $k$ an integer satisfying $0\leq k\leq(1-\epsilon) \E_p[X]$,
\begin{align}
    \| \nu_{p,k}- \nu_{p,k,\mathrm{elem}}^{\tilt}\|_{\TV}=o(1) \, .
\end{align} 
\end{theorem}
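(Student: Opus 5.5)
We first write $\nu_{p,k}$ exactly. For a configuration $F\in\cF_k$, the event that $\Tri(G)=F$ is the event that $G_F\subseteq G$ and that no further triangle appears. Given $G_F\subseteq G$, this is the event that no triangle other than those of $F$ forms in the remaining random edges. So
\begin{align}
\nu_{p,k}(F)\propto p^{e(G_F)}\,q(F),
\end{align}
where $q(F)$ is the probability, in $G(n,p)$ with $G_F$ planted, that $\Tri(G)=F$. Comparing with $\nu^{\tilt}_{p,k}$, it suffices to show two things.

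\emph{(i)} The measure $\nu^{\tilt}_{p,k}$ puts mass $1-o(1)$ on $\cF_{k,\elem}$. In fact we would like the stronger statement for the conditioned measure $\nu_{p,k}$ itself.

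\emph{(ii)} On a set $\cF^{\typ}\subseteq\cF_{k,\elem}$ of $\nu^{\tilt}_{p,k,\elem}$-mass $1-o(1)$, the ratio $q(F)/q^*$ is $1+o(1)$ uniformly, for a single constant $q^*=q^*(n,p,k)$.

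Given (i) and (ii), a standard argument finishes the proof: two measures with densities proportional to $w(F)$ and $w(F)(1+o(1))$ on a set of mass $1-o(1)$ in both are $o(1)$-close in total variation. For this we also need that $q(F)\le (1+o(1))q^*$ for atypical $F$, or an a priori upper bound on $q$. We get this from Harris/FKG: planting edges only increases the chance of extra triangles, so $q(F)\le \Pro_p[\text{no triangle avoiding } F\text{'s structure}]$.

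For (ii), estimate $q(F)$ by Janson's inequality, or by the Stark--Wormald perturbation expansion for $p=o(n^{-2/3})$. We would invoke the same machinery used for Theorem~\ref{main theorem}. Conditioned on $G_F$, the potential new triangles are of three kinds:
\begin{itemize}
\item triangles using no planted edges;
\item triangles using exactly one planted edge of $G_F$, of which there are about $3k\cdot np^2$ in expectation;
\item triangles using two planted edges, i.e.\ cherries of $G_F$ closed by one random edge.
\end{itemize}
The key expansion has the form
\begin{align}
\log q(F) = -\mu + c_1 e(G_F)\, np^2 - c_2\, P_2(F)\,p + \text{(Janson correction terms)} + o(1),
\end{align}
where $P_2(F)$ counts paths of length two in $G_F$ not in triangles. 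To obtain an exponent that is constant up to $o(1)$, we show that $e(G_F)$, $P_2(F)$, and the numbers of each elementary component type concentrate under $\nu^{\tilt}_{p,k,\elem}$. The concentration holds to within windows small enough that their products with the coefficients $np^2$, $p$, and so on are $o(1)$, after we absorb the exact dependence on $e(G_F)$ into the tilt. Concretely, the term $e(G_F)np^2$ can be recentered: $e(G_F)=3k-D-2P-3B$, where $D,P,B$ count diamonds, piglets, and books. Its fluctuation is then of order $\sqrt{\E D}\,np^2$. Since $\E D\asymp k^2p^{-1}/n^2\cdot$ (small), this is $o(1)$ when $p=o(n^{-2/3})$. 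Similarly, $P_2(F)p$ fluctuates by roughly $\sqrt{k^2 n^{-1}}\,p = o(1)$, using $k\lesssim n^3p^3$. We would verify these counts by computing the law of the component counts under the tilted measure via a Poisson/local-limit description: isolated-triangle placements are nearly independent, and $D,P,B$ are approximately Poisson with means $\asymp k^2p/n^2$, $k^3p^2/n^4$, and $k^3 p^2/n^5$.

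For (i), we bound the mass of non-elementary components, such as $4$ triangles sharing edges, two triangles sharing only a vertex, or $K_4$. The expected number of such components under the tilt is at most $k^4p^3/n^6\asymp n^6p^{15}$ or $k^2/n = n^5p^6$. Under the true conditional measure we get the same bound, since $q(F)$ only decreases in the relevant comparison. Each of these is $o(1)$ for $p=o(n^{-2/3})$, which identifies precisely why $n^{-2/3}$ is the limit. The main obstacle is (ii): making the Janson/perturbation estimate of $q(F)$ uniform over typical planted configurations with error $o(1)$ in the exponent, including the cross terms between new triangles and planted edges at order $n^5p^7$. We would first try to reuse, with $G_F$ planted, the cluster expansion used for Theorem~\ref{main theorem}, noting that the dependence of the exponent on $F$ enters only through the few statistics above.
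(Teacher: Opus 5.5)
Your skeleton matches the paper's: write $\nu_{p,k}(F)\propto p^{e_F}q(F)$ with $q=\cP$, restrict to typical $F$, expand $q(F)$ by the cluster expansion, and show that its $F$-dependence through $e_F,Q,R,S,D(F)$ washes out. Your fluctuation orders $\sqrt{\lambda_Q}\,np^2=o(1)$ and $p\cdot k/\sqrt n=o(1)$ are also the right ones. However, both steps that connect $\nu_{p,k}$ to the tilt fail as proposed.

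\textbf{Step (i).} Claim (i) for the tilt is false in part of the range. For $p=n^{-\alpha}$ with $2/3<\alpha<3/4$ and $k=\Theta(\mu)$, Theorem~\ref{theorem: typical-char}, Part~\ref{theorem: typical-char-part4} gives $\nu^{\tilt}_{p,k}(\cF_{k,\elem})=o(1)$. There, configurations built on one edge $uv$ with $\ell$ common neighbours and $(k-\ell)/2$ edges among them outweigh the typical ones, since each shared edge earns a factor $p^{-1}$.

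Your first-moment count treats triangles as independently placed and covers only bounded structures; the count should also read $k^4p^{-3}/n^6\asymp n^6p^9$. For a component on $v$ vertices the tilted count carries a factor $\sum_m (Cvp)^m$, which is controllable only when $kp=o(1)$, i.e.\ $p=o(n^{-3/4})$. So there is no tilted bound to transfer, and ``$q(F)$ only decreases'' cannot substitute for one. Separately, two triangles sharing just a vertex are two isolated-triangle components, hence elementary, and there are $\Theta(k^2/n)$ of them in expectation, which need not be $o(1)$.

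The missing idea is to argue in $G(n,p)$ directly. Since $\{X\le k\}$ is decreasing and $\{H\subseteq G\}$ is increasing, Harris gives $\Pro[N_H>0\mid X\le k]\le \E N_H=o(1)$ for each forbidden $H$. These are $K_4$, four edge-connected triangles, the $C_4$ intersection patterns, and $\lceil \log n/6\rceil$ edge-disjoint triangles at a vertex; the last yields $\deg_F\le\log n$, which the codegree hypothesis of the cluster expansion needs. Because $\{X=k\}$ is not monotone, you then also need $\Pro[X=k]\ge(\epsilon/2-o(1))\Pro[X\le k]$. The paper gets this from $\Pro[\Tri(G)\in\cF_{i,\typ}]\sim W(i)$ for all $i\le k$ together with $W(i-1)/W(i)=(1+o(1))i/\mu\le 1-\epsilon/2$.

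\textbf{The comparison step.} Your ``standard argument'' needs the good set (where $q(F)=(1+o(1))q^*$) to carry mass $1-o(1)$ under $\nu_{p,k}$, not just under the tilt. That means showing $\sum_{F\text{ not good}}p^{e_F}q(F)=o\bigl(q^*\sum_F p^{e_F}\bigr)$, i.e.\ uniform integrability of $q(F)/q^*$, which variance bounds cannot give.

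The Harris upper bound you propose ignores triangles with one planted edge and is off by a factor $e^{\Theta(knp^2)}$, which is $e^{\Theta(n^4p^5)}$ when $k=\Theta(\mu)$. Moreover, $q(F)\le(1+o(1))q^*$ is false even for typical elementary $F$. Indeed $q(F)$ contains $e^{-\lambda D(F)}$ with $\lambda=p/2-np^3$. When all components are pairwise vertex-disjoint, $D(F)$ sits about $36k^2/n$ below its mean, so $q(F)/q^*=e^{\Theta(k^2p/n)}=e^{\Theta(n^5p^7)}$, which is unbounded for $k=\Theta(\mu)$ and $p\gg n^{-5/7}$.

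What is needed is an exponential-moment estimate, which the paper obtains as follows:
\begin{itemize}
\item It conditions on $(Q,R,S)$, so $e_F$ is fixed and the law is uniform on $\cF_{k,Q,R,S,\typ}$.
\item It proves $\E[e^{-\lambda D(F)}]=(1+o(1))e^{-\lambda\E D(F)}$ for $\lambda=O(p)$, via a martingale Laplace-transform recursion in the independent-embedding model.
\item It applies this at $\lambda$ and $2\lambda$ and uses Cauchy--Schwarz to get an $L^1$ bound on the density ratio.
\item It shows separately that, under both $\nu_{p,k,\typ}$ and the tilt, the law of $(Q,R,S)$ is $o(1)$-close to the product Poisson law. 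The means are $\lambda_Q=9k^2/(pn^2)$, $\lambda_R=108k^3/(p^2n^4)$ and $\lambda_S=18k^3/(p^2n^4)$; your stated means have the power of $p$ inverted. This step uses Poisson tail bounds against weights $e^{O((np^2+p)(Q+R+S))}$.
\end{itemize}
Alternatively, you could compare the good set's mass with the global asymptotic $\Pro[X=k]\sim W$ of Theorem~\ref{main theorem}, but that proof already contains the same exponential-moment estimate.
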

While this theorem characterizes the conditional distribution for the entire range $p = o(n^{-2/3})$, we can be even more specific and show where the exponential tilting becomes significant (around $p=n^{-4/5}$) and when the restriction to elementary components becomes significant (around $p= n^{-3/4}$). 

\begin{theorem}\label{theorem: typical-char}
Under the assumptions of Theorem~\ref{theorem: dist}, 
    \begin{enumerate}
    \item (Closeness to uniform) If $p = o(n^{-4/5})$, then $\|\nu_{p,k}^{\tilt} - \nu_k^{\unif}\|_{\TV} = o(1)$. \label{theorem: typical-char-part1}
    \item (Far from uniform)  If  $n^{-4/5} \ll p \ll n^{-3/4}$ and $k =\Theta(\mu)$, then $\|\nu_{p,k}^{\tilt} - \nu_k^{\unif}\|_{\TV} = 1-o(1)$. \label{theorem: typical-char-part2}
        \item (Closeness to tilted) If $p = o(n^{-3/4})$, then $\|\nu_{p,k, \elem}^{\tilt} - \nu_{p,k}^{\tilt}\|_{\TV} = o(1)$. \label{theorem: typical-char-part3}
        
        \item (Far from tilted) If $p = n^{-\alpha}$ for some $\frac23 < \alpha < \frac34$ and $k= \Theta(\mu)$, then 
        $\|\nu_{p,k, \elem}^{\tilt} - \nu_{p,k}^{\tilt}\|_{\TV} = 1- o(1)$ \label{theorem: typical-char-part4}
    \end{enumerate}

\end{theorem}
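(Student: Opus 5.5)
The four parts all reduce to understanding the law of a few local statistics of the triangle configuration under the uniform measure $\nu_k^{\unif}$. For $F\in\cF_k$ let $D(F)$ be the number of pairs of triangles sharing an edge. Let $B(F)$ count the edge-connected components that are not elementary; the smallest such components are $4$-triangle clusters (four triangles in a path or star pattern, a $4$-book, or $K_4$ minus structures). Since $e(G_F)=3k-D(F)+(\text{corrections from larger components})$, the tilt is $p^{e(G_F)}\propto p^{-D(F)}\cdot(\text{higher order})$.

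The heuristic counts come from the fact that under $\nu_k^{\unif}$ triangles are nearly independent uniform triples. Each pair of triangles shares an edge with probability about $9/n^2$, so
\begin{align}
\E_{\unif} D \approx \binom{k}{2}\frac{9}{n^2}\asymp \frac{k^2}{n^2}.
\end{align}
With $k\asymp n^3p^3$ this is $\asymp n^4p^6$. Under the tilt each shared edge gains a factor $p^{-1}$, so $\E_{\tilt}D\asymp n^4p^5$; this matches the correction term in Theorem~\ref{main theorem}. In the same way, a $4$-triangle cluster with three shared edges has
\begin{align}
\E_{\unif}\asymp k^4 n^{-6}\asymp n^6p^{12}, \qquad \E_{\tilt}\asymp n^6p^{9}.
\end{align}
The quantity $n^6p^9$ is $o(1)$ iff $p=o(n^{-2/3})$, which cannot be right for the $n^{-3/4}$ threshold. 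I therefore need to recount with care: triangle configurations also constrain how vertices are shared, and clusters sharing only a vertex cost nothing. The plan is to redo this bookkeeping exactly, using the vertex count $v$ and edge count $e$ of each cluster. A cluster on $v$ vertices with $e$ edges has tilted expectation about $n^{v}p^{e}$ (times a factor from $\theta$). Elementary clusters give $n^3p^3$, $n^4p^5$, $n^5p^7$, $n^5p^7$. The nonelementary $4$-clusters give $n^6p^9$ (path), $n^6p^9$ (4-book), and $n^5p^8$. Checking which is $\omega(1)$ exactly when $p\gg n^{-3/4}$, I expect the relevant one to be the $4$-triangle $K_4$-free cluster on $n^6p^9$ adjusted by the Poissonization factor. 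Identifying this correctly is a bookkeeping step I will settle first.

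For parts 1 and 3 I compute ratios of partition functions by the standard switching method. I estimate $|\{F:D(F)=d\}|/|\{F:D(F)=d-1\}|$ by double counting "merge a diamond / split a diamond" moves. This shows that under $\nu_k^{\unif}$ the variable $D$ is asymptotically Poisson with mean $\lambda\asymp k^2/n^2$, and under the tilt it is Poisson with mean $\lambda/p$. Then
\begin{align}
\|\nu^{\tilt}-\nu^{\unif}\|_{\TV}\le \TV\bigl(\Pois(\lambda),\Pois(\lambda/p)\bigr)+o(1).
\end{align}
This is $o(1)$ when $\lambda/p=o(1)$, that is when $n^4p^5=o(1)$, which gives part 1. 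For part 3 I show by a first-moment computation under $\nu^{\tilt}$, again via switchings, that $\E_{\tilt}B=o(1)$ when $p=o(n^{-3/4})$. Markov's inequality then gives $\nu^{\tilt}(\cF_{k,\elem}^c)=o(1)$, and conditioning on a $1-o(1)$ event moves the measure by $o(1)$ in total variation.

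For parts 2 and 4 I use the distinguishing statistic $D$ (respectively $B$). For part 2 I take $k=\Theta(\mu)$. Under the uniform measure $D$ is concentrated around $\lambda\asymp n^4p^6=o(n^4p^5)$. Under the tilt, $D$ is concentrated around $\Theta(n^4p^5)\to\infty$, which I get from a second-moment or Poisson-approximation argument. The event $\{D>\tfrac12\E_{\tilt}D\}$ then separates the two measures. Part 4 is analogous: under the tilt $B$ has mean $\to\infty$ with variance $o(\text{mean}^2)$, so $B\ge1$ with probability $1-o(1)$, while $\nu_{\elem}^{\tilt}$ gives this event probability $0$.

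The main obstacle is making the switching estimates uniform when the tilted mean of $D$ is large, up to $n^4p^5$, which can be as large as $n^{4/3}$. In that range the configurations are no longer a sparse perturbation. I would handle this by proving a local ratio estimate for the counts $N_d$ of configurations with $D=d$, showing that $N_d/N_{d-1}=(1+o(1))\lambda/d$ uniformly in $d\le C\lambda/p$. I would obtain this by conditioning on the elementary component structure and counting embeddings of components directly.
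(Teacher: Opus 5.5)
There is a genuine gap, and it sits in the ``bookkeeping step'' you postpone. The $n^{-3/4}$ threshold does not come from any cluster of bounded size, and no Poissonization factor changes this: under the tilt, a cluster with $j$ triangles picks up a factor of roughly $(k/\mu)^j\le 1$. An edge-connected cluster on $v$ vertices has at least $2v-3$ edges, so its tilted first moment is at most of order $n^vp^{e}\le p^{-3}(np^2)^v$. Together with $n^5p^8$ and $n^4p^6$ for the dense cases on $5$ and $4$ vertices, this shows every bounded non-elementary cluster has tilted mean $o(1)$ throughout $p=o(n^{-2/3})$, as your own numbers already indicate.

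The threshold instead comes from components whose size grows with $n$. Build a component vertex by vertex along a spanning tree of its triangle-dual. This uses $2v-3$ edges plus $m$ excess edges, and each excess edge has at most $v$ choices of other endpoint. So the total weight of components on $v$ vertices is at most $(Cnp^2)^vp^{-3}\sum_{m\ge0}(Cvp)^m$. Since $v$ can be as large as $k+2$, the $m$-sum is controlled only when $kp=o(1)$, i.e.\ $n^3p^4=o(1)$, i.e.\ $p=o(n^{-3/4})$.

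This is exactly the paper's proof of Part 3. It bounds the tilted expected number of components on $v$ vertices by $\sum_{H\in\cH_v}p^{e(H)}Z_{k-|\Tri(H)|}/Z_k$, obtained by deleting the whole component. Here $Z_k=\sum_{F\in\cF_k}p^{e_F}$ satisfies $Z_{k-t}/Z_k\le(1-c(\epsilon))^t$, which follows by adding disjoint triangles. The set $\cH_v$ is then counted with a rooted-forest encoding. A switching or first-moment argument that only treats bounded clusters would, taken as a proof of Part 3, equally ``prove'' it for every $p=o(n^{-2/3})$, contradicting Part 4. Your Parts 1 and 2 need the same large-component bound too. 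The Poisson description of $D$ under $\nu^{\tilt}_{p,k}$ is only meaningful once you know that no large non-elementary component carries mass.

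For Part 4, the proposed route cannot work as set up. All bounded non-elementary clusters have tilted mean $o(1)$, so the event $\{B\ge 1\}$ is not produced by many small clusters. The mean/variance computation you have in mind rests on the sparse, nearly-independent picture, and so it has nothing to count. In this regime the tilted measure is not a perturbation of the uniform one at all.

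What is needed is an energy--entropy comparison of partition functions: exhibit an explicit family of non-elementary configurations whose total tilted weight dwarfs that of all typical ones. The paper takes an $\ell$-book on an edge $uv$ and adds $(k-\ell)/2$ edges between the two halves of its page set, each closing two triangles. This gives a family $\cB$ in which all $k$ triangles lie in one component. It satisfies $\log Z_{\cB}=\frac k2\log\frac{\ell^2p}{k}+o(k\log n)$, whereas $Z_{\typ}\le(1+o(1))\frac{\mu^k}{k!}e^{\lambda_Q+\lambda_R+\lambda_S}=e^{o(k\log n)}$ when $k=\Theta(\mu)$. One can choose $\sqrt{k/p}\ll\ell\ll k$ with polynomial gaps exactly when $kp=n^{3-4\alpha}\to\infty$, i.e.\ $\alpha<3/4$. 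This gives $\nu^{\tilt}_{p,k}(\cF_{k,\typ})=o(1)$, and Lemma~\ref{lemma:normal-is-typical} transfers it to $\cF_{k,\elem}$.

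Once Part 3 is available, your Part 2 argument via $D$ is fine, though the paper's is shorter. The tilted weight of typical configurations with exactly one diamond is $\sim\lambda_Q\to\infty$ times that with none, so the tilted measure gives $\{Q=0\}$ probability $o(1)$, while $\nu^{\unif}_k(Q=0)=1-o(1)$.
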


\zcref[S]{theorem: dist} suggests a route towards an efficient sampling algorithm for $G(n,p)$ conditioned on $\{ X= k\}$:  sample a configuration of $k$ triangles from $\nu_{p,k,\mathrm{elem}}^{\tilt}$ and then sample the remaining  edges with probability $p$,  conditioning on forming no additional triangles. Using this template and some Markov chain analysis, we provide an efficient sampling algorithm for the conditional distribution.

\begin{theorem}
\label{theorem: sample}
Fix $\epsilon > 0$. For $p=o(n^{-2/3})$ and $k$ an integer satisfying $0\leq k\leq(1-\epsilon) \E_p[X]$,
 let $\pi_{p,k}$ denote the distribution of graphs $G \sim G(n,p)$ conditioned on having exactly $k$ triangles. There is an $O(n^3\log n)$-time algorithm to sample from a distribution $\pi_{\alg}$ such that 
    \begin{align}
        \| \pi_{\alg}-\pi_{p,k}\|_{\TV}=o(1).
    \end{align}
\end{theorem}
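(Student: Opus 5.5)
The algorithm follows the template suggested after \zcref[S]{theorem: dist}, in two stages. First, sample a triangle configuration $F$ approximately from $\nu_{p,k,\elem}^{\tilt}$. Second, sample the remaining edges independently with probability $p$, conditioned on creating no triangles beyond those in $F$. Write $\pi_{p,k}(G)=\nu_{p,k}(\Tri(G))\cdot\pi(G\mid \Tri(G))$. By \zcref[S]{theorem: dist}, the first factor may be replaced by $\nu_{p,k,\elem}^{\tilt}$ at $o(1)$ cost in total variation. It then suffices that both stages are $o(1)$-accurate in total variation and together run in $O(n^3\log n)$ time, since total variation errors add along the composition.

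\textbf{Stage 1 (sampling $F$).} An elementary configuration is a multiset of components (triangles, diamonds, piglets, 3-books) placed disjointly in edges with no extra triangles. We first sample the component-type counts $(a_1,a_2,a_3,a_4)$ with $a_1+2a_2+3a_3+3a_4=k$. Their law under $\nu^{\tilt}_{p,k,\elem}$ is, up to $1+o(1)$ factors, a product of explicit terms: the count of labelled placements of each type times $p^{e}$ over multiplicities. For instance, diamonds contribute roughly $(n^4p^5/4)^{a_2}/a_2!$ relative to triangles. These factors come from the same computation that yields the correction terms in \zcref[S]{main theorem}, and they show the counts concentrate, with $a_2,a_3,a_4$ approximately Poisson or Gaussian. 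We sample the counts by computing these weights for all feasible small values, which takes polynomial time and is dominated by the rest. Given the counts, we place components uniformly at random on vertex sets and reject if they create unintended triangles or overlap in edges. The rejection probability is $o(1)$ when $p=o(n^{-2/3})$, since the expected number of collisions among $O(n^3p^3)$ components is small. Any residual bias is exactly the non-uniformity of the count of valid placements. I would either show this bias is $1+o(1)$ uniformly over typical counts, via a second-moment or switching computation on collisions, or correct it with a Glauber-type chain that moves one component at a time, which mixes in $O(k\log k)$ steps by path coupling.

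\textbf{Stage 2 (edges given $F$).} Here we need $G(n,p)\cup G_F$ conditioned on $\Tri=F$. The plan is a Glauber dynamics on edges outside $G_F$: pick a non-pair, and set it to an edge with probability $p$ unless doing so creates a triangle. Each update costs $O(1)$ expected time, using adjacency sets and the $O(np)$ degrees. The chain runs for $O(n^2\log n)$ steps, which is well within $O(n^3\log n)$. For mixing, path coupling works because the influence of one edge flip on another edge's conditional law is governed by the number of common neighbours, which is about $np^2\cdot n=np^2$ per pair. The total influence summed over edges is $O(n\cdot np\cdot p)=o(1)$, so Dobrushin's condition holds. For the initial state, we take $G(n,p)$ with every edge closing an unwanted triangle deleted; this is a valid state and is close to stationary.

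\textbf{Main obstacle.} I expect the main difficulty to be Stage 1: certifying that the sampled distribution on counts and placements matches $\nu_{p,k,\elem}^{\tilt}$ to within $o(1)$ in total variation, uniformly up to $k=(1-\epsilon)\mu$. In that range the number of components is $\Theta(n^3p^3)$, the disjointness constraints are non-negligible, and they interact with the tilt; near $p\approx n^{-2/3}$ this is exactly where the $n^5p^7$ terms arise. I would try first to reuse the cluster and perturbation estimates behind \zcref[S]{main theorem}, which should give the placement counts to $1+o(1)$ accuracy. Only if that fails would I fall back on the component-swap Markov chain.
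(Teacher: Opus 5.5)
Your Stage~1 matches the paper's and is not where the difficulty lies. Conditioned on the counts, embedding the components independently and rejecting invalid outcomes gives \emph{exactly} the uniform law on valid configurations with those counts, because every valid configuration has the same number $t!\,Q!\,R!\,S!\prod_i|\mathrm{Aut}(H_i)|$ of preimages. The ``residual bias'' in your count weights is just the validity probability, and a first-moment union bound shows it is $1-O(k^2/n^2)=1-o(1)$ uniformly in $(Q,R,S)$. Sampling the counts from these explicit product weights, rather than from $\Pois(\lambda_Q)\times\Pois(\lambda_R)\times\Pois(\lambda_S)$ as the paper does via its Poisson approximation of the tilted count marginal, is a legitimate variant. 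One correction: the diamond factor is governed by $\lambda_Q=9k^2/(pn^2)=\theta^2n^4p^5/4$, not $n^4p^5/4$.

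The genuine gap is in Stage~2. Dobrushin's condition, and path coupling on all of $\Omega_F$, need a \emph{worst-case} bound on influences. Here a disagreement at $\{i,j\}$ changes the update law of every pair $\{i,v\}$ with $\{j,v\}$ present (and symmetrically), each by up to $p$. In a valid worst-case state, such as $j$ the centre of a large star, there are $\Theta(n)$ such $v$. So the total influence is $\Theta(np)\to\infty$ and there is no contraction. Your estimate silently uses typical degrees, which the chain is not shown to maintain. Moreover, the displayed $O(n\cdot np\cdot p)=O(n^2p^2)$ is not $o(1)$ for $p$ near $n^{-2/3}$.

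The missing ingredient is degree control along the trajectory:
\begin{enumerate}
\item Couple your chain (same pair, same $p$-coin) monotonically below the unconstrained chain that resamples a uniform pair outside $G_F$. The dominating chain has stationary law $G_F\cup G(n,p)$. Under that law, $\Delta\le D:=C_0(np+\log n)$ with probability $1-n^{-10}$, by Chernoff plus $\Delta(G_F)\le\log n$ for typical $F$.
\item The same degree bound then holds for $X_t$ at each time after an $O(n^2\log n)$ burn-in. It holds immediately if you start the dominating chain from a stationary sample and your chain from its pruning, which fits your proposed initial state.
\item Run path coupling only on $\Omega_D=\{G\in\Omega_F:\Delta(G)\le D\}$. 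Check that $\Omega_D$ is connected by distance-one moves: delete $X\setminus Z$, then add $Z\setminus X$. This gives contraction $1-(1-2Dp)/\gamma$, with $\gamma=\binom n2-e_F$ and $2Dp=o(1)$.
\item Absorb the $O(n^{-8})$ per-step chance of leaving $\Omega_D$ over the $O(n^2\log n)$ steps.
\end{enumerate}
Without this, your mixing claim, and hence the $o(1)$ total variation bound for the output, is unproved.
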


\subsection{Proof outline}
\label{secProofOutline}

We begin by decomposing the event $\{X(G)=k\}$ according to the triangle configuration of $G$. We write
\begin{align}
    \Pro_p[X(G)=k]
    =\sum_{F\in\cF_k}\Pro_p[\Tri(G)=F]
    =\sum_{F\in\cF_k} p^{e(G_F)}\cP_{p}(F),
\end{align}
where $\cP_{p}(F)$ is defined to be the probability that, after conditioning on the presence of the edges of $G_F$, the remaining  edges  create no additional triangles  when included independently with probability $p$. This separates the problem into a counting problem for triangle configurations and a subgraph non-existence problem of forbidding new triangles. 

The next step is to show that only a simple subclass of configurations contributes to the first-order asymptotics of the probability. This  allows us to ignore particularly difficult to analyze structures in triangle configurations. Using first-moment bounds together with the FKG inequality under the event $\{X(G)\leq k\}$, we prove that with high probability the triangle configuration avoids  a set of specific forbidden subgraphs.

We then estimate $\cP_{p}(F)$ uniformly over the remaining typical configurations. After the edges of $G_F$ are fixed, a potential additional triangle may use zero, one, or two edges already present in $G_F$, so the event of creating no new triangles can be encoded as the event that a certain non-uniform hypergraph contains no occupied hyperedge. We follow the framework of Mousset, Noever, Panagiotou, and Samotij in \cite{BinSub} to show that the expectation  over typical $F$ of $\cP_{p}(F)$ is 
\begin{align}
   \exp\left(-\frac{1}{6}n^3p^3-3knp^2+\frac{1}{4}n^4p^5+6kn^2p^4-\frac{9k^2p}{n}-\frac{7}{12}n^5p^7  +o(1)\right) .
\end{align}

The remaining task is to evaluate the weighted sum over typical configurations. A configuration with $\zeta$ shared edges receives edge weight $p^{3k-\zeta}$, so shared edges are rewarded by a factor of $p^{-\zeta}$, but configurations with many shared edges are combinatorially rarer. We show that these two effects balance in a narrow window of values of $\zeta$, while configurations outside that window contribute only $o(1)$ to the full sum. Summing over this window contributes the remaining terms to the exponent; after combining them with the above estimate for $\cP_p(F)$, the corrections factor as 
$$\frac{(1-\theta)^2}{4}n^4p^5 - \frac{(1-\theta)^2(2\theta + 7)}{12}n^5p^7$$
yielding~\eqref{eq:mainThrm}.

\subsection{Notation}
For convenience, we restate here the definitions from the introduction and introduce notational conventions. 

Throughout the paper, we assume the conditions of the main results.
\begin{assumption}
    \label{assumption1}
    Fix a constant $\epsilon>0$. Then we take $p$ and $k$ such that
    \begin{itemize}
\item $p=o(n^{-2/3})$
\item $k$ is an integer satisfying  $0 \le k \le (1-\epsilon)\E_p[X(G)]$ \,.
    \end{itemize}
\end{assumption}

For a graph $H$, we write $V(H)$ and $E(H)$ for its vertex and edge sets and $v(H) := |V(H)|$ and $e(H):=|E(H)|$ for its order and size, respectively. We denote by $\deg_H(v)$  the degree of a vertex $v$ in $H$. All asymptotics are taken as $n$ goes to $\infty$. We say {\em with high probability} or {\em whp} for short to mean with probability tending to 1 as $n \to \infty$. We use the standard asymptotic notation $o(1)$, $O(\cdot)$, and $\Omega(\cdot)$ with respect to $n$.  The shorthand $f \ll g$ will occasionally be used to mean $f = o(g)$.

Given a graph $G$ on vertex set $V$, recall the set of triangles of $G$ is denoted $\Tri(G) := \left\{T \in \binom{V}{3} : G[T] \cong K_3\right\}$. For $F \subseteq \binom{V}{3}$, let $G_F$ denote the graph on vertex set $V$ whose edge set is the union of edges covered by triples in $F$. A \textit{triangle configuration} on $V$ is a set $F \subseteq \binom{V}{3}$ such that $\Tri(G_F) = F$. 
We use the shorthand $e_F = e(G_F)$ and $\deg_F(v) = \deg_{G_F}(v)$.  The \emph{triangle-dual graph} of a graph $G$ has one vertex for each triangle and an edge whenever the corresponding triangles share an edge; we denote this $\dual{G}$. We say two triangles are {\em edge-adjacent} if they share exactly one edge (equivalently, they correspond to adjacent vertices in $\dual{G}$), and a set of triangles is {\em edge-connected} if the subgraph of $\dual{G}$ induced by those triangles is connected.

We define $\cF_k$ to be the set of all triangle configurations with $k$ triangles, and we similarly define  $\cF_{\leq k}$ to be the set of all triangle configurations with at most $k$ triangles.

Throughout the remainder of the paper, let $G \sim G(n,p)$ and $V = V(G)$ unless otherwise stated. The symbol $\Pro$ will always refer to the distribution over $G(n,p)$, unless a different distribution is specified, whereas the symbol $\nu$ will be used for distributions on triangle configurations. In particular, 
\begin{itemize}
    \item $\Pro = \Pro_p$ is the random graph distribution $G(n,p)$, 
    \item $\pi_{p,k}$ is the distribution of $G(n,p)$ conditioned on the event $\{X = k\}$, 
    \item $\nu_{p,k}$ is the distribution on triangle configurations induced by $\pi_{p,k}$,  
    \item $\nu_k^{\unif}$ is the uniform distribution on $\cF_k$, 
    \item $\nu_{p,k}^{\tilt}$ is the distribution on $\cF_k$ where each configuration $F$ is given weight proportional to $\nu_k^{\unif}p^{e_F}$.
\end{itemize}

\subsection*{Organization}
The remainder of the paper is organized as follows. In Section~\ref{section: atypical configs}, we identify a class of typical triangle configurations and show that $G(n,p)$ conditioned on having at most $k$ triangles has a typical triangle configuration with high probability. In Section~\ref{section: addit tri prob}, we estimate the probability that a fixed typical triangle configuration extends to a graph with no additional triangles and analyze this estimate after conditioning on the numbers of diamonds, piglets, and 3-books. In Section~\ref{section: overlapping edges}, we show that these component counts are asymptotically distributed as independent Poisson random variables and obtain the asymptotic count of typical configurations. In Section~\ref{section: final formulas}, we combine these estimates to prove Theorem~\ref{main theorem}. In Section~\ref{sec: sampling}, we study the conditional distribution of the triangles and the remaining edges, proving Theorems \ref{theorem: dist}, \ref{theorem: typical-char}, and \ref{theorem: sample}.

\subsubsection*{Note on AI use} The main ideas and arguments all came from the authors. AI (ChatGPT Plus) was used in checking proofs; it suggested simplifications to the formula~\eqref{eq:mainThrm}, the proof of Lemma~\ref{lemma:expectation-exchange}, and the proofs of \zcref[S]{theorem: typical-char}, Parts \ref{theorem: typical-char-part3} and \ref{theorem: typical-char-part4}, all of which were adopted. 

\section{Atypical Configurations}
\label{section: atypical configs}
Towards the proof of \zcref[S]{main theorem}, we first identify certain subgraphs that $G(n,p)$ conditioned on having $k$ triangles will avoid with high probability. We first observe that by the FKG inequality, conditioning on at most $k$ triangles cannot increase the probability that an unlikely subgraph appears in $G$.

\begin{lemma}
\label{lemma: FKG}
    Let $H$ be an unlabeled graph on at most $n$ vertices and let $N_H(G)$ be the number of labeled copies of $H$ in $G$. If $\Pro[N_H(G)>0]=o(1)$ then  $\Pro[N_H(G)>0\mid X(G)\leq k]=o(1)$ .
\end{lemma}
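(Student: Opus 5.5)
The plan is to apply the FKG (Harris) inequality on the product space $\{0,1\}^{\binom{V}{2}}$ with the product measure $\Pro_p$. The two events in play are
\[
A := \{N_H(G) > 0\}, \qquad B := \{X(G) \le k\}.
\]
Adding edges to $G$ can only create copies of $H$, and can only create triangles. So $A$ is an increasing event, while $B$ is a decreasing event. Harris's inequality then gives the negative correlation
\[
\Pro[A \cap B] \le \Pro[A]\,\Pro[B].
\]
Dividing by $\Pro[B]$ (assuming $\Pro[B] > 0$) yields
\[
\Pro[A \mid B] \le \Pro[A] = o(1).
\]

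The steps, in order, are as follows.

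First, verify monotonicity of the two events. Suppose $G \subseteq G'$ on the same vertex set. Any labeled copy of $H$ in $G$ is also a copy in $G'$, so $N_H(G) \le N_H(G')$. Likewise every triangle of $G$ is a triangle of $G'$, so $X(G) \le X(G')$.

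Second, invoke the Harris--FKG inequality for one increasing and one decreasing event under a product measure. Equivalently, apply the standard version for two increasing events to $A$ and $B^c$, which gives $\Pro[A \cap B^c] \ge \Pro[A]\Pro[B^c]$ and hence, by complementation, $\Pro[A \cap B] \le \Pro[A]\Pro[B]$.

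Third, check that the conditioning is well defined. Since $B \supseteq \{X = 0\}$ contains the empty graph, we have $\Pro[B] \ge (1-p)^{\binom n2} > 0$. The conclusion then follows directly from the hypothesis $\Pro[A] = o(1)$.

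There is no real obstacle here. The only point needing care is getting the direction of monotonicity right: the result requires $A$ to be increasing and $B$ decreasing so that they are negatively correlated. Notably, the bound is uniform in $k$ and does not even use Assumption~\ref{assumption1}.
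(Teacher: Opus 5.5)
Your proposal is correct and matches the paper's proof: both apply the FKG (Harris) inequality to the increasing event $\{N_H(G)>0\}$ and the decreasing event $\{X(G)\le k\}$ to get $\Pro[N_H(G)>0 \mid X(G)\le k]\le \Pro[N_H(G)>0]$. Your added check that $\Pro[X(G)\le k]>0$ is a small detail the paper leaves implicit. It uses $p<1$ and $k\ge 0$, both of which hold here.
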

\begin{proof}
    We will prove this using the FKG inequality. Having a copy of $H$ as a subgraph is an increasing property in a graph, while having at most $k$ triangles is a decreasing property. 
    This means by the FKG inequality we have that 
    \begin{align}
        \Pro[N_H(G)>0]&\geq\Pro[N_H(G)>0\mid X(G)\leq k]. \label{eq: conditioned prob}
    \end{align}
    Since $\Pro[N_H(G)>0]=o(1)$, the inequality above implies $\Pro[N_H(G)>0\mid X(G)\le k]=o(1)$.
\end{proof}

We now identify a precise set of subgraphs consisting of components of triangles that will be unlikely to appear in $G$. 
To that end, let $r=\left\lceil \frac{\log n}{6}\right\rceil$, and let the {\em triangle-intersection graph} of a configuration have one vertex for each triangle and an edge whenever the corresponding triangles share a vertex. We define $\mathcal A = \cA_n$ to be the set of isomorphism types of inclusion-minimal graphs having one of the following properties:
\begin{enumerate}
    \item  $K_4$, the clique on 4 vertices; \label{atypicaltype1}
    \item graphs induced by a triangle configuration consisting of exactly four triangles whose triangle-dual graph is connected; \label{atypicaltype2} 
    \item the graph consisting of $r$ triangles that have exactly one common vertex and are otherwise vertex-disjoint; \label{atypicaltype3}
    \item graphs induced by a triangle configuration whose triangle-intersection graph is the $4$-cycle $C_4$.
    \label{atypicaltype4} 
\end{enumerate}
Examples of each of the 4 types are depicted in Figure \ref{fig: forbidden graphs}.
$\mathcal{A}$ will constitute a set of atypical subgraphs that will whp not appear in $G$.

\begin{figure}[h]
\centering
\begin{tikzpicture}[line width=.8pt, line cap=round, line join=round]
  \coordinate (A) at (0,0);
  \coordinate (B) at (1,2*\h);
  \coordinate (C) at (2,0);
  \coordinate (O) at (1,\h);
  \draw (A)--(B)--(C)--cycle;
  \draw (A)--(O)--(B);
  \draw (O)--(C);

  \foreach \P in {A,B,C,O} {\vertex{\P}}
\end{tikzpicture}
\quad
\begin{tikzpicture}[line width=.8pt, line cap=round, line join=round]
  \coordinate (A) at (0,\h);
  \coordinate (B) at (2,\h);
  \coordinate (C) at (4,\h);
  \coordinate (D) at (1,0);
  \coordinate (E) at (3,0);
  \coordinate (F) at (3,2*\h);                      

  \draw (A)--(B)--(C);
  \draw (D)--(E);
  \draw (A)--(D)--(B)--(E)--(C)--(F)--(B);

  \foreach \P in {A,B,C,D,E,F} {\vertex{\P}}
\end{tikzpicture}
\quad
\begin{tikzpicture}[line width=.8pt, line cap=round, line join=round, baseline=(base)]
\coordinate (base) at (0,1.2);
  \coordinate (A) at (1,1.5);
  \coordinate (B) at (2,3.4);
  \coordinate (C) at (4.0,2.0);
  \coordinate (D) at (2,1.2);
  \coordinate (E) at (3,1.25);
\coordinate (F) at (4,3);
\coordinate (G) at (4,4);
\coordinate (H) at (0,3);
\coordinate (I) at (0,4);

  \draw (A)--(B)--(C)--(E);
  \draw (A)--(D)--(B);
  \draw (B)--(E);
  \draw (F)--(B)--(G)--(F);
    \draw (H)--(B)--(I)--(H);
\node at (2.5,1.2) {$\cdots$};
    \draw[decorate, decoration={brace, amplitude = 5pt}] (0, 4.3) -- (4, 4.3) node[midway, above=5pt]{$r$};
  \foreach \P in {A,B,C,D,E, F, G, H, I} {\vertex{\P}}
\end{tikzpicture}
\quad
\begin{tikzpicture}[line width=.8pt, line cap=round, line join=round]
  \coordinate (L) at (0,0);
  \coordinate (M) at (0,\h);
  \coordinate (N) at (0,2*\h);
  \coordinate (P) at (1,.5*\h);
  \coordinate (Q) at (1,1.5*\h);
  \coordinate (R) at (2,0);
  \coordinate (S) at (2, \h);
\coordinate (T) at (2, 2*\h);

  \draw (L)--(M)--(N)--(Q)-- (M);
  \draw (L)-- (P)-- (M);
\draw (R)-- (P)-- (S)--(R);
\draw (S)-- (T)-- (Q)--(S);

  \foreach \P in {L,M,N,P,Q,R,S, T} {\vertex{\P}}
  \end{tikzpicture}
  \caption{Examples of each of the 4 atypical subgraph categories in $\mathcal{A}$.}
  \label{fig: forbidden graphs}
\end{figure}
 
If a component of $\dual{G_F}$ has order 4 or more, then $G_F$ must contain a subgraph of Type \ref{atypicaltype1} or \ref{atypicaltype2}. Thus, for typical triangle configurations $F$, the components of the dual graph $\dual{G_F}$ have order at most 3. The possible components of order 2 and 3 are exactly a diamond, a piglet, and a 3-book, shown in Figure~\ref{fig: side sharing}, which correspond to $P_2, P_3$, and $C_3$ components in $\dual{G}$, respectively.

Given a triangle configuration $F$, let $Q = Q(F)$ be the number of diamonds, $R = R(F)$ be the number of piglets, and $S = S(F)$ be the number of 3-books. 
Recall $\cF_{\leq k}$ is the set of all configurations of at most $k$ triangles, and let $\cF_{\leq k, \typ}$ be the set of all configurations $F\in\cF_{\leq k} $ such that $N_H(G_F)=0$ for all $ H\in \mathcal{A}$. 

Our next lemma shows that whp, the graphs with at most $k$ triangles do not contain any of the atypical triangle configurations.
\begin{lemma}
\label{lemma: forbidden subgraphs}
    Let  $\cF_{\leq k, \typ}$ be as defined above. Then 
    \begin{align}
      \Pro[X(G)\leq k] =(1+o(1)) \Pro[\Tri(G)\in  \cF_{\leq k, \typ}].
    \end{align}
\end{lemma}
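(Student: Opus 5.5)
The idea is to show that, conditionally on $\{X(G)\le k\}$, whp no member of $\mathcal A$ appears as a subgraph of $G$ at all. This is more than we need, since $G_{\Tri(G)}\subseteq G$. First we check that the two events differ only by such an appearance. The event $\{\Tri(G)\in\cF_{\le k,\typ}\}$ is contained in $\{X(G)\le k\}$, because $\Tri(G)$ is always a triangle configuration. Conversely, if $X(G)\le k$ but $\Tri(G)\notin\cF_{\le k,\typ}$, then $G_{\Tri(G)}$, and hence $G$, contains a copy of some $H\in\mathcal A$. Therefore
\[
\Pro[X\le k]-\Pro[\Tri(G)\in\cF_{\le k,\typ}]\le \Pro[X\le k]\sum_{H\in\mathcal A}\Pro[N_H(G)>0\mid X\le k].
\]
By Lemma~\ref{lemma: FKG}, it then suffices to show that $\sum_{H\in\mathcal A}\Pro[N_H(G)>0]=o(1)$.

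The family $\mathcal A$ has boundedly many members, independently of $n$. Types~\ref{atypicaltype1}, \ref{atypicaltype2} and~\ref{atypicaltype4} are graphs on at most $12$ vertices, and type~\ref{atypicaltype3} is a single graph. So a union bound over types is harmless, and we bound each probability by the expected number of unlabeled copies, which is at most $n^{v(H)}p^{e(H)}$. Since $p=o(n^{-2/3})$, we have $np^{3/2}=o(1)$. Hence for any fixed $H$ with $e(H)\ge \tfrac32 v(H)$ we get $n^{v(H)}p^{e(H)}\le (np^{3/2})^{v(H)}=o(1)$. For $K_4$ this gives $n^4p^6=o(1)$ directly.

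For types~\ref{atypicaltype2} and~\ref{atypicaltype4}, I would prove $e(H)\ge\frac32 v(H)$ by building $H$ one triangle at a time and tracking the excess $e-\frac32 v$.
\begin{itemize}
\item The first triangle contributes $3-\tfrac92=-\tfrac32$.
\item A new triangle sharing exactly one edge with the current graph adds one vertex and two edges, a gain of $+\tfrac12$.
\item A new triangle meeting the current graph in exactly one vertex adds two vertices and three edges, a gain of $0$.
\item A new triangle meeting it in two vertices, without sharing the edge between them, adds one vertex and three edges, a gain of $+\tfrac32$.
\item A new triangle adding no new vertex must add at least one new edge, a gain of at least $+1$.
\end{itemize}
For type~\ref{atypicaltype2}, order the four triangles along a spanning tree of the connected dual graph. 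Each of the three later triangles shares an edge with an earlier one, so the total excess is at least $-\tfrac32+3\cdot\tfrac12=0$. For type~\ref{atypicaltype4}, order the triangles around the $4$-cycle $T_1T_2T_3T_4$. The steps for $T_2$ and $T_3$ each gain at least $0$, and $T_4$ meets both $T_3$ and $T_1$, which gives the closing gain of $+\tfrac32$. The degenerate cases, where extra coincidences of vertices or edges occur, only increase the gains, though this needs a short case check.

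Type~\ref{atypicaltype3} needs a separate argument, because $r\to\infty$ and $n^2p^3$ is only $o(1)$, not polynomially small. Here $v=2r+1$ and $e=3r$, and the automorphism group of this graph has order at least $r!$. So the expected number of copies is at most
\[
\frac{n(n^2p^3)^r}{r!}\le n\left(\frac{e}{r}\right)^r=\exp\bigl(\log n-r\log r+r\bigr)=o(1),
\]
since $r\log r\gg \log n$ when $r=\lceil \log n/6\rceil$. Summing the finitely many $o(1)$ bounds finishes the proof.

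I expect the main obstacle to be the bookkeeping in the degenerate cases of types~\ref{atypicaltype2} and~\ref{atypicaltype4}. Inclusion-minimality together with the greedy excess count should handle this uniformly, but it has to be checked that every coincidence strictly helps and never produces excess below $0$.
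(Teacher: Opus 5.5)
Your proposal is correct and follows essentially the paper's route: reduce via the FKG lemma and a union bound to first-moment bounds for each type in $\mathcal A$, using the $r!$ automorphism factor for type~3. Your excess bound $e(H)\ge\frac32 v(H)$ is a uniform repackaging of the paper's explicit $(v(H),e(H))$ lists for types~2 and~4; one small correction is that a later triangle can lie entirely inside the union of earlier ones (gain $0$, not $\ge 1$), but in a type~2 ordering this happens only for $K_4$, whose total excess is $0$ and which is type~1 anyway, so your conclusion stands.
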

\begin{proof}
The claim is equivalent to the statement 
\begin{align}
    \Pro\left[\Tri(G)\in  \cF_{\leq k, \typ} \mid X(G)\leq k\right]=1-o(1).
\end{align}
It is enough to show $\Pro[N_H>0]=o(1)$ for all $H \in \mathcal{A}$; since $\cA$ consists of a bounded number of isomorphism types, we may then apply \zcref[S]{lemma: FKG} and a union bound to conclude. 

For each $H \in \cA$, we will show that $\E_p[N_H]=o(1)$ and then apply Markov's inequality. The first type of subgraph in $\cA$ is $K_4$; the expected number of these is $O(n^4p^6)=o(1)$. 

The second type (\ref{atypicaltype2}) is four edge-connected triangles; let these be labeled $T_1,T_2,T_3, T_4$. We now consider how many edges and vertices these cover. Since the triangle-dual graph is connected, we may order the triangles so that for each $j\in\{2,3,4\}$, the triangle $T_j$ shares an edge with at least one of $T_1,\dots,T_{j-1}$. Thus, $T_1 \cup T_2$ forms a diamond with $4$ vertices and $5$ edges. 
Conditioned on these, if $T_3$ shares distinct edges with $T_1$ and $T_2$, then $H$ must form a $K_4$ which was discussed previously. Else, $T_3$ contains two edges and one vertex not covered by $T_1 \cup T_2$ and similarly $T_4$ contains either two edges and one vertex or one edge and no vertices not covered by the previous triangles. Thus the possible orders and sizes are $(v(H), e(H)) \in \{(5, 8), (6,9)\}$, and so the expected number of each in $G$ is $O(n^5p^8 + n^6p^9) = o(1)$.

The third type (\ref{atypicaltype3}) is a fixed vertex incident to $r = \left\lceil \frac{\log n}{6} \right\rceil$ edge-disjoint triangles; we compute the expected number of copies as at most
\begin{align}
    \frac{n^{2r+1}p^{3r}}{r!}&=o\left(\frac{n}{e^{(1/6 + o(1))\log n \log \log n}}\right)=o(1).
\end{align}
The fourth type (\ref{atypicaltype4}) is a 4-cycle in the triangle-intersection graph of vertex-connected triangles;
one can check that these have four possible structures, shown in Figure \ref{fig: 4 cycles}, or they contain a size 4 edge-connected component.
\begin{figure}[h]
\centering
\begin{tikzpicture}[line width=.8pt, line cap=round, line join=round]
  \coordinate (L) at (0,0);
  \coordinate (M) at (0,\h);
  \coordinate (N) at (0,2*\h);
  \coordinate (P) at (1,.5*\h);
  \coordinate (Q) at (1,1.5*\h);
  \coordinate (R) at (2,0);
  \coordinate (S) at (2, \h);
\coordinate (T) at (2, 2*\h);

  \draw (L)--(M)--(N)--(Q)-- (M);
  \draw (L)-- (P)-- (M);
\draw (R)-- (P)-- (S)--(R);
\draw (S)-- (T)-- (Q)--(S);

  \foreach \P in {L,M,N,P,Q,R,S, T} {\vertex{\P}}
  \end{tikzpicture}
  \begin{tikzpicture}[line width=.8pt, line cap=round, line join=round]
  \coordinate (L) at (0,0);
  \coordinate (M) at (0,\h);
  \coordinate (N) at (1,2*\h);
  \coordinate (P) at (1,.5*\h);
  \coordinate (Q) at (1,1.5*\h);
  \coordinate (R) at (2,0);
  \coordinate (S) at (2, \h);
\coordinate (T) at (1, 2*\h);

  \draw (L)--(M)--(N)--(Q)-- (M);
  \draw (L)-- (P)-- (M);
\draw (R)-- (P)-- (S)--(R);
\draw (S)-- (T)-- (Q)--(S);

  \foreach \P in {L,M,N,P,Q,R,S, T} {\vertex{\P}}
  \end{tikzpicture}
  \begin{tikzpicture}[line width=.8pt, line cap=round, line join=round]
  \coordinate (L) at (1,0);
  \coordinate (M) at (0,\h);
  \coordinate (N) at (1,2*\h);
  \coordinate (P) at (1,.5*\h);
  \coordinate (Q) at (1,1.5*\h);
  \coordinate (R) at (1,0);
  \coordinate (S) at (2, \h);
\coordinate (T) at (1, 2*\h);

  \draw (L)--(M)--(N)--(Q)-- (M);
  \draw (L)-- (P)-- (M);
\draw (R)-- (P)-- (S)--(R);
\draw (S)-- (T)-- (Q)--(S);

  \foreach \P in {L,M,N,P,Q,R,S, T} {\vertex{\P}}
  \end{tikzpicture}
\begin{tikzpicture}[line width=.8pt, line cap=round, line join=round]
  \coordinate (A) at (0,.5*\h);
  \coordinate (B) at (2,-\h);
  \coordinate (C) at (4,.5*\h);
  \coordinate (D) at (1.5,0);
  \coordinate (E) at (2.5,0);
  \coordinate (F) at (2, \h);

  \draw (A)--(B)--(C);
  \draw (D)--(E);
  \draw (A)--(D)--(B)--(E)--(C);
  \draw (A)--(F)--(C)--(A);

  \foreach \P in {A,B,C,D,E,F} {\vertex{\P}}
\end{tikzpicture}
  \caption{The relevant subgraphs that include an induced four cycle.}
  \label{fig: 4 cycles}
\end{figure}
The possible subgraphs satisfy $(v(H), e(H)) \in \{(8,12), (7,11), (6,10)\}$ corresponding to 0, 1, and 2 consecutive pairs sharing an edge. It follows that the expected number of such subgraphs is $o(1)$. 
\end{proof}

Using the previous result, we show that we can more broadly discard all graphs with high-degree vertices. We write $\deg_F(v)$ to refer to the degree of a vertex $v$ in the graph $G_F$.
\newpage
\begin{lemma}\label{lemma:bdd-deg}
   $\deg_F(v) \leq \log n$ for all $F\in \cF_{\leq k, \typ}$ and $v\in V(G_F)$. 
\end{lemma}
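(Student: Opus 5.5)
Fix $F \in \cF_{\leq k,\typ}$ and a vertex $v$. The plan is a purely deterministic argument: every edge of $G_F$ lies in some triangle of $F$, so a high degree at $v$ produces a Type~\ref{atypicaltype3} subgraph, which typicality excludes. Hence $\deg_F(v)$ is controlled by the structure of the triangles of $F$ containing $v$. Let $F_v \subseteq F$ be the set of triangles containing $v$. Each edge $vu$ of $G_F$ lies in some triangle of $F$, and that triangle contains $v$, so it belongs to $F_v$. Therefore
\begin{align}
  \deg_F(v) \le 2|F_v|,
\end{align}
and it suffices to show that $|F_v| < \tfrac{1}{2}\log n$.

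Next I use typicality to split $F_v$ into edge-connected components. Because $F$ contains no Type~\ref{atypicaltype1} or Type~\ref{atypicaltype2} subgraph, every edge-connected component of $F$ is elementary: a triangle, a diamond, a piglet, or a 3-book. In particular each component has at most $3$ triangles. Group the triangles of $F_v$ by their edge-connected component in $F$. Triangles of $F_v$ lying in different components share no edge. Two such triangles therefore meet only at $v$, since sharing a second vertex $u$ would mean sharing the edge $vu$.

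Now pick one triangle from each component that meets $F_v$. These triangles pairwise intersect exactly in $v$, so they are otherwise vertex-disjoint. If there were $r=\lceil \log n/6\rceil$ of them, their union would contain a subgraph of Type~\ref{atypicaltype3}. I should check that it contains some inclusion-minimal member of $\cA$; it does, since any graph with the property contains a minimal one. This contradicts $N_H(G_F)=0$ for all $H\in\cA$. So the number of such components is at most $r-1$. Each component contributes at most $3$ triangles to $F_v$, and in fact at most $4$ edges at $v$; the worst case is a 3-book through $v$, which has spine $vu$ plus three further edges. This gives
\begin{align}
  \deg_F(v) \le 4(r-1) \le 4\cdot\frac{\log n}{6} < \log n.
\end{align}
A crude bound of $6$ per component via $2|F_v|$ would only give $\log n$ up to rounding, which is why I count edges at $v$ per component directly.

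The one point needing care is the claim that two triangles through $v$ in different edge-components meet only in $v$, together with checking that the per-component count of edges at $v$ is at most $4$. That check is a direct inspection of the four elementary shapes: a triangle gives $2$, a diamond at most $3$, a piglet at most $4$ (its central vertex), and a 3-book at most $4$. Otherwise the argument is routine.
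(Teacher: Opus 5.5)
Your proof is correct and is essentially the paper's argument. The paper takes a maximal family of pairwise edge-disjoint triangles through $v$, which has at most $r-1$ members by the Type~3 exclusion, and uses that dual components have order at most $3$ to get at most $3(r-1)$ triangles through $v$, so $\deg_F(v)\le 6(r-1)$; you instead take one triangle per edge-component and count at most $4$ edges at $v$ per component, which is valid but unnecessary, since $r-1=\lceil \log n/6\rceil-1<\log n/6$ already gives $6(r-1)<\log n$ with no rounding issue.
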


\begin{proof}
    Given $v \in V(G_F)$, let $\mathcal M$ be a maximal family of edge-disjoint triangles containing $v$. By definition of Type \ref{atypicaltype3}, we must have $|\mathcal M| \leq r-1$. Moreover, every triangle containing $v$ must be either in $\mathcal M$ or edge-adjacent to a member of $\mathcal M$. Since components of the triangle-dual have order at most 3, each member of $\mathcal M$ is edge-adjacent to at most two other triangles. Thus, $v$ is contained in at most $3(r-1)$ triangles, and hence $\deg_F(v) \leq 6(r-1) \leq \log n$.
\end{proof}

\section{Estimating the Probability of No Additional Triangles}
\label{section: addit tri prob}

In this section, we decompose the probability that $G$ has $k$ triangles into two parts: the induced distribution on triangle configurations and  the distribution on the remaining edges, conditioned on an underlying set of triangles. 

We begin with the latter; for $F \in \cF_{\leq k}$ define
\begin{align}
\label{eq1}
    \cP_p(F) = \cP(F):= \Pro\left[X(G_F\cup G)=X(G_F)\right]\ .
\end{align}
In other words, $\cP(F)$ is defined to be the probability that $G \sim G(n,p)$ does not introduce any new triangles when sampled on top of $G_F$.
Therefore,
\begin{align}
\Pro[X \leq  k] &=\sum_{F \in \cF_{\leq k}} \Pro\left[\Tri(G)=F\right]\\
&=\sum_{F \in \cF_{\leq k}} p^{e_F} \cdot \cP(F)\\
&= (1+o(1)) \sum_{F \in \cF_{\leq k, \typ}} p^{e_F} \cdot \cP(F)
\label{eq:total-prob-sum}
\end{align}
where the last line follows from \zcref[S]{lemma: forbidden subgraphs}.

Letting $\cF_{k,\typ} := \cF_{k}\cap \cF_{\leq k, \typ}$, we approximate the probability of triangle configurations in $\cF_{k,\typ}$ in terms of the typical triangle component types described in the last section.
\begin{lemma}
\label{lemma: formula}
    Uniformly over $F\in \cF_{k, \typ}$, 
    \begin{align}
        \cP(F)=(1+o(1))\exp\bigg(&-\frac{n^3p^3}{6} +3kp+e_F(p-np^2+2n^2p^4)+pQ+2pR+3pS\\
        &+ \frac{n^4p^5}{4} -\frac{7n^5p^7}{12} +\left(-\frac{1}{2}p+np^3\right)\sum_{v\in V} \deg_F(v)^2 \bigg)\ .\label{eq:formula_M}
    \end{align}
\end{lemma}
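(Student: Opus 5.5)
We view $\cP(F)$ as the probability that a random set of pairs avoids every hyperedge of an auxiliary non-uniform hypergraph. Fix $F\in\cF_{k,\typ}$. Since $G_F$ is already present, only the pairs outside $E(G_F)$ are random, each included independently with probability $p$. For every triple $T\in\binom{V}{3}\setminus F$, let $A_T := \binom{T}{2}\setminus E(G_F)$. Then $A_T\neq\emptyset$, because $F$ is a configuration, so $\Tri(G_F)=F$. Let $\cH_F$ be the hypergraph on the pairs outside $E(G_F)$ with hyperedges $\{A_T\}$; its hyperedges have sizes $3$, $2$ or $1$. Thus $\cP(F)$ is the probability that the $p$-random subset of pairs contains no hyperedge of $\cH_F$. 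I would then apply the framework of Mousset, Noever, Panagiotou and Samotij \cite{BinSub}. It gives $\log\cP(F) = -\sum_{j\ge1}\kappa_j + o(1)$, where $\kappa_j$ is the (signed) cluster sum over connected clusters of $j$ hyperedges, provided the sum of cluster weights beyond a fixed order is $o(1)$. With $p=o(n^{-2/3})$ we need to keep $j\le 3$, as in \eqref{eqSW}.

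\textbf{First-order term.} First I would compute $\kappa_1=\sum_T p^{|A_T|}$ exactly up to $o(1)$ errors, classifying triples by how many edges of $G_F$ they contain.
\begin{itemize}
\item A triple has two $F$-edges precisely when it is a cherry of $G_F$ that is not a triangle of $F$. The number of cherries is $\sum_v\binom{\deg_F(v)}{2}$; each triangle of $F$ accounts for $3$ of them. The cherries in diamonds, piglets and 3-books that close up through another shared edge need separate bookkeeping. These are the source of the terms $3kp$, $pQ$, $2pR$, $3pS$ and $-\tfrac12 p\sum_v\deg_F(v)^2$ (the linear part $\tfrac12\sum_v\deg_F(v)\,p=e_Fp$ contributes $e_F p$).
\item Triples with exactly one $F$-edge number $e_F(n-2)$ minus the two-edge triples, up to lower-order corrections. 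This gives $-e_F np^2$, plus negligible terms.
\item The remaining $\binom n3 - \dots$ triples give $-\tfrac{n^3p^3}{6}$; the $O(e_Fn)$ corrections times $p^3$ are $O(n^4p^6)=o(1)$.
\end{itemize}

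\textbf{Second- and third-order terms.} Next comes $\kappa_2$, the sum over pairs of hyperedges sharing a random pair, weighted by $p^{|A_T\cup A_{T'}|}$.
\begin{itemize}
\item Two triples of size $3$ sharing an edge give $\tfrac{n^4p^5}{4}$, up to $o(1)$.
\item Pairs where one or both triples contain an $F$-edge give the corrections $2n^2p^4e_F$ and $np^3\sum_v\deg_F(v)^2$. For example, for a random pair $uv$ with $u$ incident to $F$-edges, the triples $uvw$ with $uw\in E(G_F)$ interact, which produces the factor $\deg_F(u)^2$.
\end{itemize}
The third-order clusters contribute $-\tfrac{7n^5p^7}{12}$. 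Every cluster of order $\le3$ that involves $F$-edges is $o(1)$ here; a typical bound is $e_F\cdot n^3p^6 = O(n^6p^9)=o(1)$, using $e_F\le 3k=O(n^3p^3)$.

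\textbf{Error terms and uniformity.} Uniformity over $F$ comes from three facts. First, $e_F\le 3k\le 3\mu$. Second, \zcref[S]{lemma:bdd-deg} gives $\deg_F(v)\le\log n$. Third, the absence of the atypical graphs in $\cA$ bounds the local structure, so every discarded term is bounded by $\mathrm{polylog}(n)$ times a negative power of $n$.

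\textbf{Main obstacle.} The hard step is verifying the hypotheses of \cite{BinSub} for the non-uniform hypergraph $\cH_F$ uniformly in $F$. Specifically, we must show that the total weight of clusters of order $\ge4$, including those using size-$1$ and size-$2$ hyperedges near high-degree vertices of $G_F$, is $o(1)$. I would first try bounding the maximum "codegree" weight $\max_{e}\sum_{T\ni e}p^{|A_T|-1}\le np^2+\deg_F\cdot p = o(1)$, using \zcref[S]{lemma:bdd-deg}. Then I would bound the number of clusters of order $j$ through the tree-like growth used in \cite{BinSub}, showing that the order-$\ge4$ contribution is dominated by $n^6p^9+\mathrm{polylog}(n)\,n^{-c}=o(1)$.
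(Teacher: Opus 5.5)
Your plan is the paper's proof in outline. Both apply \cite[Theorem 10]{BinSub} with $c=3$, check the codegree hypothesis via $\deg_F(v)\le\log n$, compute $\kappa_1,\kappa_2,\kappa_3$, and note that the error term is dominated by $n^6p^9$. Three small fixes are needed. You also need $\delta=\sum_i\E[X_i]^2=O(kp^2\log n+knp^4+n^3p^6)=o(1)$, which is immediate. Your remark that clusters ``of order $\le 3$'' meeting $G_F$ are negligible should say order $3$, since the order-$2$ ones are exactly the corrections $2e_Fn^2p^4$ and $np^3\sum_v\deg_F(v)^2$. In your codegree sum, keep only $|A_T|\ge 2$.

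The one genuine difference is how repeated singleton hyperedges are handled, and this changes where $pQ+2pR+3pS$ comes from. The paper indexes hyperedges by potential triangles, so the diagonal $ad$ of a diamond $\{abc,bcd\}$ is a singleton of multiplicity two. The number of two-covered triples is then exactly $\sum_v\binom{\deg_F(v)}{2}-3k$, so $\kappa_1$ has no $Q,R,S$ dependence. The term $p(Q+2R+3S)$ instead enters through $+\kappa_2$, from pairs of two-covered triangles sharing their uncovered pair (covariance $p-p^2$ each).

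Your $\cH_F$, with edge \emph{set} $\{A_T\}$, merges these duplicates. There are then $\sum_v\binom{\deg_F(v)}{2}-3k-(Q+2R+3S)$ distinct singletons, the terms come from $-\kappa_1$, and $\kappa_2$ has no such pairs; the two expansions agree up to $o(1)$. Your route has the advantage that the hypergraph is simple, so \cite{BinSub} applies as stated, whereas the paper has to appeal to an extension to bounded multiplicity.

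But you must be consistent: $\kappa_1$ is then $\sum_{A\in\cH_F}p^{|A|}$ over distinct edges, not $\sum_T p^{|A_T|}$ as you wrote. With the latter, your ``separate bookkeeping'' has nothing to correct, and your $\kappa_2$ list omits the $(2,2)$ pairs, so the $Q,R,S$ terms would be lost.

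Both versions rest on the same typicality input. A non-edge of $G_F$ with two common $G_F$-neighbours must be a diagonal of a diamond, piglet or 3-book (these have one, two and three such diagonals respectively), and no non-edge has three common $G_F$-neighbours.
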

Note that
\begin{align}
    e_F=3k-Q(F)-2R(F)-2S(F).
\end{align}

We will prove \zcref[S]{lemma: formula} using \cite[Theorem 10]{BinSub}. The setting of \cite{BinSub} is a hypergraph with edges of bounded order; Theorem 10 shows that under certain codegree conditions, the probability that a binomial random subset of the vertices contains any edges is asymptotically equal to a Janson-type expression given by the joint cumulants and joint moments over small sets of edge indicators.

In particular, \cite{BinSub} applies their result to the setting of triangles in $G(n,p)$ where a hypergraph is formed on the vertex set ${[n] \choose 2}$ with a 3-uniform hyperedge for each of the ${n \choose 3}$ possible triangles on $n$ vertices. Applying Theorem 10, they obtain a precise asymptotic expression for the probability of $G(n,p)$ being triangle-free.

In our case, we condition on a triangle configuration $F$, so we define our hypergraph $\cH$ on the set of pairs $uv \notin E(G_F)$. A (hyper)edge in $\cH$ consists of a set of 1, 2, or 3 potential edges in ${V \choose 2} - E(G_F)$ which form a new triangle when added to $G_F$. The hypergraph $\cH$ is no longer 3-uniform and in particular, we index the hyperedges by the potential triangles that they will form, which makes $\cH$ a multi-hypergraph. For configurations in $\cF_{k,\typ}$, however, the multiplicity of each edge is at most two: every set of two or three edges uniquely determines what triangle it completes, while every singleton edge is contained in at most two triangles by typicality.

We now introduce the codegree conditions required for \cite[Theorem 10]{BinSub}. Fix a configuration $F\in \cF_{k,\typ}$. Let $\Omega \subset V(\cH)$ be a set of order 1 or 2, and
define the {\em $j$th codegree of $\Omega$} to be 
\begin{align}
d_j^F(\Omega)
:=
\left|
\left\{
T:\;
\text{$T$ is a triangle in $K_n$, }\Omega\subseteq E(T) \setminus E(G_F),\ \text{and } |E(T)\setminus E(G_F)|=|\Omega| + j
\right\}
\right|.
\end{align}
Thus $d_j^F(\Omega)$ counts the number of hyperedges in $\cH$ consisting of $\Omega$ along with $j$ other vertices of $\cH$, or equivalently the number of potential triangles in $G$ containing $\Omega$ that are not already present in $F$ and that would be completed by adding exactly $j$ further edges. Here $j \in \{1, 2\}$.
We then set
\begin{align}
\cD_F(p):=\max_{j\in\{1,2\}}\ \max_{\emptyset\neq \Omega\subseteq {V \choose 2} \setminus E(G_F)} d_j^F(\Omega)p^j.
\end{align}
To define the joint cumulants and joint moments, given $F$, consider the dependency graph where two potential triangles are adjacent if their sets of missing edges intersect. A {\em triangle cluster} is a collection of potential triangles that is connected in the dependency graph.

    The {\em joint moment} of a set $A = \{Z_1, \dots, Z_n\}$ of random variables is
    $$\Delta(A):=\E[Z_1\dots Z_n]\ ,$$
   and the {\em joint cumulant} of $A$ is 
   $$\kappa(A) := \sum_{\pi\in \Pi (A)} (|\pi|-1)!(-1)^{|\pi|-1}\prod_{P\in \pi} \Delta(P)$$
   where $\Pi(A)$ is the set of all non-empty partitions of $A$. For example, $\kappa(Z) = \E[Z]$ and $\kappa(Z_1, Z_2) = \mathrm{Cov}(Z_1, Z_2)$.

Now let $\cC_i$ be the set of all triangle clusters of order $i$, and let $X_i$ be the indicator random variable for the $i$th hyperedge of $\cH$, $1 \leq i \leq {n \choose 3} - X(G_F)$. We define the following quantities.
    \begin{align}
        \kappa_i &:= \sum_{A  \in \cC_i} \kappa(\{X_j : j \in A\})\\
        \Delta_i &:= \sum_{A \in \cC_i} \Delta(\{X_j : j \in A\})\\
        \delta &:= \sum_i \E[X_i]^2\ .
    \end{align}

\newpage

\begin{theorem}[\cite{BinSub}, Theorem 10]
\label{theorem: BinSub}
If $p =o(1)$ and $\cD_F(p)=O(1)$ as $n \to \infty$, then for every fixed integer $c\in\mathds{N}$,
\begin{align}
\cP(F)
=
\exp\left(
-\kappa_1+\kappa_2-\kappa_3+\cdots+(-1)^c\kappa_c+O\!\left(\delta+\Delta_{c+1}\right)
\right).
\end{align}
\end{theorem}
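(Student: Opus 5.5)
The plan is to prove the identity by a truncated cumulant (cluster) expansion of $\log \cP(F)=\log \E\bigl[\prod_i (1-X_i)\bigr]$, where the product runs over all hyperedges of $\cH$. Since each $X_i$ is an indicator and $p=o(1)$, the formal expansion
\begin{align}
\log \E\Bigl[\prod_i (1-X_i)\Bigr]=\sum_{m\ge 1}\frac{(-1)^m}{m!}\sum_{i_1,\dots,i_m}\kappa(X_{i_1},\dots,X_{i_m})
\end{align}
is the starting point. Joint cumulants vanish unless the index set is connected in the dependency graph, so only triangle clusters survive. I would split the sum in two. The first part is the terms with distinct indices; grouped by the underlying cluster, these give exactly $\sum_{j}(-1)^j\kappa_j$. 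The second part is the terms with repeated indices; using $X_i^2=X_i$, these reduce to lower-order cumulants multiplied by products of expectations, and I expect to bound them by $O(\delta)=O(\sum_i \E[X_i]^2)$.

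To make this rigorous without assuming convergence of the infinite series, I would follow the sequential-conditioning route instead. Order the hyperedges and write
\begin{align}
\cP(F)=\prod_i \Pro\bigl[X_i=0 \mid X_j=0 \ \forall j<i\bigr].
\end{align}
Then take logarithms, $\log(1-x)=-x+O(x^2)$, where the $O(x^2)$ errors sum to $O(\delta)$. Each conditional probability $\Pro[X_i=1\mid \cdots]$ is then expanded by repeatedly applying the Harris/FKG-type identity $\E[X_i\mid \bar A]=\E[X_i]-\Cov(X_i,\mathbb 1_A)/\Pro[\bar A]$. Here $A$ is the event that some earlier hyperedge sharing a vertex of $\cH$ with $X_i$ is present. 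Iterating this $c$ times produces alternating sums over clusters rooted at $X_i$. Summed over $i$, these reassemble into $-\kappa_1+\kappa_2-\cdots$ up to order $c$. The unexpanded remainder at depth $c+1$ is a sum of joint moments over clusters of order $c+1$, that is, $O(\Delta_{c+1})$.

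The codegree hypothesis $\cD_F(p)=O(1)$ is used at each step of the iteration to control two things. The first is the conditional probability of the conditioning event. The second is that the number of ways to extend a cluster by one hyperedge, weighted by $p^{\text{new vertices}}$, is $O(1)$ times the cost of attaching at a given $\Omega$. This ensures the denominators $\Pro[\bar A]$ are $1-o(1)$, or at least bounded below, and that the correction terms multiply the moment sums only by constant factors depending on $c$.

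The main obstacle is this remainder bound. One must show that all error terms, including the non-distinct-index terms, cross terms from the denominators, and the difference between conditional and unconditional cumulants, are dominated by $\delta+\Delta_{c+1}$ uniformly. This has to hold even though $\kappa_1$ itself may be of order $n^3p^3\to\infty$. I would handle it by an induction on $c$ that tracks, for each root hyperedge, a weighted sum over rooted clusters, using the codegree bound to close the induction.
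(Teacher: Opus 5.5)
The paper does not prove this statement. It imports it from \cite{BinSub} and only argues that hyperedge multiplicity at most $2$ changes constants. Your proposal therefore has to stand as a proof of the cited theorem, and its decisive step fails as written.

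In the sequential-conditioning route you replace $\Pro[X_i=1\mid X_j=0\ \forall j<i]$ by $\E[X_i\mid \bar A]$, where $A$ involves only earlier hyperedges \emph{adjacent} to $i$. You also list ``the difference between conditional and unconditional cumulants'' among the error terms. That difference is not an error term. Set $\psi(K):=\sum_{J\subseteq K}(-1)^{|K\setminus J|}\log\Pro[X_j=0\ \forall j\in J]$. Then $\log\cP(F)=\sum_K\psi(K)$ over connected $K$, and $\psi(K)=(-1)^{|K|}\kappa(\{X_j:j\in K\})+\cdots$.

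Already for a path $l\sim j\sim i$ with $l\not\sim i$ and $l<j<i$, your scheme computes $\Pro[X_i=0\mid X_j=0]$ instead of $\Pro[X_i=0\mid X_j=X_l=0]$. The logarithm of their ratio is exactly $\psi(\{l,j,i\})=-\kappa(X_l,X_j,X_i)+\cdots$. So conditioning on non-adjacent earlier hyperedges is precisely where the path-shaped clusters in $\kappa_3,\kappa_4,\dots$ come from. Dropping it gives an order-dependent answer. For the triangle hypergraph it discards every piglet cluster whose middle triangle is not last of the three. That is a positive fraction of the $\frac12 n^5p^7$ piglet part of $\kappa_3$, far larger than the allowed error $\delta+\Delta_4$. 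The effect is harmless only for $c\le 2$, Janson's regime, and $c=3$ is exactly what Lemma~\ref{lemma: formula} needs.

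If you instead keep the conditioning, every covariance and denominator in your iteration lives in a conditioned, non-product measure. Nothing in the proposal turns those rooted, order-dependent conditional quantities into the unconditional, symmetric cumulants, or bounds what is left by $O(\Delta_{c+1})$. The sentences ``these reassemble into $-\kappa_1+\kappa_2-\cdots$'' and ``an induction on $c$'' state the content of the theorem rather than prove it. (Your first step is fine: $\Pro[X_i=1\mid\cdots]\le\E X_i$ by Harris, so the quadratic terms are $O(\delta)$.)

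The motivating expansion is also wrong, not merely non-convergent. The series $\sum_m\frac{(-1)^m}{m!}\sum_{i_1,\dots,i_m}\kappa(X_{i_1},\dots,X_{i_m})$ is the cumulant expansion of $\log\E[e^{-\sum_i X_i}]$, and $e^{-X_i}=1-(1-e^{-1})X_i\neq 1-X_i$. For a single indicator of mean $q$ it sums to $\log(1-(1-e^{-1})q)$, not $\log(1-q)$. The repeated-index terms are therefore first order, not $O(\delta)$: the $m=2$ diagonal alone contributes $\frac12\sum_i\Var(X_i)\approx\frac12\kappa_1$.

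The correct finite starting point is the M\"obius identity above. There the coefficient of $\prod_{j\in K}z_j$ in $\log\E\prod_{j\in K}(1+z_jX_j)$ is exactly $\kappa(\{X_j:j\in K\})$. A proof along your lines must supply two things, using $\cD_F(p)=O(1)$:
\begin{enumerate}
\item a quantitative bound $\sum_{|K|>c}\psi(K)=O(\delta+\Delta_{c+1})$;
\item $O(\delta)$ control of the non-multilinear parts of $\psi(K)$ for $|K|\le c$.
\end{enumerate}
That is the missing idea.
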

Although \cite[Theorem 10]{BinSub} is stated for simple hypergraphs, the result extends to multi-hypergraphs with bounded hyperedge multiplicity. Indeed, repeated edges may be treated as having distinct indices throughout, and in this setting the proof of the main probabilistic estimates in \cite{BinSub} remains unchanged while the combinatorial estimates used to prove Theorem 10 require minor modifications in the form of additional constant factors depending on the maximum hyperedge multiplicity. Since the hyperedges of $\cH$ have multiplicity at most 2, we may therefore apply the theorem to $\cH$ in order to prove \zcref[S]{lemma: formula}.

\begin{proof}[Proof of {\zcref[S]{lemma: formula}}]
We first verify the codegree hypothesis of \zcref[S]{theorem: BinSub}. If $j=2$ and $d_2^F(\Omega)\neq 0$, then $|\Omega|=1$, and 
a fixed edge is contained in at most $n-2$ triangles. Hence,
\begin{align}
d_2^F(\Omega)p^2\le np^2=o(1).
\end{align}
If $j=1$ and $|\Omega|=2$, then $\Omega$ determines at most one triangle, so
\begin{align}
d_1^F(\Omega)p\le p=o(1).
\end{align}
Finally, if $j=1$ and $\Omega=\{uv\}$, every triangle counted by $d_1^F(\Omega)$ contains an edge of $G_F$ incident to either $u$ or $v$. Since $F\in\cF_{k,\typ}$, \zcref[S]{lemma:bdd-deg} says that every vertex of $F$ has degree at most $\log n$. Therefore
\begin{align}
d_1^F(\Omega)\le \deg_F(u)+\deg_F(v)\le 2\log n,
\end{align}
and so
\begin{align}
d_1^F(\Omega)p\le 2p\log n=o(1).
\end{align}
Thus, $\cD_F(p)=o(1)$ so the hypothesis of \zcref[S]{theorem: BinSub} is satisfied. We now compute the $\kappa_i$ terms for $i = 1, 2, 3$. Let
$$D(F) := \sum_{v \in V} \deg_F(v)^2$$
and note that \zcref[S]{lemma:bdd-deg} implies 
\begin{equation}\label{eq:square-deg-bd}
    D(F) \leq \max_{v \in V} \deg_F(v) \sum_{v \in V} \deg_F(v) \leq 2e_F \log n \leq 6k\log n\ .
\end{equation}

\noindent{\bf Computing $\kappa_1$:}   
    Recall that $\kappa_1$ is the expected number of triangles appearing in $G$ but not in $F$. Let $N_i$ be the number of triangles with exactly $i$ edges in $G_F$, where $i \in \{1, 2, 3\}$. 
Since $\deg_F(v) > 0$ implies $\deg_F(v) \geq 2$ for all $v \in V(G)$, each such vertex is contained in ${\deg_F(v) \choose 2}$ distinct triples with at least two edges covered by $G_F$, whereas the triples containing three edges of $G_F$ are exactly the $k$ triangles of $F$ by assumption. Therefore,
\begin{align}
    N_2 &= \sum_{v \in V} {\deg_F(v) \choose 2} - 3k = \frac12 D(F) - 3k - e_F, \\
    N_1 &= e_F(n-2) -2N_2 - 3k = e_Fn+3k-D(F).
\end{align}
The probability of a triangle with $i$ covered edges is $p^{3-i}$, so we have
\begin{align}
    \kappa_1 &= \left({n \choose 3}-k\right)p^3 + N_2(p-p^3) + N_1(p^2-p^3) \\
    &= \frac{n^3p^3}{6} -3kp - e_Fp + e_Fnp^2 + \frac{p}{2}D(F)+ o(1)\ .
\end{align}
Indeed, by \eqref{eq:square-deg-bd} and the assumptions $k = O(n^3p^3)$ and $p = o(n^{-2/3})$, the discarded terms are bounded by $O(n^2p^3 + kp^2\log n + knp^3) = o(1)$.

\medskip

\noindent{\bf Computing $\kappa_2$:}
Say $T_i$ and $T_j$ are potential triangles. Then $\kappa(\{X_i, X_j\}) = 0$ unless they share an edge outside $G_F$. Thus it suffices to consider triangle pairs of the form $T_i = \{u,v,x\}$ and $T_j = \{u,v,y\}$ where $x \neq y$ and $uv \notin G_F$. Recall that $\kappa_2$ is a sum over unordered pairs.

Let $a_i = |E(T_i) \cap E(G_F)|$ and $a_j = |E(T_j) \cap E(G_F)|$.
Then 
$$\mathrm{Cov}(X_i, X_j) = p^{5-(a_i+a_j)} - p^{6-(a_i+a_j)} = (1-p)p^{5-(a_i+a_j)}\ .$$ 
The possible cases and their total contributions to $\kappa_2$ are as follows.
$$
\begin{array}{c|c|c}
(a_i,a_j)
&\text{additional condition}
&\text{contribution to }\kappa_2\\ \hline
(2,2)&& p(Q+2R+3S)+o(1)\\
(2,1)&&O(kp^2\log^2n)=o(1)\\
(1,1)&\text{the two covered edges are disjoint}&O(k^2p^3)=o(1)\\
(1,1)&\text{the two covered edges are adjacent}&\frac12np^3D(F)+o(1)\\
(2,0)&&\frac12np^3D(F)+o(1)\\
(1,0)&&2e_Fn^2p^4+o(1)\\
(0,0)&&\frac14n^4p^5+o(1).
\end{array}
$$

In the case $(a_i, a_j) = (2,2)$, typicality implies that the four covered edges extend to a diamond in $F$, and each diamond, piglet, and 3-book gives 1, 2, and 3 such pairs of triangles, respectively.
This gives a contribution of $(p-p^2)(Q + 2R + 3S) = p(Q + 2R + 3S) + o(1)$.

For $(a_i, a_j) = (2,1)$, the number of triangle pairs is at most $\sum_{uv \notin E(G_F)} (\deg_F(u) + \deg_F(v))(|\partial_F(u) \cap \partial_F(v)|) = O(k \log^2 n)$ (where $\partial_F(x)$ denotes the neighborhood of $x$ in $G_F$) while for $(a_i, a_j) = (1,1)$ with disjoint edges covered, there are $O(e_F^2) = O(k^2)$ possibilities. These give the two negligible contributions to $\kappa_2$. 

For the remaining cases, the counts are obtained by first choosing the covered edges as shown in Figure~\ref{fig: kappa_2} and then choosing the remaining vertices. Using \eqref{eq:square-deg-bd}, we get
\begin{align*}
    N_{(1,1),\mathrm{adj}} &= n\sum_{v \in V} {\deg_F(v) \choose 2} + o(p^{-3}) \\
    N_{(2,0)} &= n\sum_{v \in V} {\deg_F(v) \choose 2} + o(p^{-3}) \\
    N_{(1,0)} &= 2e_Fn^2 + o(p^{-4}) \\
    N_{(0,0)} &= \frac14 n^4 + o(p^{-5})
\end{align*}
The error terms arise from excluding choices in the $(O(\log n))$-sized neighborhoods of already chosen vertices or choices containing additional edges of $G_F$, using $e_F = O(k)$, $\deg_F(v) \leq \log n$, and $k = O(n^3p^3)$.

\begin{figure}[h]
\centering
\begin{tikzpicture}[scale=0.9]
    \coordinate[label=below:$u$] (u) at (2,0);
    \coordinate[label=above:$v$] (v) at (3,2);
    \coordinate[label=below:$y$] (y) at (4,0);
    \coordinate[label=above:$x$] (x) at (1,2);
    \foreach \P in {u, v, x, y} {\vertex{\P}}
    \draw[line width=.75mm, black] (2,0) -- (1,2);
    \draw[] (1,2) -- (2,0) -- (3,2) -- cycle;
    \draw[] (2,0) -- (3,2) -- (4,0) -- cycle;
    \draw[line width=.75mm, black] (2,0) -- (4,0);
\end{tikzpicture}
\begin{tikzpicture}[scale=0.9]
    \coordinate[label=below:$u$] (u) at (2,0);
    \coordinate[label=above:$v$] (v) at (3,2);
    \coordinate[label=below:$y$] (y) at (4,0);
    \coordinate[label=above:$x$] (x) at (1,2);
    \foreach \P in {u, v, x, y} {\vertex{\P}}
    \draw[line width=.75mm, black] (2,0) -- (1,2);
    \draw[] (1,2) -- (2,0) -- (3,2) -- cycle;
    \draw[] (2,0) -- (3,2) -- (4,0) -- cycle;
    \draw[line width=.75mm, black] (1,2) -- (3,2);
\end{tikzpicture}
\begin{tikzpicture}[scale=0.9]
    \coordinate[label=below:$u$] (u) at (2,0);
    \coordinate[label=above:$v$] (v) at (3,2);
    \coordinate[label=below:$y$] (y) at (4,0);
    \coordinate[label=above:$x$] (x) at (1,2);
    \foreach \P in {u, v, x, y} {\vertex{\P}}
    \draw[line width=.75mm, black] (2,0) -- (1,2);
    \draw[] (1,2) -- (2,0) -- (3,2) -- cycle;
    \draw[] (2,0) -- (3,2) -- (4,0) -- cycle;
\end{tikzpicture}
\begin{tikzpicture}[scale=0.9]
    \coordinate[label=below:$u$] (u) at (2,0);
    \coordinate[label=above:$v$] (v) at (3,2);
    \coordinate[label=below:$y$] (y) at (4,0);
    \coordinate[label=above:$x$] (x) at (1,2);
    \foreach \P in {u, v, x, y} {\vertex{\P}}
    \draw[] (1,2) -- (2,0) -- (3,2) -- cycle;
    \draw[] (2,0) -- (3,2) -- (4,0) -- cycle;
\end{tikzpicture}
  \caption{The cluster types with nonnegligible cumulant contribution; bold edges are those contained in $G_F$.}
  \label{fig: kappa_2}
\end{figure}

By noting that $\sum_{v \in V} {\deg_F(v) \choose 2} = \frac12 D(F) - e_F$, we have
\begin{align}
    \kappa_2 = nD(F)p^3+2e_Fn^2p^4+\frac{n^4p^5}{4}+pQ+2pR+3pS+o(1).
\end{align}

\noindent{\bf Computing $\kappa_3$:} We now consider connected clusters of 3 potential triangles $T_h, T_i$, and $T_j$.
Notice that $\kappa(X_h, X_i, X_j) = 0$ unless $T_h, T_i, T_j$ form a piglet, a 3-book, or a $K_4$. We first show that the total contribution to $\kappa_3$ from clusters containing at least one edge of $G_F$ is $o(1)$. It will be convenient to bound the joint moment instead of the joint cumulant, as in our case $|\kappa(X_h, X_i, X_j)| \leq 6\E[X_hX_iX_j]$.

Suppose first that the triangles form a piglet or a 3-book $H$. Let $J := E(G_F) \cap E(H)$. Observe that the two shared edges of a piglet or the one shared edge of a 3-book must lie outside $G_F$; consequently, the possible covered edges in a piglet form a subgraph of $C_5$ and in a 3-book form subgraph of $K_{2,3}$. The case $J = K_{2,3}$ is excluded by the assumption that $F$ is typical. 

If a connected graph on $s$ vertices is required to be a subgraph of $G_F$, then there are $O(k (\log n)^{s-2})$ possible embeddings: choose one of its edges in $O(k)$ ways and expose the remaining vertices along a spanning tree, recalling that the degrees are bounded by $\log n$. For disconnected $J$, we apply the same embedding bound to each component. The resulting contributions to $\kappa_3$ are given by the following.
$$
\begin{array}{c|c}
e(J) & \text{total joint-moment contribution}\\ \hline
1 & O(kn^3p^6)\\
2 & O(k(\log n)n^2p^5+k^2np^5)\\
3 & O(k(\log^2n)np^4+k^2(\log n)p^4)\\
4 & O(k(\log^3n)p^3+knp^3)\\
5 & O(k(\log^3n)p^2).
\end{array}
$$

By Assumption~\ref{assumption1}, $k = O(n^3p^3)$ and $p = o(n^{-2/3})$, so every entry in the table is $o(1)$. 

For the $K_4$ case, at most four of the six edges can be covered by $G_F$. As before, we consider the different cases of $J = E(K_4) \cap E(G_F)$ based on $e(J)$ and the number of connected components; applying the previous embedding bound gives a total contribution of
$$O\left(kn^2p^5+k(\log n)np^4+k^2p^4+k(\log^2n)p^3+knp^3+k(\log^2n)p^2\right)
    =o(1)\ .$$
Hence the total joint cumulant of all 3-clusters that intersect $G_F$ is $o(1)$.

It remains to consider the clusters which are edge-disjoint from $G_F$. 
The total number of piglets on $n$ vertices is $60{n\choose 5}=(1+o(1)) \frac{n^5}{2}$, and the total number of 3-books on $n$ vertices is $10{n\choose 5}=(1+o(1)) \frac{n^5}{12}$. The number of either type with at least one edge covered by $G_F$ is $O(kn^3) = o(n^5)$.
The joint moment for all three triangles is $p^7$ whereas every other term in the cumulant is $O(p^8)$. The remaining $K_4$ clusters contribute $O(n^4p^6) = o(1)$.  

Putting everything together gives us
\begin{align}
    \kappa_3=(1+o(1))\left(\frac12 + \frac{1}{12}\right)n^5p^7 = \frac{7}{12}n^5p^7+o(1).
\end{align}

\noindent{\bf Bounding the error:} We first bound $\delta$. From the computations for $\kappa_1$, we know the number of potential triangles missing 1, 2, or 3 edges is $O(k \log n), O(kn)$, and $O(n^3)$, respectively. Thus,
$$\delta = O(k \log n p^2 + knp^4 + n^3p^6) = o(1)\ .$$
Next, we show $\Delta_4 = o(1)$. Observe that a connected 4-cluster meeting $G_F$ must contain a connected 3-cluster, say $C$, that also meets $G_F$. For a fixed $C$, consider the number of ways to extend $C$ by a fourth hyperedge $T$ of $\cH$. There are $O(1)$ choices for $T$ which are contained in the union of the hyperedges of $C$, as $\cH$ has bounded hyperedge multiplicity. Else, $T$ extends $C$ by $j \in \{1, 2\}$ new vertices of $\cH$, and the weighted contribution of such $T$ is at most $d_j^F(\Omega)p^j \leq \cD_F(p) = o(1)$ where $\Omega = T \cap (\bigcup C)$. As there are again a constant number of choices for the vertices of $\Omega$, this along with the previous case bounding the total joint moment of 3-clusters intersecting $G_F$ shows that the total contribution of 4-clusters meeting $G_F$ is $o(1)$. 

Now consider a 4-cluster that is edge-disjoint from $G_F$. As previously computed, a connected 3-subcluster has total joint moment $O(n^5p^7 + n^4p^6)$ where the first term comes from piglets and 3-books, and the second term comes from $K_4$'s. If the fourth triangle is not contained in the union of the 3-subcluster, then it adds either one or two new edges, so the weighted number of extensions of the 3-subcluster is $O(p+np^2)$. Else, it must be the case that 4-cluster is a $K_4$, which contributes $O(n^4p^6)$ to the joint moment. Therefore,
$$\Delta_4 = O(n^5p^7 + n^4p^6)O(p+np^2) + O(n^4p^6) + o(1) = o(1)\ .$$

Applying \zcref[S]{theorem: BinSub} for $c = 3$ gives the claim.
\end{proof}

\subsection{Conditioning on fixed component counts}\label{subsec:fixed-component-counts}

For integers $k$, $Q$, $R$, $S$ with $2Q+3R+3S\le k$, let $\cF_{k,Q,R,S} \subseteq \cF_{k, \elem}$ be the set of triangle configurations on $V$ consisting of exactly $Q$ diamonds, $R$ piglets, $S$ 3-books and $t = k-2Q-3R-3S$ isolated triangles, and let $\cF_{k,Q,R,S,\typ} = \cF_{k,Q,R,S} \cap \cF_{k,\typ}$. Let $\nufix$ denote the uniform distribution on $\cF_{k,Q,R,S,\typ}$. 

\medskip

For the remainder of this subsection, fix $k$, fix $Q, R, S$ such that $2Q + 3R + 3S \leq k$, and let $\nu = \nufix$. We will analyze $\nu$ by comparing it to the following independent embedding model.

\medskip

Let $m= t+Q+R+S= k-Q-2R-2S$ denote the total number of components in the triangle-dual graph. Let $H_1,\ldots,H_m$ be an ordered list of the $m$ components with types specified by $Q,R,S,t$. Give every $H_i$ a fixed labeling of its vertices, and independently for each $i$ choose a uniformly random injection $\phi_i:V(H_i)\hookrightarrow[n]$.
Let
\begin{equation}
    \widetilde\nu=\widetilde\nu_{k,Q,R,S}^{\ind}
\end{equation}
denote the resulting distribution on the tuple $\boldsymbol\phi=(\phi_1,\ldots,\phi_m)$. Let $\tE = \E_{\tnu}$ denote expectation in this model.

Let $I_i=\phi_i(V(H_i))$ and let $\mathcal T(\boldsymbol\phi)$ be the set of images of all the
constituent triangles of $H_1,\ldots,H_m$. Define
\begin{align}
\cE := \left\{\mathcal T(\boldsymbol\phi)\in\cF_{k,Q,R,S,\typ}\right\}.
\label{eq:def-E}
\end{align}
Thus $\cE$ is the event that the independently embedded components form a valid triangle configuration with the prescribed component counts (meaning, the embeddings of different components do not share edges, and the embeddings do not create additional triangles) and that this configuration is typical. We first show that this event holds whp.

\begin{lemma}\label{lemma: indept-conditional}
Uniformly over $(Q, R, S)$ such that $2Q + 3R + 3S \leq k$, 
    $$\tnu(\cE) = 1-o(1)\ .$$
Moreover, the distribution of $\mathcal T(\boldsymbol\phi)$ conditioned on $\cE$ is distributed exactly according to $\nu$. 
\end{lemma}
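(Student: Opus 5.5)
We prove the second assertion first, since it is a symmetry argument. Fix $F\in\cF_{k,Q,R,S,\typ}$. The tuples $\boldsymbol\phi$ with $\mathcal T(\boldsymbol\phi)=F$ are obtained as follows. First assign the $m$ components of $F$ (an edge-connected component of a typical configuration has a well-defined type) bijectively to the slots $H_1,\dots,H_m$ so that types match. Then, for each $i$, choose a labeled embedding of $H_i$ onto the assigned component. The number of type-respecting assignments is $t!\,Q!\,R!\,S!$. The number of embeddings of each component equals $|\mathrm{Aut}(H_i)|$, which depends only on the type. Hence the number of preimages of $F$ is a constant $c(k,Q,R,S)$ independent of $F$. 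Every $\boldsymbol\phi$ has the same probability $\prod_i (n)_{v(H_i)}^{-1}$. It follows that $\tnu(\mathcal T(\boldsymbol\phi)=F)$ is the same for all $F\in\cF_{k,Q,R,S,\typ}$, so conditioning on $\cE$ gives exactly the uniform measure $\nu$. One point needs checking: when $\cE$ holds, the components of $\mathcal T(\boldsymbol\phi)$ are exactly the images $\phi_i(H_i)$. This holds because $\cE$ forbids shared edges and extra triangles, so no two images merge into a larger edge-connected component.

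For $\tnu(\cE)=1-o(1)$, we bound by a union bound the probability of each way $\cE$ can fail. There are four cases.

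\begin{enumerate}
\item Two images $\phi_i(H_i)$ and $\phi_j(H_j)$ share an edge. For a fixed pair this has probability $O(n^{-2})$, so over the $O(m^2)=O(k^2)$ pairs the total is $O(k^2n^{-2})=O(n^4p^6)=o(1)$.
\item The union creates a triangle not in $\mathcal T$. Such a triangle uses edges from at least two distinct images. If it uses edges from three distinct images, we need three vertex coincidences among three components, giving probability $O(k^3n^{-3})$. If it uses two edges from one image and one edge from another, the two-edge part is a path $xyz$ in one image, and the other image must contain the edge $xz$; this has probability $O(k^2n^{-2})$. Both bounds are $o(1)$, since $k=O(n^3p^3)$ and $p=o(n^{-2/3})$ give $k=o(n)$.
\item A subgraph from $\cA$ appears. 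Its four types are handled separately.
\begin{itemize}
\item Types \ref{atypicaltype1} and \ref{atypicaltype2} require either an extra triangle or an edge-sharing between images, which are already covered above.
\item Type \ref{atypicaltype3} requires about $r/3$ distinct components to pass through a single vertex. The probability is at most $n\binom{m}{r/3}(5/n)^{r/3}\le n(5e k/(n\,r/3))^{r/3}$. Because $k/n=o(1)$ and $r\asymp\log n$, this is $n\cdot o(1)^{\Theta(\log n)}=o(1)$.
\item Type \ref{atypicaltype4} requires a cycle of vertex intersections among images. A cycle through $j\ge2$ distinct components needs $j$ coincidences, so the probability is $O(k^j n^{-j})=o(1)$ per cycle type. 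A cycle inside a single component does not occur for elementary components. The remaining case is one component meeting another in two vertices, which has probability $O(k^2n^{-2})$.
\end{itemize}
\end{enumerate}

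The main obstacle is to enumerate the failure modes carefully, especially the type-\ref{atypicaltype4} configurations, since their triangles may lie partly inside a single component. The guiding principle is that any such configuration forces at least as many vertex coincidences between distinct components as it uses distinct components (or at least two coincidences in total), and each coincidence costs a factor $O(k/n)=o(1)$. All bounds depend only on $k$ and the fixed maximal component size, so they are uniform in $(Q,R,S)$.
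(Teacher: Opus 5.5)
Your proposal is correct and follows essentially the same route as the paper. For uniformity, you use the same preimage count $t!\,Q!\,R!\,S!\prod_i|\mathrm{Aut}(H_i)|$, and you control the failure modes of $\cE$ with the same vertex-coincidence bounds $O(k^2/n^2)$, $O(k^3/n^3)$, $O(k^4/n^4)$ plus the binomial tail bound for a vertex lying in $\lceil r/3\rceil$ components. The only difference is organizational: the paper packages these estimates into bad events $\cB_2,\dots,\cB_5$ and proves the deterministic inclusion $\mathcal G\subseteq\cE$, whereas you union-bound over the failure modes of $\cE$ directly.
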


\begin{proof}
Recall that $r = \lceil\frac{\log n}{6}\rceil$ and set $L = \lceil\frac{r}{3}\rceil$.
Consider the following ``bad'' events:
\begin{align*}
    \cB_2 &:= \{\text{there exist } i \neq j \text{ such that } |I_i \cap I_j| \geq 2\}, \\
    \cB_q &:= \{\text{there exist distinct } i_1, \cdots, i_q \text{ and distinct } v_1, \cdots, v_q \text{ such that } v_j \in I_{i_j} \cap I_{i_{j+1}} \forall j \in [q]\}\\ &\qquad\text{ for } q \in \{3, 4\} \text{ and where } i_{q+1} = i_1,\\
    \cB_5 &:= \{\max_{v \in V} N_v \geq L\}\ .
\end{align*}
where $N_v = |\{i \in [m] : v \in I_i\}|$. 
Let $\mathcal G = \left(\bigcup_{i=2}^5 \cB_i\right)^c$. We claim that $\mathcal G \subseteq \cE$, in which case it would suffice to show that $\tnu(\mathcal G) = 1-o(1)$.

Indeed, on $\mathcal G$, every pair of distinct embedded components intersect in at most one vertex and therefore share no edges. Now suppose that the union of $\mathcal T(\boldsymbol\phi)$ induces a triangle $T$ that does not come from $H_i$ for any $i$. Then the edges of $T$ must come from at least two distinct components. If two of the edges are from $H_i$ and one is from $H_j$ for $i \neq j$, then $|I_i \cap I_j| \geq 2$, contradicting $\cB_2^c$. If the three edges come from three distinct components, then these and the vertices of $T$ contradict $\cB_3^c$. Thus, for $\mathcal T(\boldsymbol\phi) \in \mathcal G$, the union of $\mathcal T(\boldsymbol\phi)$ has exactly $k$ triangles.

It remains to check typicality. Since distinct components share no edge and every $H_i$ contains at most three triangles, the union contains neither a $K_4$ nor four edge-connected triangles. If the union contained $r$ edge-disjoint triangles having exactly one common vertex, then at least $L$ different components would contain that vertex, contradicting
$\cB_5^c$.

Finally, suppose that four triangles in the union formed one of the forbidden configurations whose triangle-intersection graph is $C_4$. A direct check of the four allowed component types shows that every two triangles belonging to the same component intersect in at least one vertex. Hence a component cannot contain two opposite triangles of this $C_4$. Partition the four triangles into maximal consecutive blocks belonging to the same edge-component. If there are two blocks, the two corresponding components intersect at the two distinct boundary vertices of the cycle, contradicting $\cB_2^c$. If there are three or four blocks, the corresponding components together with the distinct boundary vertices contradict
$\cB_3^c$ or $\cB_4^c$. This proves
$\mathcal G\subseteq\cE$. 

Now we bound the probability of each bad event. First, as all the $H_i$ have bounded order, for each fixed pair $i \neq j$, we have
$$\tnu(|I_i \cap I_j| \geq 2) = O(n^{-2})$$
and consequently,
\begin{equation}\label{eq: placed together}
\tnu(\cB_2) = O\left(\frac{m^2}{n^2}\right) = o(1)\ .
\end{equation}
For $\cB_3$, given a fixed $i_1, i_2, i_3$ and three pairs of vertices from the component graphs whose images are to be identified, the probability that these components and vertices satisfy $\cB_3$ is $O(n^{-3})$. Since each component has bounded order,
$$\tnu(\cB_3) = O\left(\frac{m^3}{n^3}\right) = o(1)$$
and by analogous reasoning,
$$\tnu(\cB_4) = O\left(\frac{m^4}{n^4}\right) = o(1)\ .$$
For $\cB_5$, notice that the indicators $\mathds{1}_{v \in I_i}$ are independent over $i$, and 
$$\tnu(v \in I_i) = \frac{|V(H_i)|}{n} \leq \frac{5}{n}\ .$$
Thus $N_v$ is stochastically dominated by $\Bin(m,5/n)$, and hence
$$\tnu(\cB_5) \leq n\left(\frac{5em}{nL}\right)^L =o(1)$$
where we use $m/n=o(1)$ and $L=\Theta(\log n)$.

Altogether, this shows that $\tnu(\mathcal G) = 1-o(1)$ as desired.

For the second part of the claim, we consider the distribution $\tnu(\cdot \mid \cE)$. Every $F\in\cF_{k,Q,R,S,\typ}$ has the same number of preimages under the independent embedding construction. Indeed, the edge-components of $F$ may be assigned to the corresponding positions in the ordered list in
$t!\,Q!\,R!\,S!$ ways, and each resulting component image has
$|\operatorname{Aut}(H_i)|$ embeddings from its corresponding abstract graph. Thus the number of preimages of $F$ is
$$t!\,Q!\,R!\,S!
\prod_{i=1}^m|\operatorname{Aut}(H_i)|\ ,$$
which is independent of $F$. It follows that the distribution of $\mathcal T(\boldsymbol\phi)$ conditional on $\cE$ is exactly $\nu$.
\end{proof}

Let 
$$\td(v) := \sum_{i=1}^m \sum_{x \in V(H_i)} \deg_{H_i}(x)\mathds{1}_{\phi_i(x) = v}$$
be the total degree of $v$ once all components have been embedded, and let
$$\tD := \sum_{v \in V} \td(v)^2\ .$$

\begin{lemma}
\label{lemma: indept-degree-moments}
In the independent embedding model,
\begin{align}
    \tE[\tD] = 12k + 2Q + 6R + 8S +\frac{4}{n}\left((3k-Q-2R-2S)^2-(9k+7Q+22R+22S)\right),
\end{align}
and 
$$\Var_{\tnu}(\tD) = O\left(\frac{m^2}{n}\right)\ .$$

\end{lemma}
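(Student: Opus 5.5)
The plan is to decompose $\tD$ into a deterministic diagonal part and a random cross part, computing each exactly. For each $i$ and $v\in V$ set $Y_i(v):=\deg_{H_i}(\phi_i^{-1}(v))$ if $v\in I_i$ and $Y_i(v):=0$ otherwise. Then $\td(v)=\sum_i Y_i(v)$ and
\begin{align}
\tD=\sum_{i=1}^m\sum_{x\in V(H_i)}\deg_{H_i}(x)^2+\sum_{i\neq j} Z_{ij},\qquad Z_{ij}:=\sum_{v\in V}Y_i(v)Y_j(v),
\end{align}
where the second sum runs over ordered pairs. Since each $\phi_i$ is injective, the first sum does not depend on $\boldsymbol\phi$.

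\emph{Diagonal term.} I would read off the degree sequences of the components in Figure~\ref{fig: side sharing}.
\begin{itemize}
\item A triangle has degrees $(2,2,2)$, contributing $12$.
\item A diamond has degrees $(2,3,3,2)$, contributing $26$.
\item A piglet has degrees $(2,4,2,3,3)$, contributing $42$.
\item A $3$-book has degrees $(4,4,2,2,2)$, contributing $44$.
\end{itemize}
With $t=k-2Q-3R-3S$ the diagonal term is $12t+26Q+42R+44S=12k+2Q+6R+8S$.

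\emph{Cross term in expectation.} For $i\neq j$, the variables $Y_i(v)$ and $Y_j(v)$ are independent. By uniformity of $\phi_i$ and the handshake lemma, $\tE[Y_i(v)]=2e(H_i)/n$. Hence $\tE[Z_{ij}]=n\cdot\frac{2e(H_i)}{n}\cdot\frac{2e(H_j)}{n}=4e(H_i)e(H_j)/n$. Summing over ordered pairs gives $\frac4n\big((\sum_i e(H_i))^2-\sum_i e(H_i)^2\big)$. We have $\sum_i e(H_i)=e_F=3k-Q-2R-2S$. The edge counts of the four types are $3,5,7,7$, so $\sum_i e(H_i)^2=9t+25Q+49R+49S=9k+7Q+22R+22S$. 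This yields the stated formula for $\tE[\tD]$.

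\emph{Variance.} Only the cross term $W=\sum_{i\ne j}Z_{ij}$ is random, and I would expand $\Var(W)=\sum\Cov(Z_{ij},Z_{i'j'})$ over pairs of ordered pairs.
\begin{itemize}
\item If $\{i,j\}\cap\{i',j'\}=\emptyset$, the two variables are independent, so the covariance vanishes.
\item If the index sets coincide, I would bound the covariance by $\tE[Z_{ij}^2]$. Since $Z_{ij}=O(1)$ and is nonzero only when $I_i\cap I_j\neq\emptyset$, which has probability $O(1/n)$, this is $O(1/n)$. There are $O(m^2)$ such terms, contributing $O(m^2/n)$.
\item If the index sets share exactly one index, say $Z_{ij}$ and $Z_{ik}$ with $j\ne k$, I would condition on $\phi_i$. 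Then $Z_{ij}$ and $Z_{ik}$ are conditionally independent, each with conditional mean $O(1/n)$. So $|\Cov|\le\tE[Z_{ij}Z_{ik}]=O(1/n^2)$. There are $O(m^3)$ such terms, contributing $O(m^3/n^2)=O(m^2/n)\cdot(m/n)=O(m^2/n)$, because $m\le k=O(n^3p^3)=o(n)$.
\end{itemize}

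The main obstacle is this shared-index case. One has to see that conditioning on the common embedding $\phi_i$ decouples the two factors, since a crude bound of $O(1/n)$ per term would give only $O(m^3/n)$. The rest is bookkeeping of the degree sequences of the four component types.
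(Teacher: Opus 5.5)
Your proposal is correct and follows essentially the same route as the paper: the same split into a deterministic diagonal part and a cross term, the same handshake computation of $\tE[\tD]$, and the same case analysis for the variance, conditioning on the shared embedding $\phi_i$. The one difference is in the shared-index case. The paper notes that $\tE[X_{ij}\mid\phi_i]=\frac4n e(H_i)e(H_j)$ is deterministic (your own computation shows this too), so those covariances vanish exactly and $O(m^2/n)$ holds without using $m=o(n)$. Your $O(1/n^2)$-per-term bound instead needs Assumption~\ref{assumption1}, which is in force. Strictly, $|\Cov|$ is bounded by $\max\{\tE[Z_{ij}Z_{ik}],\tE[Z_{ij}]\tE[Z_{ik}]\}$ rather than by $\tE[Z_{ij}Z_{ik}]$ alone, but both are $O(1/n^2)$, so this is harmless.
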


\begin{proof}
Expanding $\tD$ gives us
    \begin{equation}
\tD = \sum_{i=1}^m \sum_{x \in V(H_i)}\deg_{H_i}(x)^2 + \sum_{i \neq j} \sum_{x \in V(H_i)} \sum_{y \in V(H_j)} \deg_{H_i}(x)\deg_{H_j}(y)\mathds 1_{\phi_i(x) = \phi_j(y)}
\end{equation}
The first term is deterministic, so we can compute the expectation as $12(k-2Q-3R-3S) + 26Q + 42R + 44S = 12k + 2Q + 6R + 8S$. For $i \neq j$, 
$$\tnu(\phi_i(x) = \phi_j(y)) = \frac1n$$
so the expectation of the second term becomes \begin{align*} \frac1n \sum_{i \neq j} \sum_{x \in V(H_i)}\deg_{H_i}(x) \sum_{y \in V(H_j)} \deg_{H_j}(y) &= \frac{1}{n}\sum_{i \neq j} \left(2e(H_i) \cdot 2e(H_j)\right)\\
&= \frac4n\left(\left(\sum_{i} e(H_i)\right)^2 - \sum_{i} e(H_i)^2\right)
\end{align*}
which we can again compute deterministically: $\sum_{i} e(H_i) = e_F = 3k - Q - 2R - 2S$ while $\sum_{i} e(H_i)^2 = 9(k-2Q-3R-3S) + 25Q + 49R + 49S = 9k + 7Q + 22R + 22S$. Thus, we have
$$\tE\left[\sum_{i \neq j} \sum_{x \in V(H_i)} \sum_{y \in V(H_j)} \deg_{H_i}(x)\deg_{H_j}(y)\mathds 1_{\phi_i(x) = \phi_j(y)}\right] = \frac{4}{n}(e_F^2 - (9k+7Q+22R+22S))\ .$$
Putting everything together gives precisely the first part of the claim.

For the variance computation, let
$$X_{ij} := \sum_{x \in V(H_i)} \sum_{y \in V(H_j)} \deg_{H_i}(x)\deg_{H_j}(y)\mathds 1_{\phi_i(x) = \phi_j(y)}\ .$$
Since $\tD$ is the sum of a deterministic quantity with $2\sum_{i < j} X_{ij}$, we have
$$\Var_{\tnu}(\tD) = 4\Var_{\tnu}\left(\sum_{i<j}X_{ij}\right)\ .$$
Since the component graphs have bounded order and degree, $X_{ij} = O(1)$ deterministically, and $X_{ij} \neq 0$ if and only if $I_i \cap I_j \neq \emptyset$. Since
$\tnu(I_i \cap I_j \neq \emptyset) = O\left(\frac{1}{n}\right)$
it follows that
$$\tE[X_{ij}^2] = O\left(\frac1n\right)$$
and thus
$$\Var_{\tnu}(X_{ij}) = O\left(\frac1n\right)\ .$$
We next show that the variables $X_{ij}$ are pairwise uncorrelated. If $\{i,j\} \cap \{a,b\} = \emptyset$, then $X_{ij}$ and $X_{ab}$ are independent, so it remains to consider pairs of the form $X_{ij}$ and $X_{i\ell}$ for $j \neq \ell$. Conditioned on $\phi_i$, the random variables $X_{ij}$ and $X_{i\ell}$ are independent, since $\phi_j$ and $\phi_{\ell}$ are independent. But we also have
$$\tE[X_{ij} \mid \phi_i] = \frac{4}{n} e(H_i)e(H_j)$$
which is independent of $\phi_i$, and similarly
$$\tE[X_{i\ell} \mid \phi_i] = \frac{4}{n}e(H_i)e(H_{\ell})\ .$$
Thus, 
$$\Cov_{\tnu}(X_{ij},X_{i\ell})=\tE[\Cov_{\tnu}(X_{ij},X_{i\ell}\mid \phi_i)]+\Cov_{\tnu}(\tE[X_{ij}\mid \phi_i],\tE[X_{i\ell}\mid \phi_i])=0+0\ .$$
Combining these estimates, 
$$\Var_{\tnu}(\tD) = 4 \sum_{i<j} \Var_{\tnu}(X_{ij}) = O\left(\frac{m^2}{n}\right)$$
as claimed.
\end{proof}

We transfer these results back to the measure of interest, $\nu = \nu_{k,Q,R,S,\typ}^{\unif}$. As in the previous section, let
$$D(F) = \sum_{v \in V} \deg_F(v)^2\ .$$

\begin{lemma}
    \label{lemma: fixed-expectation-variance}
Uniformly over $Q, R, S$ satisfying $2Q+3R+3S\leq k$, we have
    $$\E_{\nu}[D(F)] = 12k + 2Q + 6R + 8S +\frac{4}{n}\left((3k-Q-2R-2S)^2-(9k+7Q+22R+22S)\right) + o\left(\frac{m}{\sqrt{n}}\right)\ ,$$
    and
    $$\Var_{\nu}(D(F)) = O\left(\frac{m^2}{n}\right)\ .$$
\end{lemma}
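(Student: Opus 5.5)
By Lemma~\ref{lemma: indept-conditional}, $\nu$ is exactly the law of $\mathcal T(\boldsymbol\phi)$ under $\tnu(\cdot\mid\cE)$, and $\tnu(\cE)=1-o(1)$. On $\cE$ the embedded components share no edges, so $\td(v)=\deg_F(v)$ for every $v$ and $\tD = D(\mathcal T(\boldsymbol\phi))$. Hence
\[
\E_\nu[D(F)] = \tE[\tD\mid\cE],\qquad \Var_\nu(D(F))=\Var_{\tnu}(\tD\mid\cE).
\]
The plan is to transfer Lemma~\ref{lemma: indept-degree-moments} through this conditioning. The transfer costs only a term controlled by the fluctuations of $\tD$ times the probability of $\cE^c$.

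For the expectation, write $\tD = \tE[\tD] + Y$ with $\tE[Y]=0$ and $\tE[Y^2]=\Var_{\tnu}(\tD)=O(m^2/n)$. Then
\[
\tE[\tD\mid\cE]-\tE[\tD] = \frac{\tE[Y\mathds 1_{\cE}]}{\tnu(\cE)} = -\frac{\tE[Y\mathds 1_{\cE^c}]}{\tnu(\cE)}.
\]
By Cauchy--Schwarz this is bounded by
\[
(1+o(1))\,\tE[Y^2]^{1/2}\,\tnu(\cE^c)^{1/2} = O\!\left(\frac{m}{\sqrt n}\right)\cdot o(1) = o\!\left(\frac{m}{\sqrt n}\right).
\]
Combined with the exact formula for $\tE[\tD]$ in Lemma~\ref{lemma: indept-degree-moments}, this gives the first claim.

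For the variance, I use $\Var(Z\mid\cE)\le \E[(Z-c)^2\mid\cE]$ for any constant $c$. Taking $c=\tE[\tD]$ gives
\[
\Var_\nu(D(F)) \le \frac{\tE[Y^2]}{\tnu(\cE)} = (1+o(1))\,O\!\left(\frac{m^2}{n}\right),
\]
which is the second claim.

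The main obstacle is uniformity over $(Q,R,S)$. This requires that $\tnu(\cE)=1-o(1)$ uniformly, which Lemma~\ref{lemma: indept-conditional} asserts, and that the implicit constant in $\Var_{\tnu}(\tD)=O(m^2/n)$ depends only on the bounded component sizes and not on $(Q,R,S)$. The latter holds because the bound in Lemma~\ref{lemma: indept-degree-moments} comes from $X_{ij}=O(1)$ with constants fixed by the four component types. Beyond this, I only need to check the identification $\td=\deg_F$ on $\cE$: each edge of $G_F$ lies in exactly one component, so the degree contributions add without double counting. No other difficulty is expected.
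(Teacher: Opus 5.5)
Your proposal is correct and follows the paper's proof essentially verbatim: you identify $\E_\nu[D(F)]=\tE[\tD\mid\cE]$ via Lemma~\ref{lemma: indept-conditional}, bound the shift from $\tE[\tD]$ by Cauchy--Schwarz against $\tnu(\cE^c)$, and bound the conditional variance by $\Var_{\tnu}(\tD)/\tnu(\cE)$. Your explicit checks that $\td=\deg_F$ on $\cE$ and that the constants are uniform in $(Q,R,S)$ are points the paper leaves implicit.
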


\begin{proof}
 From \zcref[S]{lemma: indept-conditional}, the configuration $\mathcal T(\boldsymbol{\phi})$ conditioned on $\cE$ is distributed according to $\nu$. On $\cE$, we also have $\tD = D(F)$. Thus,
     $$\E_{\nu}[D(F)] = \tE[\tD \mid \cE]\ .$$
To compare the expected value under the two different distributions, we apply the Cauchy-Schwarz inequality. 
\begin{align*}
    |\E_{\nu}[D(F)] - \tE[\tD] | &= |\tE[\tD \mid \cE] - \tE[\tD]|\\
    &= \frac{|\tE[(\tD - \tE[\tD])\mathds 1_{\cE^c}]}{\tnu(\cE)}\\
    &\leq \frac{\sqrt{\Var_{\tnu}(\tD)}\sqrt{\tnu(\cE^c)}}{\tnu(\cE)}\\
    &= o\left(\frac{m}{\sqrt{n}}\right)
\end{align*}
where in the last line, we use \zcref[S]{lemma: indept-degree-moments} and \zcref[S]{lemma: indept-conditional}. Combining this with the computation of $\tE[\tD]$ from \zcref[S]{lemma: indept-degree-moments} gives the first part of the claim. 

Similarly, we have for the variance
\begin{align*} \Var_{\nu}(D(F)) &= \Var_{\tnu}(\tD \mid \cE)\\
&\leq \tE[(\tD - \tE[\tD])^2 \mid \cE]\\
&\leq \frac{\Var_{\tnu}(\tD)}{\tnu(\cE)}\\
&= O\left(\frac{m^2}{n}\right)\ .
\end{align*}

\end{proof}

We use these results to analyze the term involving the sum of squared degrees in \eqref{eq:formula_M}.

\begin{lemma}
\label{lemma:expectation-exchange}
For $0\leq\lambda=O(p)$,
uniformly over $Q,R,S$ satisfying $2Q+3R+3S\leq k$,
\[
\E_\nu[e^{-\lambda D(F)}]
=
(1+o(1))
\exp\left(-\lambda\E_\nu[D(F)]\right).
\]
In particular, the assertion holds for
$\lambda=p/2-np^3$.
\end{lemma}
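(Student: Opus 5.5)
The plan is to center $D(F)$. Write $Y := D(F) - \E_\nu[D(F)]$, so the claim becomes $\E_\nu[e^{-\lambda Y}] = 1+o(1)$. I will prove matching lower and upper bounds using the mean and variance from \zcref[S]{lemma: fixed-expectation-variance}, plus a deterministic lower bound on $D(F)$ that controls the dangerous side of the exponential.

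\textbf{Lower bound.} Since $\E_\nu[Y]=0$, Jensen's inequality gives $\E_\nu[e^{-\lambda Y}] \ge e^{0} = 1$.

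\textbf{Deterministic lower bound on $D(F)$.} The upper bound needs care because $e^{-\lambda Y}$ is large exactly when $Y$ is very negative, i.e.\ when $D(F)$ is much smaller than its mean. I will rule this out deterministically.
\begin{itemize}
\item For every $F \in \cF_{k,Q,R,S,\typ}$, each vertex degree $\deg_F(v)$ is a sum of the degrees of $v$ in the components containing it.
\item By superadditivity of squares, $(a+b)^2 \ge a^2+b^2$ for $a,b\ge 0$, we get $D(F) \ge D_0 := 12k+2Q+6R+8S$, the deterministic term computed in \zcref[S]{lemma: indept-degree-moments}.
\item Hence, by \zcref[S]{lemma: fixed-expectation-variance},
\[
-Y \le \E_\nu[D(F)] - D_0 = O\!\left(\frac{m^2}{n} + \frac{m}{\sqrt n}\right).
\]
\end{itemize}
Now use $m \le k = O(n^3p^3)$ and $\lambda = O(p)$:
\[
\lambda\frac{m^2}{n} = O(n^5p^7) = o(1), \qquad \lambda\frac{m}{\sqrt n} = O(n^{5/2}p^4) = o(1),
\]
where both estimates use $p=o(n^{-2/3})$. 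Thus $-\lambda Y \le o(1)$ holds deterministically and uniformly in $Q,R,S$.

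\textbf{Upper bound.} For real $x \le 1$ we have $e^{-x}\cdot$ bounded behaviour in the form
\[
e^{x} \le 1 + x + C x^2 \qquad (x \le 1)
\]
for an absolute constant $C$. For $x\le 0$ this is the standard bound $e^{x}\le 1+x+x^2/2$; for $0\le x\le 1$ it holds with $C=e$. Applying it with $x = -\lambda Y$, which is at most $o(1) \le 1$, and taking expectations gives
\[
\E_\nu[e^{-\lambda Y}] \le 1 - \lambda\E_\nu[Y] + C\lambda^2\Var_\nu(D(F)) = 1 + O\!\left(\frac{p^2 m^2}{n}\right).
\]
Finally,
\[
\frac{p^2m^2}{n} = O(n^5p^8) = o(1)
\]
by \zcref[S]{lemma: fixed-expectation-variance} and Assumption~\ref{assumption1}. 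Combining the two bounds yields $\E_\nu[e^{-\lambda D(F)}] = (1+o(1))\,e^{-\lambda \E_\nu[D(F)]}$, uniformly over $Q,R,S$.

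\textbf{The choice $\lambda = p/2-np^3$.} Since $np^2 = o(1)$, this $\lambda$ is $(1/2+o(1))p$, which is nonnegative and $O(p)$, so the lemma applies.

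\textbf{Main obstacle.} The real difficulty is integrability on the lower tail of $D(F)$: $\lambda\E_\nu[D(F)]$ can be as large as order $n^3p^4$, which may diverge, so a plain Chebyshev argument does not suffice. The superadditivity bound $D(F)\ge D_0$ resolves this, because only the $O(m^2/n + m/\sqrt n)$ excess of the mean over $D_0$ can ever be lost. The only thing left to check is that the $o(m/\sqrt n)$ error in \zcref[S]{lemma: fixed-expectation-variance} is also negligible after multiplying by $\lambda$, which was verified above.
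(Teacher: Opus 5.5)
Your proof has a genuine gap. The step that fails is $\lambda m^2/n = O(n^5p^7) = o(1)$. Assumption~\ref{assumption1} gives only $p = o(n^{-2/3})$, hence $n^5p^7 = o(n^{1/3})$, and $n^5p^7 \to \infty$ throughout $n^{-5/7} \ll p \ll n^{-2/3}$. Your other two estimates, $\lambda m/\sqrt{n} = O(n^{5/2}p^4)$ and $p^2m^2/n = O(n^5p^8)$, are indeed $o(1)$.

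This is not looseness in your bound. The floor $D(F) \ge D_0$ is correct, but the mean genuinely sits far above it. Since $e_F \ge 2k$, Lemma~\ref{lemma: fixed-expectation-variance} gives $\E_\nu[D(F)] - D_0 = (4+o(1))e_F^2/n$ once $k \gg \sqrt{n}$. So for $k = \Theta(\mu)$ you get $\lambda(\E_\nu[D(F)] - D_0) = \Theta(k^2p/n) = \Theta(n^5p^7)$. This is exactly the $-18k^2p/n$ term the paper carries in \eqref{eq:fixed-Q-P(F)}, which feeds the $n^5p^7$ correction in Theorem~\ref{main theorem}.

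Consequently $x = -\lambda Y$ is bounded above only by a diverging quantity $M$, and $e^{x} \le 1 + x + Cx^2$ with an absolute constant $C$ does not apply. Mean and variance information cannot rescue this. A mean-zero variable with $x \le M$ and variance $\asymp pM$ may put mass $\asymp p/M$ at $M$, giving $\E e^{x} \approx 1 + p e^{M}/M$, which is unbounded once $M \gg \log n$. As written, your argument proves the lemma only when $k^2p/n = O(1)$, for example when $p = O(n^{-5/7})$.

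What is missing is a genuine exponential (sub-Gaussian) lower-tail estimate for $D(F)$ with variance proxy $O(m^2/n)$. That is, you need $\log \E_\nu[e^{-\lambda(D(F) - \E_\nu D(F))}] = O(\lambda^2 m^2/n + \cdots)$, a bound that tolerates a diverging cap. The paper proves this in the independent embedding model $\tnu$ by exposing $\phi_1,\dots,\phi_m$ one at a time. The increments $Y_i = Z_i - Z_{i-1}$ of the cross-component part of $\tD$ are nonnegative, have deterministic conditional mean $O(i/n)$, and have conditional second moment $O((Z_{i-1}+i)/n)$.

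Nonnegativity is what lets you apply $e^{-u} \le 1 - u + u^2/2$ for $u \ge 0$ to each increment with no truncation. Iterating with slightly decreasing parameters $\theta_i$ absorbs the $Z_{i-1}$ term, yielding
$\log \tE[e^{-\lambda(\tD - \tE\tD)}] = O(\lambda^2m^2/n + \lambda^2m^3/n^2) = O(n^5p^8 + n^7p^{11}) = o(1)$.
The bound is then transferred to $\nu$ using $\tnu(\cE) = 1-o(1)$, nonnegativity of the integrand, and $\lambda|\E_\nu[D(F)] - \tE[\tD]| = o(1)$. Your Jensen lower bound and your treatment of $\lambda = p/2 - np^3$ are fine.
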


\begin{proof}
We start by proving the corresponding statement in the independent embedding model $\tnu$. Suppose we expose the embeddings $\phi_1, \dots, \phi_m$ sequentially and let $\cH_j$ be the $\sigma$-algebra generated by $\phi_1, \dots, \phi_j$, that is, the information revealed by the first $j$ component embeddings. Let
$$d_j(v) := \sum_{i \leq j} \sum_{x \in V(H_i)} \deg_{H_i}(x) \mathds 1_{\phi_i(x) = v}$$
and let
$$Z_j := \sum_{v \in V} d_j(v)^2 - \sum_{i \leq j} \sum_{x \in V(H_i)} \deg_{H_i}(x)^2\ .$$
Thus $Z_j$ counts the cross-component contribution to the sum of squared degrees. In particular, 
$$Z_m = \tD - \sum_{i=1}^m \sum_{x \in V(H_i)} \deg_{H_i}(x)^2\ .$$
The subtracted quantity is deterministic, so
\begin{equation}\label{eqn:Z-D}
Z_m - \tE[Z_m] = \tD - \tE[\tD]\ .
\end{equation}
Let $Y_i = Z_i - Z_{i-1}$. Then
$$Y_i = 2\sum_{x \in V(H_i)}\deg_{H_i}(x)d_{i-1}(\phi_i(x))\ .$$
Since each $\phi_i(x)$ has a uniform marginal distribution on $[n]$,
\begin{align*}
    \mu_i &:= \tE[Y_i \mid \cH_{i-1}]\\
    &= \frac{2}{n} \left(\sum_{x \in V(H_i)} \deg_{H_i}(x)\right)\left(\sum_{v \in V} d_{i-1}(v)\right)\\
    &= O\left(\frac{i}{n}\right)
\end{align*}
where in the last step we use the observation $\sum_{v \in V} d_{i-1}(v) = 2 \sum_{j < i} e(H_j) = O(i)$.
Moreover, $\mu_i$ is deterministic. 

By Cauchy--Schwarz and the bounded size and degrees of the components,
there is an absolute constant $C_0$ such that
$$Y_i^2 \leq 4\left(\sum_{x \in V(H_i)}\deg_{H_i}(x)^2\right)\left(\sum_{x \in V(H_i)}d_{i-1}(\phi_i(x))^2\right) \leq C_0\sum_{x\in V(H_i)}d_{i-1}(\phi_i(x))^2\ .$$
Taking conditional expectations gives, for some constants $C_1, C_2 > 0$,
\begin{align*}
\widetilde\E[Y_i^2\mid\mathcal H_{i-1}]
&\leq C_0\sum_{x \in V(H_i)}\tE[d_{i-1}(\phi_i(x))^2\mid \cH_{i-1}]\\
&\leq \frac{C_1}{n}
\sum_{v\in V}d_{i-1}(v)^2\\
&= \frac{C_1}{n}\left(Z_{i-1}+C_2i\right).
\end{align*} 
where in the last line, we rearrange the definition of $Z_{i-1}$ to substitute for $\sum_{v \in V} d_{i-1}(v)^2$. 

We can now bound the moment-generating function under $\widetilde\nu$. For $u\geq0$, we know $e^{-u}\leq1-u+\frac{u^2}{2}$, and so
for every $\theta\geq0$,
\begin{align*}
    \tE\left[e^{-\theta Y_i}\mid\mathcal H_{i-1}\right]
&\leq
1-\theta\mu_i
+\frac{\theta^2}{2}
\tE[Y_i^2\mid\mathcal H_{i-1}]\\
&\leq \exp\left(-\theta\mu_i + \frac{C_3\theta^2}{n}(Z_{i-1}+i)\right)\ .
\end{align*}
Now recall that $Z_i = Z_{i-1} + Y_i$. As $Z_{i-1}$ is $\cH_{i-1}$-measurable, by the Law of Total Expectation,
$$\tE[e^{-\theta Z_i}] = \tE[e^{-\theta Z_{i-1}} \tE[e^{-\theta Y_i} \mid \cH_{i-1}]]\ .$$
It follows that
\begin{align} \tE[e^{-\theta Z_i}] &\leq \tE\left[e^{-\theta Z_{i-1}}\exp\left(-\theta\mu_i + \frac{C_3\theta^2}{n}(Z_{i-1}+i)\right)\right]\\
&\leq \exp\left(-\theta\mu_i+\frac{C_3i\theta^2}{n}\right)\tE\left[\exp\left(-\left(\theta-\frac{C_3\theta^2}{n}\right)Z_{i-1}\right)\right].
\label{eq:Laplace-recursion}
\end{align}
Set $\theta_m=\lambda$ and, recursively for $i=m,\ldots,1$, set
$$\theta_{i-1}:=\theta_i-\frac{C_3\theta_i^2}{n}.$$
Since $\theta_i \leq \lam$, it follows that
$$0\leq\lambda-\theta_i\leq\frac{C_3(m-i)\lambda^2}{n}\ .$$
Moreover, since $\frac{m\lambda}{n}=O(n^2p^4)=o(1)$,
all the $\theta_i$ are nonnegative for sufficiently large $n$. Iterating \eqref{eq:Laplace-recursion} gives
$$\log\tE[e^{-\lambda Z_m}]\leq-\sum_{i=1}^m\theta_i\mu_i+\frac{C_3}{n}\sum_{i=1}^mi\theta_i^2\ .$$
To bound the first term on the right-hand side, we write $-\sum_{i=1}^m\theta_i\mu_i = -\lam\sum_{i=1}^m \mu_i + \sum_{i=1}^m (\lam - \theta_i)\mu_i$. By the bounds previously shown on $\mu_i$ and $\lam-\theta_i$, we have
$$\sum_{i=1}^m (\lam-\theta_i)\mu_i = O\left(\sum_{i=1}^m\frac{\lam^2}{n^2}i(m-i)\right) = O\left(\frac{\lam^2m^3}{n^2}\right)$$
and so, using $\theta_i \leq \lambda$,
$$\log\tE[e^{-\lambda Z_m}]\leq-\lambda\tE [Z_m]+O\left(\frac{\lambda^2m^2}{n}+\frac{\lambda^2m^3}{n^2}\right)\ .$$
On the other hand, Jensen's inequality gives
$$\log\tE[e^{-\lambda Z_m}]\geq-\lambda\tE[Z_m]\ .$$
Therefore,
$$1\leq\tE\left[e^{-\lambda(Z_m-\tE[Z_m])}
\right] \leq \exp\left(O\left(\frac{\lambda^2m^2}{n}+\frac{\lambda^2m^3}{n^2}\right)\right)\ .$$
Since $m\leq k=O(n^3p^3)$ by Assumption~\ref{assumption1} and $\lambda=O(p)$, we know that $\frac{\lambda^2m^2}{n}=O(n^5p^8)=o(1)$
and $\frac{\lambda^2m^3}{n^2}=O(n^7p^{11})
=o(1)$. 
Together with \eqref{eqn:Z-D}, this shows that
\begin{align}
\tE\left[e^{-\lambda(\tD-\tE[\tD])}\right]=1+o(1).
\label{eq:independent-exchange}
\end{align}
We now transfer the estimate from $\widetilde \nu$ to $\nu$. Recall from the previous proof that $|\E_{\nu}[D(F)] - \tE[\tD] | = o\left(\frac{m}{\sqrt{n}}\right)$. Since $\frac{\lam m}{\sqrt{n}} = O(\frac{pk}{\sqrt{n}}) = O(n^{5/2}p^4) = o(1)$, it follows that $\lam|\E_{\nu}[D(F)] - \tE[\tD] | = o(1)$. Using $\tnu(\cE) = 1-o(1)$ and \eqref{eq:independent-exchange},
\begin{align}
\E_\nu\left[e^{-\lambda(D(F)-\E_\nu [D(F)])}\right]&=
\frac{\tE\left[e^{-\lambda(\tD-\E_{\nu} D(F))}
\mathds 1_{\cE}
\right]}{\tnu(\cE)}\\
&\leq\frac{\exp\left(\lambda\left(\E_\nu[D(F)]-\tE[\tD]
\right)\right)}{\tnu(\cE)}
\tE\left[e^{-\lambda(\tD-\tE[\tD])}\right]\\
&=1+o(1).
\end{align}
On the other hand, Jensen's inequality under $\nu$ gives
$$\E_\nu\left[e^{-\lambda(D(F)-\E_\nu[D(F)])}\right]\geq1\ .$$
Thus
$$\E_\nu\left[e^{-\lambda(D(F)-\E_\nu[D(F)])}
\right]=1+o(1)\ ,$$
which proves the lemma.
\end{proof}

All estimates above are uniform in $Q,R,S$, since the only properties used were $m\leq k$ and the fact that the four possible component types have uniformly bounded order and degrees. These results allow us to simplify the expression in \eqref{eq:formula_M}. 
Recalling Assumption~\ref{assumption1}, we obtain
$$\E_{\nufix}\left[\exp\left(\left(-\frac{p}{2} + np^3\right)D(F)\right)\right] = \exp\left(-6kp - \frac{18k^2p}{n} + O(p(Q+R+S)) + o(1)\right)$$
and thus
  \begin{align}
        \E_{\nufix}\left[\cP(F)\right]=\exp\bigg(&-\frac{n^3p^3}{6} -3kp+e_F (p-np^2+2n^2p^4)+ \frac{n^4p^5}{4} -\frac{7n^5p^7}{12}\\
        & -\frac{18k^2p}{n}+O\left(p(Q+R+S)\right)+o(1) \bigg).\label{eq:fixed-Q-P(F)}
\end{align}

\section{Poisson Approximation for Shared-Edge Components}
\label{section: overlapping edges}
In this section, we show that the probability the triangles of $G(n,p)$ form a configuration in $\cF_{k,\typ}$ is asymptotic to the expression from \zcref[S]{main theorem}. To that end, let $W = W(n, p, k)$ denote the asymptotic expression in \eqref{eq:mainThrm}, with $\theta = \frac{6k}{n^3p^3}$.
$$W = W(n,p,k) := \frac{{n \choose 3}^kp^{3k}}{k!}\exp\left(-\frac16n^3p^3 + \frac{(1-\theta)^2}{4}n^4p^5 - \frac{(1-\theta)^2(2\theta + 7)}{12}n^5p^7\right)\ .$$
For the computations below, it will be convenient to use the equivalent expanded form of $W$, 
\begin{align}\label{eq:Wdefinition}
   W = \frac{{n\choose 3}^kp^{3k}}{k!} \exp\left(-\frac{n^3p^3}{6}-3knp^2+\frac{n^4p^5}{4}-\frac{9k^2p}{n}+6kn^2p^4-\frac{7n^5p^7}{12}+\frac{9k^2}{pn^2}-\frac{36k^3}{p^2n^4}\right).
\end{align}

Note that $W$ is similar to the expression from \eqref{eq:fixed-Q-P(F)} but is a function of $n, p,$ and $k$ which does not depend on $Q, R,$ or $S$. 
Indeed, we show in the following lemma that we may drop the dependence on $Q, R, S$ in the results from the previous section by showing that their joint distribution is asymptotically that of independent Poisson random variables.

Let 
$$\lambda_Q :=\frac{9k^2}{pn^2}, \quad \lambda_R :=\frac{108k^3}{p^2n^4}, \quad \lambda_S :=\frac{18k^3}{p^2n^4}\ .$$ 
Observe that $\lam_Q = \frac{\theta^2}{4}n^4p^5, \lam_R = \frac{\theta^3}{2}n^5p^7$, and $\lam_S = \frac{\theta^3}{12}n^5p^7$. 
Define $\nu_{p,k,\typ}$ to be the distribution on triangle configurations induced by $G(n,p)$ conditioned on the event $\{\Tri(G) \in \cF_{k,\typ}\}$.

Let $\rho^{\Pois}_{\lam}$ denote the probability mass function of a $\Pois(\lam)$ distributed random variable, and let $\rho^{\Pois}$ be the product Poisson measure on $(Q, R, S) \in \Z_{\geq 0}^3$ where $Q \sim \Pois(\lam_Q), R \sim \Pois(\lam_R)$, and $S \sim \Pois(\lam_S)$.

\begin{lemma}
\label{lemma: typ result}
Let $(Q(F), R(F), S(F))_{\nu_{p,k,\typ}}$ denote the distribution on tuples $\{(Q,R,S) \in \Z_{\geq 0}^3 \mid 2Q+3R+3S \leq k\}$ induced by $\nu_{p,k,\typ}$. Then
$$\| (Q(F), R(F), S(F))_{\nu_{p,k,\typ}}-\rho^{\Pois}\|_\TV=o(1)\ ,$$
and
    \begin{align}
     \Pro\left[\Tri(G)\in\cF_{k,\typ}\right]=\sum_{F\in\cF_{k,\typ}} p^{e_F}\cP(F)
     &=(1+o(1))W.
    \end{align}
\end{lemma}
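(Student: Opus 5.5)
We will split the sum $\sum_{F\in\cF_{k,\typ}} p^{e_F}\cP(F)$ according to the component counts $(Q,R,S)$:
\[
\sum_{F\in\cF_{k,\typ}} p^{e_F}\cP(F)=\sum_{Q,R,S}|\cF_{k,Q,R,S,\typ}|\,p^{3k-Q-2R-2S}\,\E_{\nufix}[\cP(F)],
\]
using $e_F=3k-Q-2R-2S$, which is constant on each class. For the expectation we use \eqref{eq:fixed-Q-P(F)}. The inner factor $e^{e_F(p-np^2+2n^2p^4)}$ contributes $\exp(3k(p-np^2+2n^2p^4))$ times $\exp(-(Q+2R+2S)(p-np^2+2n^2p^4))$. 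The correction $-(Q+2R+2S)(p-np^2)$ has to be carried along. It is $o(1)$ only on the typical range $Q=O(\lambda_Q)$, $R,S=O(\lambda_R)$, where $p\lambda_Q=O(n^4p^6)=o(1)$ and $np^2\lambda_Q=O(n^5p^7)$. Since $np^2\lambda_Q$ need not be $o(1)$, the plan is to absorb the factor $(1-np^2+\dots)^Q$ into the Poisson weights below rather than discard it. The $3kp$ terms cancel against $-3kp$, leaving $-3knp^2+6kn^2p^4-18k^2p/n$.

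Next we count $|\cF_{k,Q,R,S,\typ}|$ using \zcref[S]{lemma: indept-conditional}. The count is $\tnu(\cE)$ times the total number of independent embeddings, divided by the number of preimages:
\[
|\cF_{k,Q,R,S,\typ}|=(1+o(1))\frac{(n)_3^{t}(n)_4^{Q}(n)_5^{R+S}}{t!\,Q!\,R!\,S!\,6^t\,4^Q\,2^R\,12^S},
\]
with the automorphism counts of the triangle, diamond, piglet and $3$-book being $6,4,2,12$. We then compare this with the $Q=R=S=0$ term, which is $\binom{n}{3}^k/k!$ up to the factor $(n)_3^k/n^{3k}$. Here $(n)_3^k=n^{3k}\exp(-3k/n+O(k/n^2))$, and $3k/n=O(n^2p^3)=o(1)$. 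For the ratio $k!/t!=k^{2Q+3R+3S}\exp(-(2Q+3R+3S)^2/2k+\dots)$ we need a second-order expansion. The ratio of the $(Q,R,S)$ term to the base term is then approximately
\[
\frac{1}{Q!R!S!}\left(\frac{k^2\cdot 36}{4n^2p}\right)^{Q}\left(\frac{k^3 6^3}{2n^4p^2}\right)^{R}\left(\frac{k^3 6^3}{12n^4p^2}\right)^{S}=\frac{\lambda_Q^Q\lambda_R^R\lambda_S^S}{Q!R!S!},
\]
times correction factors. These corrections are $\exp(-O(Q^2/k))$ from $k!/t!$, together with $(1-np^2)^{Q}$ and the analogous factors for $R,S$ coming from the $e_F$ term. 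The $-np^2Q$ correction shifts $\lambda_Q$ to $\lambda_Q(1-np^2)$, which produces an extra $-np^2\lambda_Q=-9k^2/n^4p\cdot$ term in the exponent. The $k!/t!$ correction with $Q\approx\lambda_Q$ produces $-2\lambda_Q^2/k$, of order $k^3/(p^2n^4)$. Matching these against \eqref{eq:Wdefinition} is where the terms $9k^2/(pn^2)=\lambda_Q$ and $-36k^3/(p^2n^4)$ arise: the $\lambda_R+\lambda_S=126k^3/(p^2n^4)$ combine with these corrections. Summing over $(Q,R,S)$ gives $\exp(\lambda_Q+\lambda_R+\lambda_S+\text{corrections})$ in closed form, and we verify the exponent agrees with $W$.

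For the total variation claim, the normalized weights of $(Q,R,S)$ are, uniformly on the window $|Q-\lambda_Q|\le \omega\sqrt{\lambda_Q}$ (and similarly for $R,S$), equal to $(1+o(1))$ times the Poisson probabilities of slightly shifted means $\lambda'$ with $\lambda'-\lambda=o(\sqrt\lambda)$. That gives TV distance $o(1)$ to $\rho^{\Pois}$. Outside the window we use the $O(p(Q+R+S))$ error and crude bounds: the weights are dominated by $C^{Q+R+S}\lambda^Q/Q!$-type sums with $Q\le k$, so the tails contribute a vanishing fraction.

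The main obstacle is the bookkeeping of second-order corrections. The relevant errors are all of order $n^5p^7$ or $n^4p^5\cdot\theta$, and each must be tracked exactly: $(n)_3$ versus $n^3$, $k!/t!$, the $np^2$ part of $e_F$ weights, and the uniformity of the $O(p(Q+R+S))$ error of \eqref{eq:fixed-Q-P(F)} on the window. Since $\lambda_Q$ may be as large as $n^4p^5\to\infty$, we would first do the computation with $Q$ treated exactly as a Poisson sum with quadratic tilt $\exp(-cQ^2/k)$, and evaluate it by Laplace's method. This is where the cubic term $-36k^3/(p^2n^4)$ should emerge.
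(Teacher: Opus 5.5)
Your route is the paper's: split by $(Q,R,S)$, take $\E_{\nufix}[\cP(F)]$ from \eqref{eq:fixed-Q-P(F)}, count $|\cF_{k,Q,R,S,\typ}|$ through the independent-embedding model, read off $\lambda_Q^Q\lambda_R^R\lambda_S^S/(Q!R!S!)$ as the leading ratio to the $Q=R=S=0$ term, and obtain $-36k^3/(p^2n^4)$ as $2\lambda_Q^2/k-\lambda_R-\lambda_S$. Your shifted-mean treatment of the window is equivalent to the paper's proof that $B=o(1)$ on a window much wider than $\sqrt{\lambda_Q}$.

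However, the one correction you single out as essential has the wrong sign. Since $e_F=3k-Q-2R-2S$, the factor $e^{-e_Fnp^2}$ equals $e^{-3knp^2}e^{+(Q+2R+2S)np^2}$: a diamond saves an edge of $G_F$ and so \emph{raises} $\cP(F)$. Hence the $Q$-weight is $(\lambda_Qe^{np^2})^Q/Q!$, not $(\lambda_Q(1-np^2))^Q/Q!$, and summing contributes $+np^2\lambda_Q=+9k^2p/n$. Only with this sign does $-18k^2p/n+9k^2p/n$ reproduce the $-9k^2p/n$ of \eqref{eq:Wdefinition}. With your sign the exponent carries $-27k^2p/n$. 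That is off by $18k^2p/n=\Theta(n^5p^7)$ when $k=\Theta(\mu)$, which is unbounded for $p\gg n^{-5/7}$, so the promised verification against $W$ fails as written. Relatedly, even on the typical range $Qnp^2$ is not $o(1)$; what is $o(1)$ there is $(Q-\lambda_Q)np^2$, which is how the paper packages this term in $A$.

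Second, the tail step does not work as stated. Dominating the weights by $C^{Q+R+S}\lambda_Q^Q/Q!$ with a fixed $C>1$ gives $\sum_Q(C\lambda_Q)^Q/Q!=e^{C\lambda_Q}$, which swamps the main term $e^{\lambda_Q}$ whenever $\lambda_Q\to\infty$. You need the actual per-unit tilt. Using $\log\frac{k!}{k^c(k-c)!}\le-\frac{c(c-1)}{2k}$, the $Q$-dependent part of the exponent is at most $\varepsilon_n|Q-\lambda_Q|+o(1)$ with $\varepsilon_n=O(np^2+p+(1+\lambda_Q)/k)$. Moreover $\varepsilon_n\sqrt{\lambda_Q}\to0$; for example, $np^2\sqrt{\lambda_Q}$ and $\lambda_Q^{3/2}/k$ are both $O(n^3p^{9/2})$. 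So outside the window this tilt is $o(|Q-\lambda_Q|^2/\lambda_Q)$ and loses to the Poisson tail. For $Q>2\lambda_Q$, the bound $\rho^{\Pois}_{\lambda_Q}(Q)\le e^{-\Theta(Q)}$ beats an $o(Q)$ tilt, and the same works for $R,S$.

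When $2Q+3R+3S>k/\log n$, your second-order expansion of $k!/(k-c)!$ is not available. There the paper uses only $k!/(k-c)!\le k^c$ together with $\rho^{\Pois}_{\lambda_Q}(y)\le(e\lambda_Q/y)^y$, where $y=\max\{Q,R,S\}\ge k/(8\log n)$, and treats $k<8\log n$ separately. These are exactly the paper's $\cB_1,\cB_2,\cB_3$ estimates. With the sign fixed and these bounds supplied, your argument goes through.
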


The proof will proceed as follows. We first define a set of ``good'' tuples $\cI$ and a set of configurations $\cF_{\cI}$ that give rise to tuples in $\cI$. The majority of the proof focuses on showing that $\frac{1}{W} \sum_{F \in \cF_{k, \typ} \setminus \cF_{\cI}} p^{e_F}\cP(F) = o(1)$ by way of the estimates from the previous section for fixed values of $Q, R$, and $S$. For $(Q,R,S) \in \cI$, we show that the total contribution from $\cF_{k,Q,R,S,\typ}$ is asymptotic to $W$ times the corresponding Poisson PMFs. We then sum over $\cI$.

\begin{proof}
First, note that the claim follows immediately from \zcref[S]{lemma: formula} in the case of $k = 0$, so we may assume $k \geq 1$. From \eqref{eq:fixed-Q-P(F)} and \eqref{eq:Wdefinition}, we have
\begin{align}\log \big(\E_{\nufix}&[\cP(F)]\big)
-\log \left(\frac{k!}{{n\choose 3}^kp^{3k}}W\right) \\
&\qquad = -3kp + e_F(p - np^2 + 2n^2p^4) - \frac{9k^2p}{n} + 3knp^2 - 6kn^2p^4\\
&\qquad\quad - \frac{9k^2}{pn^2} + \frac{36k^3}{p^2n^4}+ O(p(Q+R+S))+o(1)\ .
\end{align}
As $\frac{9k^2p}{n} = np^2 \lam_Q$ and $\frac{36k^3}{p^2n^4} = \frac{2\lam_Q^2}{k} - \lam_R - \lam_S$, we may simplify and rewrite this as
\begin{align}
\log \big(&\E_{\nufix}[\cP(F)]\big)-\log \left(\frac{k!}{{n\choose 3}^kp^{3k}}W\right)\\
& = (Q+2R+2S - \lam_Q)(np^2) - \lam_Q + \frac{2\lam_Q^2}{k} - \lam_R - \lam_S + O(p(Q+R+S))+o(1)
\end{align}
where we use the fact that $e_F = 3k - Q - 2R - 2S$.
For notational convenience, we define
\begin{align}
    A := \log \left(\E_{\nufix}\left[\cP(F)\right]\right)-\log \left(\frac{k!}{{n\choose 3}^kp^{3k}}W\right)+\lambda_Q+\lambda_R+\lambda_S\ .
\end{align}

Our aim is to compute $\sum_{F \in \cF_{k,\typ}}p^{e_F}\cP(F) = \sum_{(Q, R, S)} \sum_{F \in \cF_{k, Q, R, S, \typ}} p^{e_F}\cP(F)$. We begin with the inner summation. As $e_F = 3k - Q - 2R - 2S$ is fixed, we may pull it out of the summation and rewrite 
\begin{align*} \sum_{F \in \cF_{k,Q,R,S,\typ}}\cP(F) &= |\cF_{k,Q,R,S,\typ}|\sum_{F \in \cF_{k,Q,R,S,\typ}} \nufix(F) \cP(F) \\
    &= |\cF_{k,Q,R,S,\typ}| \E_{\nufix} [\cP(F)]\ .
\end{align*}
By definition, we have $\frac{1}{W}\E_{\nufix} [\cP(F)] = \frac{k!}{{n \choose 3}^kp^{3k}} \exp(A - \lam_Q - \lam_R - \lam_S)$.

To estimate $|\cF_{k,Q,R,S,\typ}|$, recall from \zcref[S]{lemma: indept-conditional} that, uniformly over admissible $(Q, R, S)$, independently embedded components form a valid configuration of $\cF_{k, Q, R, S, \typ}$ with probability $1-o(1)$. Counting the possible component embeddings and dividing by the number of preimages for each configuration gives

\begin{align}
    |\cF_{k,Q,R,S,\typ}| &\sim {{n\choose 3}\choose k-2Q-3R-3S}6^Q60^R10^S{{n\choose 4}\choose Q}{{n\choose 5}\choose R}{{n\choose 5}\choose S}\ . \label{eq:number-typical}
\end{align}

Let $c:=2Q+3R+3S$. This gives us that
\begin{align}
    &\frac{1}{W}\sum_{F\in\cF_{k,Q,R,S, \typ}} p^{e_F}\cP(F) = \frac{k!}{{n \choose 3}^k}p^{e_F-3k} |\cF_{k,Q,R,S,\typ}|\exp(A - \lam_Q - \lam_R - \lam_S)\\
    &=(1+o(1))\frac{p^{-Q-2R-2S}k!\left(\frac{n^3}{6}\right)^{-c}6^Q60^R10^S}{(k-c)!}\cdot \frac{\left(\frac{n^4}{24}\right)^Q}{Q!} \cdot \frac{\left(\frac{n^5}{120}\right)^R}{R!}\cdot \frac{\left(\frac{n^5}{120}\right)^S}{S!}\exp\left(A-\lambda_Q-\lambda_R-\lambda_S\right)\\
    &= (1+o(1))\frac{k!}{(k-c)!}\left(\frac{\left(\frac{\lam_Q}{k^2}\right)^Qe^{-\lambda_Q}}{Q!}\right)\left(\frac{\left(\frac{\lam_R}{k^3}\right)^Re^{-\lambda_R}}{R!}\right)\left(\frac{\left(\frac{\lam_S}{k^3}\right)^Se^{-\lambda_S}}{S!}\right)\exp\left(A\right)\ .\label{eq:comb-count}
\end{align}
We want to take the sum of the above expression over all possible values of $Q, R, S$. 

If $k = O(1)$, then $\lam_Q = o(1), \lam_R = o(1)$, and $\lam_S = o(1)$, so the claim follows immediately from \eqref{eq:comb-count}. Thus, we may assume $k \to \infty$.

Let $w = \min\left\{\frac{1}{n^{1/3}p}, \frac{\sqrt{k}}{\log n}\right\}$ and let 
$$\cI = \{(Q,R,S) \in \Z_{\geq 0}^3 : 2Q+3R+3S\leq \frac{k}{\log n},\ |Q-\lambda_Q|\leq w,\ R\leq w,\ S\leq w\}\ ,$$ 
Let $\cF_{\cI} = \{F \in \cF_{k, \typ}\ |\ (Q(F), R(F), S(F)) \in \cI\}$. 
We will show that configurations corresponding to tuples outside of $\cI$ have negligible contribution.

Let $$\cB_1 := \left\{F \in \cF_{k,\typ} \mid 2Q(F)+3R(F)+3S(F)> \frac{k}{\log n}\right\}$$ and let $\overline{\cI}_1 = \{(Q, R, S)\ |\ (Q, R, S) = (Q(F), R(F), S(F)) \text{ for some } F \in \cB_1\}$. For $(Q, R, S) \in \overline{\cI}_1$, let $y = \max\{Q, R, S\}$ and note that $y \geq \frac{k}{8 \log n}$. By \eqref{eq:comb-count} and the inequality 
$$\frac{k!}{(k-2Q-3R-3S)!} \leq k^{2Q+3R+3S}\ ,$$
we have
\begin{align}
  \frac{1}{W} \sum_{F\in \cB_1} p^{e_F}\cP(F)
   &\leq (1+o(1)) \sum_{(Q, R, S) \in \barI_1} \rho^{\Pois}_{\lam_Q}(Q) \rho^{\Pois}_{\lam_R}(R) \rho^{\Pois}_{\lam_S}(S) \exp(A)\ .
\end{align}
Since $\frac{\lam_Q\log n}{k} = o(1)$ and $\lam_Q \geq \lam_R, \lam_S$ for sufficiently large $n$, we have $y > \lam_Q \geq \lam_R, \lam_S$; this implies $\rho^{\Pois}_{\lam_Q}(y) \geq \rho^{\Pois}_{\lam_R}(y)$ and $\rho^{\Pois}_{\lam_Q}(y) \geq \rho^{\Pois}_{\lam_S}(y)$. Since each Poisson PMF is bounded above by 1, we thus have
$$\rho^{\Pois}_{\lam_Q}(Q) \rho^{\Pois}_{\lam_R}(R) \rho^{\Pois}_{\lam_S}(S) \leq \rho_{\lam_Q}^{\Pois}(y)\ .$$
From the definition of $A$, 
$$A = (Q+2R+2S - \lam_Q)np^2 + \frac{2\lam_Q^2}{k} + O(p(Q+R+S)) + o(1)\ .$$
Since $Q, R, S \leq y$ and $y \geq \frac{k}{8 \log n}$, we can conclude $A = o(y)$ uniformly over $\overline{\cI}_1$. Therefore,
$$\frac{1}{W} \sum_{F\in \cB_1} p^{e_F}\cP(F) \leq (1+o(1))k^2 \sum_{y \geq \frac{k}{8 \log n}}\rho_{\lam_Q}^{\Pois}(y)\exp(o(y))\ .$$
If $k \geq 8 \log n$, using $\rho_{\lam_Q}^{\Pois}(y) \leq \left(\frac{e\lam_Q}{y}\right)^y$ and $\frac{\lam_Q}{y} \leq \frac{8 \lam_Q \log n}{k} = O(np^2\log n) = o(1)$ following from Assumption~\ref{assumption1}, it follows that
$$\frac{1}{W} \sum_{F\in \cB_1} p^{e_F}\cP(F) = o(1)\ .$$
Else, $k < 8 \log n$ implies $\lam_Q = O\left(\frac{k^{5/3}}{n}\right)$, so $k^2\Pro_{Q \sim \Pois(\lam_Q)}[Q \geq 1] = O(k^2\lam_Q) = o(1)$ and the same conclusion holds.

We next consider tuples such that $c=2Q+3R+3S \leq \frac{k}{\log n}$. From \eqref{eq:comb-count}, the contribution of such tuples can be written as
$$(1+o(1))\rho_{\lam_Q}^{\Pois}(Q)\rho_{\lam_R}^{\Pois}(R)\rho_{\lam_S}^{\Pois}(S)\exp(B)$$
where $B:= A + \log \frac{k!}{k^c(k-c)!}$. We then use the bound
\begin{align*}
\log\frac{k!}{k^c(k-c)!} &=\sum_{j=0}^{c-1}\log\left(1-\frac{j}{k}\right)\\
&\leq-\frac{1}{k}\sum_{j=0}^{c-1}j\\
&=-\frac{c(c-1)}{2k}.
\end{align*}
Since $c \geq 2Q$, it follows that 
$$-\frac{c(c-1)}{2k}\leq-\frac{2Q^2}{k}+\frac{Q}{k}\ .$$

Now let $\cB_2$ be the set of $F \in \cF_{k,\typ} \setminus \cB_1$ (meaning $c \leq \frac{k}{\log n}$) such that $Q(F) \geq \lam_Q - w$ but at least one of $Q(F)>\lambda_Q+w$, $R(F)>w$, or  $S(F)>w$ holds. Let $\barI_2$ be the set of corresponding $(Q,R,S)$. 

Let $x:=Q-\lambda_Q$. We partition $\barI_2$ into three sets,
$$\barI_{2,Q}:=\{(Q,R,S)\in\barI_2:Q>\lambda_Q+w\}\ ,$$
$$\barI_{2,R}:=\{(Q,R,S)\in\barI_2:Q\leq\lambda_Q+w,\ R>w\}\ ,$$
and
$$\barI_{2,S}:=\{(Q,R,S)\in\barI_2:Q\leq\lambda_Q+w,\ R\leq w,\ S>w\}\ .$$

First consider $\barI_{2,Q}$. From the computations above,
$B\leq A-\frac{2Q^2}{k}+\frac{Q}{k}$, 
and hence, since $Q=\lambda_Q+x$,
$$B\leq xnp^2+2(R+S)np^2-\frac{4\lambda_Qx+2x^2}{k}+\frac{Q}{k}+O(p(Q+R+S))+o(1)\ .$$
In particular, the $Q$-dependent positive terms are $O((np^2+p+1/k)x)+o(1)$.

If $0<x\leq\lam_Q$, the standard Poisson bound gives
$$\rho_{\lam_Q}^{\Pois}(Q)\leq\exp\left(-\Theta\left(\frac{x^2}{\lambda_Q}\right)\right)\ .$$
Since $x\geq w$ and $\frac{\lambda_Q(np^2+p+1/k)}{w}=o(1)$, 
the positive $Q$-dependent contribution from $B$ is $o(x^2/\lambda_Q)$. Thus
$$\rho_{\lambda_Q}^{\Pois}(Q)\exp(B)\leq\exp\left(-\Theta\left(\frac{x^2}{\lambda_Q}\right)\right)\exp\left(O((np^2+p)(R+S))\right)\ .$$
If $x>\lambda_Q$, then the Poisson upper-tail bound instead gives
$$\rho_{\lambda_Q}^{\Pois}(Q)\leq\exp(-\Theta(Q))\ ,$$
and since $np^2+p=o(1)$, the $Q$-dependent terms in $B$ are $o(Q)$. Hence
$$\rho_{\lambda_Q}^{\Pois}(Q)\exp(B)\leq\exp(-\Theta(Q))\exp\left(O((np^2+p)(R+S))\right)\ .$$

For the $R$ and $S$ variables we use their Poisson probabilities without discarding the normalization. If $R\leq w$, then
$$(np^2+p)R=o(1)\ .$$
If $R>w$, then $\lambda_R/w=o(1)$, so
$R/\lambda_R\to\infty$ uniformly, and
$$\rho_{\lambda_R}^{\Pois}(R)\exp\left(O((np^2+p)R)\right)\leq\exp\left(-\Theta\left(R\log\frac{R}{\lambda_R}\right)\right)\ .$$
The analogous statement holds for $S$.
Consequently the sums over $R$ and $S$ contribute only a bounded
factor, uniformly in $Q$.

Since $w^2/\lambda_Q\to\infty$, it follows that
$$\sum_{(Q,R,S)\in\barI_{2,Q}}\rho_{\lambda_Q}^{\Pois}(Q)\rho_{\lambda_R}^{\Pois}(R)\rho_{\lambda_S}^{\Pois}(S)\exp(B)=o(1)\ .$$

We next consider $\barI_{2,R}$. Here $-w\leq x\leq w$.
Using $$w(np^2+p)=o(1),\qquad \frac{\lambda_Qw}{k}=o(1),\qquad \frac{w^2}{k}=o(1)\ ,$$
the entire $Q$-dependent contribution to $B$ is $o(1)$. Moreover, $R>w$ and $\lambda_R/w=o(1)$, so
$$\rho_{\lambda_R}^{\Pois}(R)\exp(B)\leq\exp\left(-\Theta\left(R\log\frac{R}{\lambda_R}\right)\right)\exp\left(O((np^2+p)S)\right)\ .$$
As above, summing over $Q$ and $S$ contributes only a bounded
factor, while the Poisson upper tail for $R$ is $o(1)$. Hence
$$\sum_{(Q,R,S)\in\barI_{2,R}}\rho_{\lambda_Q}^{\Pois}(Q)\rho_{\lambda_R}^{\Pois}(R)\rho_{\lambda_S}^{\Pois}(S)\exp(B)=o(1)\ .$$

The case $\barI_{2,S}$ is identical. We therefore conclude that
$$\sum_{(Q,R,S)\in\barI_2}\rho_{\lambda_Q}^{\Pois}(Q)\rho_{\lambda_R}^{\Pois}(R)\rho_{\lambda_S}^{\Pois}(S)\exp(B)=o(1)\ .$$

Lastly, let $\cB_3:=\{F\in\cF_{k,\typ}\setminus\cB_1: Q(F)<\lambda_Q-w\}$, 
and let $\barI_3$ be the set of corresponding $(Q,R,S)$ tuples.
If $\lambda_Q\leq w$, then $\cB_3$ is empty, so suppose
$\lambda_Q>w$. For notational convenience, redefine
$x:=\lambda_Q-Q$. 
Then for every $(Q,R,S)\in\barI_3$ we have
$w<x\leq\lambda_Q$.

Recall that $B\leq A-\frac{2Q^2}{k}+\frac{Q}{k}$. Since $Q=\lambda_Q-x$, the definition of $A$ gives
\begin{align*}
B&\leq -xnp^2+2(R+S)np^2+\frac{2\lambda_Q^2-2Q^2}{k}+\frac{Q}{k}+O(p(Q+R+S))+o(1)\\
&=-xnp^2+2(R+S)np^2+\frac{4\lambda_Qx-2x^2}{k}+\frac{Q}{k}+O(p(Q+R+S))+o(1).
\end{align*}
By the same argument used in the analysis of $\barI_{2,Q}$, the terms involving $R$ and $S$ contribute a bounded total sum.
Moreover, $p\lambda_Q=o(1)$ and $\lambda_Q/k=o(1)$, so
$$B\leq C\left(np^2+p+\frac{\lambda_Q}{k}\right)x+o(1)$$
for some constant $C>0$. By our choice of $w$,
$$\frac{\lambda_Q}{w}\left(np^2+p+\frac{\lambda_Q}{k}\right)=o(1)\ .$$
Since $x\geq w$, it follows from the previous computations that
$$B=o\left(\frac{x^2}{\lambda_Q}\right)+o(1)$$

For $0\leq x\leq\lambda_Q$, the standard lower-tail bound for a
Poisson random variable gives
$$\Pro_{Q \sim \Pois(\lam_Q)}\left[Q\leq\lambda_Q-x\right]\leq\exp\left(-\frac{x^2}{2\lambda_Q}\right).$$
Using our bound on $B$ and summing first over $R$ and $S$, whose Poisson
probabilities have total mass at most $1$, we therefore obtain
$$\sum_{(Q,R,S)\in\barI_3}\rho_{\lambda_Q}^{\Pois}(Q)\rho_{\lambda_R}^{\Pois}(R)\rho_{\lambda_S}^{\Pois}(S)\exp(B) \leq\exp\left(-\Theta\left(\frac{w^2}{\lambda_Q}\right)\right)=o(1)$$
where the last equality follows from $\frac{w^2}{\lambda_Q}\longrightarrow\infty$. 
Thus the total contribution from $\cB_3$ is $o(1)$.

Putting everything together, we conclude that 
$$\frac{1}{W}\sum_{F \in \cF_{k, \typ} \setminus \cF_{\cI}} p^{e_F} \cP(F) = o(1)\ .$$
Moreover, $\rho^{\Pois}\left(2Q + 3R + 3S > \frac{k}{\log n}\right) = O\left(\frac{\lam_Q \log n}{k}\right) = o(1)$ by Markov's inequality. 
Together with $\frac{\lam_Q}{w^2} = o(1)$, $\frac{\lam_R}{w} = o(1)$, $\frac{\lam_S}{w} = o(1)$, this yields $\rho^{\Pois}(\cI^c)=o(1)$. 

For $(Q, R, S)\in \cI$, 
we have that $\log\frac{k!}{k^c(k-c)!}=-\frac{c(c-1)}{2k}+O\left(\frac{c^3}{k^2}\right)$. By the definition of $\cI$ and our previous estimates, $\frac{c}{k} = o(1)$ and $\frac{c^3}{k^2} = o(1)$. Thus
\begin{align}
    B&= A-\frac{c^2}{2k}+o(1)\\
    &=  A-\frac{2Q^2}{k}+o(1)\\
    &= (Q+2R+2S - \lam_Q)(np^2)  + \frac{2\lam_Q^2}{k}-\frac{2Q^2}{k}  + O(p(Q+R+S))+o(1)\\
    &= (Q - \lam_Q)(np^2)  + \frac{2(\lam_Q^2-Q^2)}{k} +o(1)
\end{align}
Indeed, $(R+S)np^2\leq 2wnp^2=o(1)$ and $p(Q+R+S)=O(p\lambda_Q+pw)=o(1)$. Furthermore, $|(Q-\lambda_Q)np^2|\leq wnp^2=o(1)$ and $\frac{|\lambda_Q^2-Q^2|}{k}=\frac{|Q-\lambda_Q|(\lambda_Q+Q)}{k}=O\left(
\frac{\lambda_Qw}{k}+\frac{w^2}{k}\right)=o(1)$. As a result,
$$B = o(1)$$
uniformly over $(Q,R,S) \in \cI$.

It follows from \eqref{eq:comb-count} that, uniformly over $(Q,R,S)\in\cI$,
$$\frac{1}{W}\sum_{F\in\cF_{k,Q,R,S,\typ}}p^{e_F}\cP(F)=(1+o(1))\rho_{\lambda_Q}^{\Pois}(Q)\rho_{\lambda_R}^{\Pois}(R)\rho_{\lambda_S}^{\Pois}(S)\ .$$
We have shown that the contribution of all tuples outside of $\cI$ is $o(W)$. Hence,
\begin{align*}
\sum_{F\in\cF_{k,\typ}}p^{e_F}\cP(F)&=W\sum_{(Q,R,S)\in\cI}(1+o(1))\rho_{\lambda_Q}^{\Pois}(Q)\rho_{\lambda_R}^{\Pois}(R)\rho_{\lambda_S}^{\Pois}(S)+o(W)\\
&=(1+o(1))W,
\end{align*}
This proves the second statement of the claim.

Normalizing by the preceding statement, for every $(Q,R,S)\in\cI$, we have
$$\nu_{p,k,\typ}(Q(F)=Q,R(F)=R,S(F)=S)=(1+o(1))\rho_{\lambda_Q}^{\Pois}(Q)\rho_{\lambda_R}^{\Pois}(R)\rho_{\lambda_S}^{\Pois}(S)$$
uniformly over $\cI$. We have also shown that $\nu_{p,k, \typ}(\cI^c) = o(1)$ and $\rho^{\Pois}(\cI^c) = o(1)$. 
Therefore
\begin{align}
   \|(Q, R, S)_{\nu_{p,k,\typ}} - \rho^{\Pois}\|_{\TV} = o(1) 
    \label{eq:poisson-dist}
\end{align}
proving the first statement.
\end{proof}

\section{Proof of Theorem~\ref{main theorem}}\label{section: final formulas}

We now have everything we need to prove \zcref[S]{main theorem}. Recall that we defined $\mu = {n \choose 3}p^3$.

\begin{proof}[Proof of Theorem~\ref{main theorem}]

We start by showing that the ratio $\Pro[X=k]/\Pro[X\le k]$ is bounded away from $0$ uniformly over the range of $k$. Recall that \zcref[S]{lemma: forbidden subgraphs} tells us that 
$$\Pro[X \leq k] = (1+o(1))\Pro[\Tri(G) \in \cF_{\leq k, \typ}]\ .$$
Thus, it suffices to determine the asymptotics of the right-hand expression.
Viewing $W = W_{n,p}(k)$ as a function of $k$, Lemma \ref{lemma: typ result} gives
\begin{align}
       & \Pro[\Tri(G)\in \cF_{k, \typ}]= (1+o(1)) W(k).
        \label{eq:=k}
\end{align}
By the definition \eqref{eq:Wdefinition}, for every $1 \leq i \leq k$, it holds that
$$\frac{W(i-1)}{W(i)} = \frac{i}{\mu} \exp\left(3np^2 - 6n^2p^4 + (i^2 - (i-1)^2)\left(\frac{9p}{n} - \frac{9}{pn^2}\right) + (i^3 - (i-1)^3)\frac{36}{p^2n^4}\right)\ .$$
By Assumption~\ref{assumption1}, each term in the exponential is $o(1)$, and since we assume $k \leq (1-\epsilon)\mu$ for $\epsilon > 0$, we have
$\frac{W(i-1)}{W(i)} = (1+o(1))\frac{i}{\mu} \leq (1+o(1))\frac{k}{\mu} \leq 1 - \frac{\epsilon}{2}$
which implies
$$W(i) \leq \left(1-\frac{\epsilon}{2}\right)^{k-i}W(k)\ .$$
This is enough to show what we want, as
\begin{align}\Pro[X \leq k] = (1+o(1)) \sum_{i \leq k} \Pro[\Tri(G) \in \cF_{i, \typ}] 
    &= \sum_{i \leq k} (1+o(1))W(i)\\ 
    &\leq \left(\frac{2}{\epsilon}+o(1)\right)W(k)\\
    &\leq \left(\frac{2}{\epsilon}+o(1)\right)\Pro[X=k]\ ,
\end{align}
which tells us 
$$\Pro[X=k \mid X\leq k] = \frac{\Pro[X=k]}{\Pro[X \leq k]} \geq \frac{\epsilon}{2}+o(1) > 0\ .$$
 \newpage
 Thus, $\Pro[\{X = k\} \wedge \{\Tri(G) \in \cF_k \setminus \cF_{k, \typ}\}] \leq \Pro[\{X \leq k\} \wedge \{\Tri(G) \in \cF_{\leq k} \setminus \cF_{\leq k, \typ}\}] = o(\Pro[X \leq k]) = o(\Pro[X=k])$, so
\begin{align}
    \Pro\left[X=k\right]&= (1+o(1)) \Pro\left[\Tri(G)\in\cF_{k,\typ}\right]= (1+o(1))
    W \label{eq: typ_restrict}
\end{align}
as desired.
\end{proof}

\section{Distribution and Sampling}
\label{sec: sampling}
In this section we use the previous results to study the distribution $\nu_{p,k}$ 
of the triangle configuration of $G = G(n,p)$ conditioned on the event $\{X(G) = k\}$, 
which in turn allows us to efficiently sample such graphs.

\subsection{Distribution of Triangles}

In this subsection, we prove \zcref[S]{theorem: dist} and \zcref[S]{theorem: typical-char}. Recall that $\nu_{p,k}$ is the distribution of the triangle configuration of $G(n,p)$ conditioned on having $k$ triangles and $\nu_{p, k}^{\tilt}$ is the distribution of a $k$-triangle configuration $F$ chosen with probability proportional to $p^{e_F}$. 
We write $\nu_{p,k,\elem}^{\tilt}$ and $\nu_{p,k, \typ}^{\tilt}$ for the conditional measures $\nu_{p,k}^{\tilt}(\cdot \mid F \in \cF_{k, \elem})$ and $\nu_{p,k}^{\tilt}(\cdot \mid F \in \cF_{k, \typ})$, respectively.
Our goal is to show that
\begin{align}
    \| \nu_{p,k}- \nu_{p, k, \elem}^{\tilt}\|_{\TV}=o(1) \,.
\end{align} 
We first show that it suffices to compare to the distribution on typical configurations, rather than elementary configurations.
\begin{lemma}\label{lemma:normal-is-typical}
    $$\|\nu_{p,k,\elem}^{\tilt} - \nu_{p,k,\typ}^{\tilt}\|_{\TV} = o(1)\ .$$
\end{lemma}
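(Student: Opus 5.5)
Since every configuration in $\cF_{k,\typ}$ has triangle-dual components of order at most $3$ (Types~\ref{atypicaltype1} and~\ref{atypicaltype2} are excluded), and these are exactly the elementary component types, we have $\cF_{k,\typ}\subseteq\cF_{k,\elem}$. Consequently $\nu_{p,k,\typ}^{\tilt}$ is just $\nu_{p,k,\elem}^{\tilt}$ conditioned on the event $\cF_{k,\typ}$. For a measure conditioned on a subset, the total variation distance equals the mass of the complement, so
\begin{align}
\|\nu_{p,k,\elem}^{\tilt}-\nu_{p,k,\typ}^{\tilt}\|_{\TV}=\nu_{p,k,\elem}^{\tilt}\left(\cF_{k,\elem}\setminus\cF_{k,\typ}\right).
\end{align}
The plan is therefore to show that
\begin{align}
\sum_{F\in\cF_{k,\typ}}p^{e_F}=(1-o(1))\sum_{F\in\cF_{k,\elem}}p^{e_F}.
\end{align}

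To prove this, partition $\cF_{k,\elem}$ as the disjoint union of the classes $\cF_{k,Q,R,S}$ over all $(Q,R,S)$ with $2Q+3R+3S\le k$. On each class $e_F=3k-Q-2R-2S$ is constant, so it suffices to show, uniformly in $(Q,R,S)$,
\begin{align}
|\cF_{k,Q,R,S,\typ}|\ge(1-o(1))\,|\cF_{k,Q,R,S}|.
\end{align}
Summing this bound over $(Q,R,S)$ then gives the displayed inequality.

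For the class-wise bound we reuse the independent embedding model $\tnu=\tnu^{\ind}_{k,Q,R,S}$ from Lemma~\ref{lemma: indept-conditional}. The preimage count there applies verbatim to every $F\in\cF_{k,Q,R,S}$, not only to typical ones: each such $F$ has exactly $t!\,Q!\,R!\,S!\prod_i|\operatorname{Aut}(H_i)|$ preimages $\boldsymbol\phi$. Hence the ratio $|\cF_{k,Q,R,S,\typ}|/|\cF_{k,Q,R,S}|$ equals $\tnu(\cE)/\tnu(\mathcal T(\boldsymbol\phi)\in\cF_{k,Q,R,S})$. This ratio is at least $\tnu(\cE)$, which is $1-o(1)$ uniformly in $(Q,R,S)$ by Lemma~\ref{lemma: indept-conditional}.

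The only point requiring care is checking that the preimage count really is independent of typicality. The argument in Lemma~\ref{lemma: indept-conditional} uses only that the configuration is a valid $k$-triangle configuration with the prescribed component types, so it does not depend on typicality. Given that, the remaining steps are bookkeeping.
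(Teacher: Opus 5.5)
Your proposal is correct and is essentially the paper's proof. The ratio $|\cF_{k,Q,R,S,\typ}|/|\cF_{k,Q,R,S}|$ you compute is exactly the paper's $\tnu(\cE\mid\cL)\ge\tnu(\cE)=1-o(1)$, where $\cL$ is the event that the independently embedded components form a configuration in $\cF_{k,Q,R,S}$; your observation that the preimage count $t!\,Q!\,R!\,S!\prod_i|\operatorname{Aut}(H_i)|$ does not use typicality is precisely the ``same argument'' the paper invokes to identify $\tnu(\cdot\mid\cL)$ with the uniform measure on $\cF_{k,Q,R,S}$, before concluding via $\cF_{k,\typ}\subseteq\cF_{k,\elem}$ as you do.
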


\begin{proof}

Let
\[
    \cI_k
    :=\bigl\{(Q,R,S)\in \Z_{\geq 0}^3:
    2Q+3R+3S\leq k\bigr\}.
\]
and let 
$$\nu_{p,k,Q,R,S,\mathrm{elem}}^{\tilt} :=\nu_{p,k,\mathrm{elem}}^{\tilt}(\cdot \mid Q(F) = Q, R(F) = R, S(F) = S)\ .$$
For fixed $Q, R, S$, note that $\nu_{p,k,Q,R,S,\mathrm{elem}}^{\tilt}$ is uniform over $\cF_{k,Q,R,S,\mathrm{elem}}$.
Recall the distribution $\tnu = \tnu_{k,Q,R,S}^{\ind}$ defined in Section~\ref{subsec:fixed-component-counts} on embeddings $\mathcal{T}(\boldsymbol{\phi})$ of $Q$ diamonds, $R$ piglets, $S$ 3-books, and $k-2Q-3R-3S$ triangles placed independently and uniformly at random. Let $\cL$ be the event $\{\mathcal T(\boldsymbol{\phi}) \in \cF_{k,Q,R,S,\elem}\}$ that the resulting configuration is a valid elementary configuration, and recall that $\cE$ is the event that the resulting configuration is a valid typical configuration. Then $\cE \subseteq \cL$, and by \zcref[S]{lemma: indept-conditional}, $\tnu(\cE \mid \cL) \geq \tnu(\cE) = 1-o(1)$ uniformly in $Q, R, S$. By the same argument as in \zcref[S]{lemma: indept-conditional}, the distribution $\tnu(\cdot \mid \cL)$ is the same as that of $\nu_{p,k,Q,R,S,\mathrm{elem}}^{\tilt}$, and so
$$\nu_{p,k,Q,R,S,\mathrm{elem}}^{\tilt}(\cF_{k,\typ}) = 1-o(1)$$
uniformly over $(Q, R, S) \in \cI_k$, which implies
$$\nu_{p,k,\elem}^{\tilt}(\cF_{k,\typ}) = 1-o(1)\ .$$
Since $\cF_{k,\typ} \subseteq \cF_{k,\elem}$, we have $\nu_{p,k,\typ}^{\tilt} = \nu_{p,k,\elem}^{\tilt}(\cdot \mid \cF_{k,\typ})$. Therefore,
$$\|\nu_{p,k,\elem}^{\tilt} - \nu_{p,k,\typ}^{\tilt}\|_{\TV} = \nu_{p,k,\elem}^{\tilt}(\cF_{k,\typ}^c) = o(1)\ .$$
\end{proof}

We proceed to proving \zcref[S]{theorem: dist}. To do so, we will use a standard inequality for total variation distance. Suppose $\mu$ and $\mu'$ are measures that can be written as $\mu = \sum_i w_i \mu_i$ and $\mu' = \sum_i w_i' \mu_i'$ for probability vectors $(w_i)_i, (w_i')_i$ and measures $(\mu_i)_i, (\mu_i')_i$. By the triangle inequality and joint convexity of total variation distance,
\begin{equation}\label{eqn:TV-convexity}
    \|\mu - \mu'\|_{\TV} \leq \frac{1}{2}\sum_i |w_i - w_i'| + \sum_i w_i\|\mu_i - \mu_i'\|_{\TV} \ .
\end{equation}
 
\begin{proof}[Proof of Theorem~\ref{theorem: dist}]
By the triangle inequality,
$$\|\nu_{p,k} - \nu_{p,k,\elem}^{\tilt}\|_{\TV} \leq \|\nu_{p,k} - \nu_{p,k,\typ}\|_{\TV} + \|\nu_{p,k,\typ} - \nu_{p,k,\typ}^{\tilt}\|_{\TV}\ + \|\nu_{p,k,\typ}^{\tilt} - \nu_{p,k,\elem}^{\tilt}\|_{\TV}.$$
By the definition of total variation distance, $\|\nu_{p,k} - \nu_{p,k,\typ}\|_{\TV} = \nu_{p,k}(\cF_{k,\typ}^c)$ which is $o(1)$ by the proof of \zcref[S]{main theorem}.
By \zcref[S]{lemma:normal-is-typical}, $\|\nu_{p,k,\typ}^{\tilt} - \nu_{p,k,\elem}^{\tilt}\|_{\TV} = o(1)$ as well. It remains to bound $\|\nu_{p,k,\typ} - \nu_{p,k,\typ}^{\tilt}\|_{\TV}$.

Observe that $\nu_{p,k,\typ}^{\tilt}$ conditioned on $Q, R, S$ is exactly $\nu_{k, Q, R, S,\typ}^{\unif}$, the uniform distribution on $\cF_{k,Q,R,S,\typ}$. Let $\nu_{p,k,Q,R,S,\typ} = \nu_{p,k,\typ}(\cdot \mid Q,R,S)$. 
By \eqref{eqn:TV-convexity}, we then have

\begin{align}
    &\|\nu_{p,k,\typ} - \nu_{p,k,\typ}^{\tilt}\|_{\TV} \leq \|(Q, R, S)_{\nu_{p,k,\typ}} - (Q, R, S)_{\nu_{p,k,\typ}^{\tilt}}\|_{\TV} \\
    &\ \ \ +\sum_{2Q'+3R'+3S'\leq k} \nu_{p,k,\typ}(Q=Q',R=R',S=S') \|\nu_{p,k,Q',R',S',\typ} - \nu_{k,Q',R',S',\typ}^{\unif}\|_{\TV} \label{ineq:tilttyp}
\end{align}
where we recall that $(Q, R, S)_{\nu_{p,k,\typ}}$ denotes the distribution of $(Q(F), R(F), S(F))$ induced by $F \sim \nu_{p,k,\typ}$ and similarly for $(Q, R, S)_{\nu_{p,k,\typ}^{\tilt}}$.
We will show that both terms on the right-hand side are $o(1)$.

For the first term, recall that $\rho^{\Pois}$ denotes the product Poisson distribution over $(Q, R, S)$. By \zcref[S]{lemma: typ result},
$$\|(Q,R,S)_{\nu_{p,k,\typ}} - \rho^{\Pois}\|_{\TV} = o(1)\ .$$
On the other hand, we claim that
\begin{equation}\label{eq:tilttoPoisson}
\|\rho^{\Pois} - (Q,R,S)_{\nu_{p,k,\typ}^{\tilt}}\|_{\TV} = o(1)
\end{equation}
so by the triangle inequality, the first term of \eqref{ineq:tilttyp} is $o(1)$.

To see this, we essentially repeat the arguments from the proof of \zcref[S]{lemma: typ result} for the tilted measure. Indeed, the tilted weight of $|\cF_{k,Q,R,S,\typ}|$ is, by \eqref{eq:number-typical},
$$p^{3k-Q-2R-2S}|\cF_{k,Q,R,S,\typ}| = (1+o(1))C_{n,p,k}\rho_{\lam_Q}^{\Pois}(Q)\rho_{\lam_R}^{\Pois}(R)\rho_{\lam_S}^{\Pois}(S)\exp(\widetilde{B})$$
where $\widetilde{B} = \frac{2\lam_Q^2}{k} + \log \frac{k!}{k^c(k-c)!}$ for $c = 2Q+3R+3S$. Defining the same sets $\cI, \cB_1, \cB_2, \cB_3$ and repeating the same estimates, we can show that $\widetilde{B} = o(1)$ and the product Poisson distribution assigns mass $1-o(1)$ to the set $\cI$. Normalization then provides us with the desired statement \eqref{eq:tilttoPoisson}.
 
It remains to bound the second term of \eqref{ineq:tilttyp}. Fix $Q'$, $R'$, and $S'$ such that $2Q'+3R'+3S'\leq k$, and for the remainder of the proof let 
$$\nu = \nu_{p,k,Q',R',S',\typ},\ \qquad \nu^U = \nu_{k,Q',R',S',\typ}^{\unif},\ \qquad \cF = \cF_{k,Q',R',S',\typ}.$$  
Since $e_F$ is constant over $\cF$, we have that
$$\nu(F) = \frac{\cP(F)}{\sum_{F' \in \cF} \cP(F')}\ .$$
Consequently,
\begin{align*} \|\nu - \nu^U\|_{\TV} &= \frac12 \sum_{F \in \cF} \left| \frac{\cP(F)}{\sum_{F' \in \cF} \cP(F')} - \frac{1}{|\cF|} \right| \\
&= \frac12 \sum_{F \in \cF} \frac{1}{|\cF|} \left|\frac{\cP(F)}{\sum_{F' \in \cF} \frac{1}{|\cF|} \cP(F')} - 1\right| \\
&= \frac12 \E_{\nu^U} \left|\frac{\cP(F)}{\E_{\nu^U}[\cP(F)]} - 1\right|
\end{align*}

Let $\lam = \frac{p}{2} - np^3$. By Lemma \ref{lemma:expectation-exchange},
$$\frac{\cP(F)}{\E_{\nu^U}[\cP(F)]} = (1+o(1))\exp(-\lam (D(F) - \E_{\nu^U}[D(F)]))$$
uniformly over the relevant values of $Q', R', S'$. Let 
$$Y(F) := \exp(-\lam (D(F) - \E_{\nu^U}[D(F)]))\ .$$
Applying Lemma \ref{lemma:expectation-exchange} for both $\lam$ and $2\lam$ gives
$$\E_{\nu^U}[Y] = \exp(\lam\E_{\nu^U}[D(F)])\E_{\nu^U}[\exp(-\lambda D(F))] = 1+o(1),\ \text{and}$$
$$\E_{\nu^U}[Y^2] = \exp(2\lam\E_{\nu^U}[D(F)])\E_{\nu^U}[\exp(-2\lambda D(F))] = 1+o(1)\ .$$
By Cauchy-Schwarz,
$$\E_{\nu^U}|Y - 1| \leq \sqrt{\E_{\nu^U}[(Y-1)^2]} = \sqrt{\E_{\nu^U}[Y^2] - 2\E_{\nu^U}[Y] + 1} = o(1)\ .$$
As a result,
$$\E_{\nu^U} \left|\frac{\cP(F)}{\E_{\nu^U}[\cP(F)]} - 1\right| \leq (1+o(1))\E_{\nu^U}|Y-1| +o(1) = o(1)\ .$$

Thus, $\|\nu - \nu^U\|_{\TV} = o(1)$ uniformly over the relevant values of $Q', R', S'$. The second term in \eqref{ineq:tilttyp} is then
\begin{align*}\sum_{2Q'+3R'+3S' \leq k} &\nu_{p,k,\typ}(Q=Q', R=R', S=S')\|\nu_{p,k,Q',R',S',\typ} - \nu_{k,Q',R',S',\typ}^{\unif}\|_{\TV} \\
&\leq \sup_{2Q'+3R'+3S' \leq k} \|\nu_{p,k,Q',R',S',\typ} - \nu_{k,Q',R',S',\typ}^{\unif}\|_{\TV} \\
&= o(1)
\end{align*}
which along with the bound on the $(Q,R,S)$ marginals finishes the proof.
\end{proof}

From this, we have a description of $\nu_{p,k}$ up to the threshold $n^{-2/3}$ as the tilted measure conditioned on typicality. We now prove \zcref[S]{theorem: typical-char} further characterizing the conditioned measure.

\begin{proof}[Proof of Theorem~\ref{theorem: typical-char}]
    We first prove Part~\ref{theorem: typical-char-part3}. Suppose $p = o(n^{-3/4})$ and $k$ is an integer satisfying $0 \leq k \leq (1-\epsilon) \E_p[X]$ for a fixed $\epsilon > 0$. We want to show that the tilted measure is close to the tilted measure conditioned on being elementary. As in the previous proof, by applying the triangle inequality and \zcref[S]{lemma:normal-is-typical}, it suffices to bound $\|\nu_{p,k,\typ}^{\tilt} - \nu_{p,k}^{\tilt}\|_{\TV}$.
    By definition of total variation distance, $\|\nu_{p,k,\typ}^{\tilt} - \nu_{p,k}^{\tilt}\|_{\TV}$ is exactly $\nu_{p,k}^{\tilt}(\cF_{k, \typ}^c)$, so it suffices to show that atypical triangle configurations are unlikely under the tilted measure.

    Given $F \in \cF_k$, let $N_v(F)$ be the number of components in $\dual{G_F}$ whose underlying graph in $G_F$ has exactly $v$ vertices (so, for example, diamond and $K_4$ components of $F$ are both counted by $N_4(F)$). We will show that $\sum_{v \geq 5} \E_{\nu_{p,k}^{\tilt}}[N_v] = o(1)$ and then separately analyze the edge-disjoint atypical components.

    Define the partition function 
    $$Z_k = Z_{p,k}^{\tilt} := \sum_{F \in \cF_k} p^{e_F}\ .$$
    Then
    $$\E_{\nu_{p,k}^{\tilt}}[N_v] = \frac{1}{Z_k}\sum_{F \in \cF_k} N_v(F)p^{e_F}\ .$$
    We may rewrite this as a sum over pairs $(F, C)$ where $F \in \cF_k$ and $C$ is a component of $\dual{G_F}$ corresponding to a subgraph $H_C$ of $G_F$ such that $v(H_C) = v$. Let $\cH_v$ be the set of labeled subgraphs $H$ on $v$ vertices that occur as the underlying graph of a connected component in $\dual{G_F}$. Deleting $\Tri(H_C)$ from $F$ results in a triangle configuration $F'$ with $k - |\Tri(H_C)|$ triangles, so we can define an injection from pairs $(F,C)$ to pairs $(F', H)$ for $H \in \cH_v$. 
    Moreover, $p^{e_F}$ factorizes as $p^{e(H)}p^{e_{F'}}$.
    \begin{align*}
        \E_{\nu_{p,k}^{\tilt}}[N_v] &= \frac{1}{Z_k}\sum_{(F,C)} p^{e_F}\\
        &\leq \frac{1}{Z_k} \sum_{(F', H)} p^{e(H)}p^{e_{F'}}\\
        &= \sum_{H \in \cH_v} p^{e(H)} \frac{Z_{k - |\Tri(H)|}}{Z_k}\ .
    \end{align*}
We bound the ratio of partition functions using a straightforward counting argument. Given a configuration with $k-t$ triangles, we can obtain a configuration with $k$ triangles by adding $t$ new mutually disjoint triangles. At each step, there are at least ${n-3k \choose 3} = \left(1 - O\left(\frac{k}{n}\right)\right){n \choose 3}$ choices of vertices for the next triangle, and each resulting configuration arises from at most $t!{k \choose t}$ such placements. 
\begin{align*}
    Z_k &\geq \frac{{n-3k \choose 3}^t}{t!{k \choose t}} p^{3t} Z_{k-t} \geq \left(\frac{(1-O(\frac{k}{n})){n \choose 3}p^3}{k}\right)^tZ_{k-t} \,.
\end{align*}
As $k \leq (1-\epsilon)\mu$ and $\frac{k}{n} = o(n^{-1/4})$, this gives us
$$\frac{Z_{k-t}}{Z_k} \leq (1-c(\epsilon))^t\ .$$
for some small constant $c(\epsilon) > 0$.

Lastly, we need to bound the size of $\cH_v$. Given $H \in \cH_v$, observe that there exists an ordering of the vertices of $H$, say $x_1, x_2, \dots, x_v$, such that we may build $H$ in the following way:
\begin{enumerate}
    \item $H_3$ is the triangle $x_1x_2x_3$ and is contained in $H$.
    \item For each $i > 3$, choose $a(i) < b(i) < i$ such that $x_ix_{a(i)}x_{b(i)}$ forms a triangle in $H$ and $x_{a(i)}x_{b(i)} \in E(H_{i-1})$.
    \item Let $H_i$ be obtained from $H_{i-1}$ by adding the vertex $x_i$ and the edges $x_ix_{a(i)}, x_ix_{b(i)}$, followed by all other edges $x_ix_j \in E(H)$ for $j < i$.
\end{enumerate}
By construction, $H_v = H$. It is clear that this is possible by looking at the triangle-dual graph: as $\dual{H}$ is connected, we may take a rooted spanning tree of $\dual{H}$ and expose it one node at a time so that each parent node is exposed before its children. Starting with the root triangle, we can order the vertices of $H$ in the order that they are exposed. Since every non-root triangle shares an edge with its parent, at most one new vertex is exposed at each step. 
For $\ell > 3$, let $T = x_ix_jx_{\ell} \in \Tri(H)$ where $i, j < \ell$ be the first exposed triangle containing $x_{\ell}$. Because its parent $T'$ was exposed before $T$, $x_{\ell} \notin V(H_{\ell-1})$, but $T$ and $T'$ share an edge, so $x_ix_j \in E(H_{\ell-1})$, as required for the second step of the construction. 

To bound the number of labeled subgraphs $H \in \cH_v$, we encode their unlabeled structure as an auxiliary rooted forest: let the edges $x_1x_2, x_1x_3, x_2x_3$ be three separate roots. For each $i > 3$,
\begin{enumerate}
    \item add a node labeled $x_i$ as the child of node $x_{a(i)}x_{b(i)}$;
    \item add nodes labeled $x_ix_{a(i)}$ and $x_ix_{b(i)}$ as the first and second children of $x_i$, respectively;
    \item for any subsequent edge of the form $x_ix_j$ for $j < i$, add a node labeled $x_ix_j$ as a child of $x_i$.
\end{enumerate}
The result is a rooted forest of order $e(H) + (v-3)$. By maintaining an ordering on the children of each node, we may also ensure that we can distinguish between nodes added in step 2 and those added in step 3. Letting $m = e(H) - (2v-3)$ be the total number of nodes added in step 3, the number of unlabeled rooted forests possible is at most $C^{v+m}$ for some $C > 0$. The number of ways to label the vertex nodes is at most $n^v$, and for each of the $m$ edge nodes added in step 3, there are at most $v$ choices for its label given its parent. Overall, this tells us there are at most $C^{v+m}n^vv^m$ elements of $\cH_v$ for a given value of $m$. Putting everything together, we obtain the bound

\begin{align*} \sum_{H \in \cH_v} p^{e(H)} &\leq \sum_{m \geq 0} C^{v+m}n^vv^m p^{2v-3+m}\\
&\leq (Cnp^2)^vp^{-3}\sum_{m \geq 0} (Cvp)^m\ .
\end{align*}
As $v \leq k+2$, we know that
$$pv \leq pk \leq (1-\epsilon)n^3p^4$$
which is $o(1)$ precisely when $p = o(n^{-3/4})$. For $n$ large enough, $Cvp \leq \frac12$ so $\sum_{m \geq 0} (Cvp)^m \leq 2$. Then
\begin{align*} \sum_{v \geq 5} \E_{\nu_{p,k}^{\tilt}}[N_v] \leq 2p^{-3}\sum_{v \geq 5} (Cnp^2)^v = O(n^5p^7) = o(1)\ .
\end{align*} 

The same argument does not extend to $v \geq 4$; indeed, $\E_{\nu_{p,k}^{\tilt}}[N_4] \leq C' n^4 p^5$ which is not $o(1)$ when $p \gg n^{-4/5}$. However, we can separately show that $K_4$ components are unlikely, by simply bounding
$$\E_{\nu_{p,k}^{\tilt}}[N_{K_4}] \leq {n \choose 4}p^6 \frac{Z_{k-4}}{Z_k} = o(1)\ .$$
Thus, by Markov's inequality and a union bound, whp under the tilted measure every component of $\dual{G_F}$ corresponds to a diamond or isolated triangle. 

To finish, we must show that the remaining atypical subgraphs whp do not appear. Recall that these consist of a vertex incident to $r = \lceil \frac{\log n}{6} \rceil$ edge-disjoint triangles (type \ref{atypicaltype3}) and a 4-cycle of vertex-connected triangles (type \ref{atypicaltype4}). Let $N_{\text{type 3}}(F)$ and $N_{\text{type 4}}$ be the number of copies of these subgraphs, respectively.

Let $A$ be the event that all triangle components of $G_F$ are triangles or diamonds. Let $\nu = \nu_{p,k}^{\tilt}(\cdot \mid A, Q(F) = Q)$ be the tilted distribution conditioned on having $Q$ diamonds, and let $\nu^{\ind}_{k,Q}$ be the product distribution that results from independently sampling $Q$ uniformly random diamonds and $k-2Q$ uniformly random triangles. By fixing $Q$, the measure $\nu$ is uniform on $\cF_{k,Q,0,0}$ and thus is equal to $\nu^{\ind}_{k,Q}$ conditioned on the event $B$ that the sampled components form a valid configuration in $\cF_{k,Q,0,0}$. By union bound, $\nu^{\ind}_{k,Q}(B^c) = O\left(\frac{k^2}{n^2} + \frac{k^3}{n^3}\right) = o(1)$ uniformly in $Q$, so any event with probability $o(1)$ under $\nu^{\ind}_{k,Q}$ also has probability $o(1)$ under $\nu$; thus, it suffices to show the remaining atypical subgraphs have $o(1)$ probability under $\nu^{\ind}_{k,Q}$.

As each triangle component has at most 4 vertices, the probability that a fixed set of $r$ components share a given vertex is at most $\left(\frac{4}{n}\right)^r$. Taking a union bound over all choices of $r$ components and all vertices, we have
$$\nu^{\ind}_{k,Q}(N_{\text{type 3}} > 0) \leq n{k \choose r}\left(\frac{4}{n}\right)^r = o(1)$$
by the definition of $r$ and since $k = o(n \log n)$.

Any subgraph of Type 4 must consist of two diamonds, a diamond and two triangles, or four triangles sharing vertices to form a $C_4$ triangle-intersection graph. By a similar computation, we have
$$\nu^{\ind}_{k,Q}(N_{\text{type }4} > 0) = O\left(\frac{k^2}{n^2} + \frac{k^3}{n^3} + \frac{k^4}{n^4}\right) = o(1)\ .$$
This proves \zcref[S]{theorem: typical-char}, Part~\ref{theorem: typical-char-part3}. 

Now to prove Part~\ref{theorem: typical-char-part1}, assume $p = o(n^{-4/5})$. We compare the tilted measure with the uniform measure. Recall from the proof of \zcref[S]{lemma: indept-conditional} that the distribution of $k$ independently embedded triangles conditioned on forming a valid triangle configuration gives exactly $\nu_k^{\unif}$. Thus,
$$\nu_{k}^{\unif}(Q = R = S = 0) = 1-o(1)\ .$$
We also know from the preceding argument that for $p = o(n^{-4/5})$, we have $\E_{\nu_{p,k}^{\tilt}}[N_4] = O(n^4p^5) = o(1)$ and $\sum_{v \geq 5} \E_{\nu_{p,k}^{\tilt}}[N_v] = o(1)$. Thus, with probability $1 - o(1)$ under the tilted measure, triangle components consist of isolated triangles. Conditioned on this event, $e_F = 3k$ is constant, so the tilted and uniform laws are identical. This proves $\|\nu_{p,k}^{\tilt} - \nu_k^{\unif}\|_{\TV} = o(1)$.  

For Part~\ref{theorem: typical-char-part2}, let $n^{-4/5} \ll p \ll n^{-3/4}$ and $k = \Theta(\mu)$. We look at the tilted and uniform measures on the event $\{Q = 0\}$. From the previous paragraph, $\nu_k^{\unif}(Q = 0) = 1-o(1)$. On the other hand, let $Z_{Q = q, \typ} = \sum_{F \in \cF_{k,q,0,0, \typ}} p^{e_F}$. Then 
$$\frac{Z_{Q=1, \typ}}{Z_{Q=0, \typ}} \sim \frac{1}{p}6{n \choose 4}\frac{{{n \choose 3} \choose k-2}}{{{n \choose 3} \choose k}} \sim \frac{9k^2}{pn^2} = \lam_Q\ .$$
Since $k =\Theta(\mu)$, we have $\lam_Q = \Theta(n^4p^5) \to \infty$ and thus
$$\nu_{p,k,\typ}^{\tilt}(Q = 0) \leq \frac{Z_{Q=0,\typ}}{Z_{Q=0,\typ}+Z_{Q=1,\typ}} = o(1)\ .$$
It follows from Part~\ref{theorem: typical-char-part3} and \zcref[S]{lemma:normal-is-typical} that $\|\nu_{p,k,\typ}^{\tilt} - \nu_{p,k}^{\tilt}\|_{\TV} = o(1)$, so $\{Q = 0\}$ witnesses $\|\nu_{p,k}^{\tilt} - \nu_k^{\unif}\|_{\TV} = 1-o(1)$. 

Finally, for Part~\ref{theorem: typical-char-part4}, we construct a family of graphs $\cB  \subset \cF_{k,\elem}^c$ with nontrivial tilted measure. Fix vertices $u, v$ and let $uv$ form an edge. Then choose a set $L \subset V \setminus \{u, v\}$ of $\ell$ vertices (where $\ell$ is to be determined), all of which are adjacent to both $u$ and $v$. Finally, take an equipartition $L_1 \cup L_2$ of $L$ in some canonical way (e.g. let the $\lceil \frac{\ell}{2}\rceil$ lowest-indexed vertices be in $L_1$ and the remainder in $L_2$) and place $\frac{k-\ell}{2}$ additional edges between $L_1$ and $L_2$. Here we assume $\ell$ is chosen to have the same parity as $k$.

The resulting graph has exactly $k$ triangles: $\ell$ of the form $uvx$ for $x \in L$, $\frac{k-\ell}{2}$ of the form $uxy$ for $x \in L_1, y \in L_2$, and $\frac{k-\ell}{2}$ of the form $vxy$ for $x \in L_1, y \in L_2$. Let $\cB$ be the collection of all $k$-triangle configurations obtained in this way, and let $Z_{\cB} = \sum_{F \in \cB}p^{e_F}$.

First, observe that the triangle-dual graph of any element in $\cB$ consists of a single large connected component, so $\cB \subseteq \cF_{k,\elem}^c$ as desired. There are ${n-2 \choose \ell}{\lfloor\ell^2/4\rfloor \choose (k-\ell)/2}$ choices for $L$ and the edges within it, and each graph has weight $p^{1+2\ell + (k-\ell)/2} = p^{1 + 3\ell/2 + k/2}$. Assuming $\ell = o(k)$ and $k = o(\ell^2)$, we apply Stirling's formula to get
$$\log Z_{\cB} = \frac{k}{2}\log \frac{\ell^2p}{k} + o(k \log n)$$

Now set $p = n^{-\alpha}$ for $\frac23 < \alpha < \frac34$ and $k =\Theta(\mu)$. Then $\mu = \Theta(n^{3-3\alpha})$. Additionally set $\ell = n^{\beta+o(1)}$ for $0 < \beta < 1$. Our requirements of $\ell = o(k)$ and $k = o(\ell^2)$ hold when $\frac{3-3\alpha}{2} < \beta < 3-3\alpha$. Substituting into the above expression, we obtain
$$\log Z_{\cB} = \frac{k}{2}((2\beta - \alpha) - (3-3\alpha))\log n + O(k) +  o(k \log n) = (\beta - \frac{3}{2} + \alpha + o(1))k \log n\ .$$
Thus, $Z_{\cB}$ is exponentially large when $\beta > \frac{3}{2} - \alpha$ (which implies $\beta > \frac{3-3\alpha}{2}$). Observe that the interval $(\frac32 - \alpha, 3-3\alpha)$ is nonempty precisely when $\alpha < \frac{3}{4}$, which is our claimed threshold.

We compare $Z_{\cB}$ to $Z_{\typ} = \sum_{F \in \cF_{k,\typ}}p^{e_F}$, which we may write as 
$$Z_{\typ} = \sum_{2Q + 3R + 3S \leq k} p^{3k-Q-2R-2S} |\cF_{k,Q,R,S,\typ}|\ .$$
Recall \eqref{eq:number-typical} tells us that
$$|\cF_{k,Q,R,S,\typ}| = (1+o(1)){{n\choose 3}\choose k-2Q-3R-3S}6^Q60^R10^S{{n\choose 4}\choose Q}{{n\choose 5}\choose R}{{n\choose 5}\choose S}\ .$$
By a computation similar to that of \eqref{eq:comb-count},
$$p^{3k-Q-2R-2S}|\cF_{k,Q,R,S,\typ}| \leq (1+o(1))\frac{\mu^k}{k!}\frac{\lam_Q^Q}{Q!}\frac{\lam_R^R}{R!}\frac{\lam_S^S}{S!}\ .$$
Summing over all triples gives
$$Z_{\typ} \leq (1+o(1))\frac{\mu^k}{k!} \exp(\lam_Q + \lam_R + \lam_S)\ .$$
Our assumption $k = \Theta(\mu)$ implies $\log \frac{\mu^k}{k!} = O(k)$ and $\lam_Q + \lam_R + \lam_S = o(k)$. Thus 
$$Z_{\typ} \leq \exp(o(k \log n))\ .$$
Then
$$\nu_{p,k}^{\tilt}(\cF_{k,\typ}) = \frac{Z_{\typ}}{Z_k} \leq \frac{Z_{\typ}}{Z_{\cB}} = o(1)\ .$$
\zcref[S]{lemma:normal-is-typical} implies $\nu_{p,k}^{\tilt}(\cF_{k,\typ}) = (1-o(1))\nu_{p,k}^{\tilt}(\cF_{k,\elem})$; hence,
$\nu_{p,k}^{\tilt}(\cF_{k, \elem}^c) \geq 1-o(1)$
as claimed. \end{proof}

\subsection{Algorithm and Distribution of the Non-Triangle Edges}
Now we move on to proving \zcref[S]{theorem: sample}, which states that we can sample from a distribution $\pi_{\alg}$ in $O(n^3\log n)$ time such that 
    \begin{align}
        \| \pi_{\alg}-\pi_{p,k}\|_{\TV}=o(1).
    \end{align}
where we recall that $\pi_{p,k}$ is the distribution of $G(n,p)$ conditioned on having $k$ triangles.

 Our algorithm has two steps. First, we will sample the triangles in the graph. Then, we will sample the remaining edges using a Markov chain, which we will analyze with path coupling and burn-in. 

Fix a sufficiently large constant $T > 0$. The sampling algorithm is given below.
\begin{algorithm}[H]
\caption{Sampling Algorithm}\label{alg:cap}
\begin{algorithmic}
\State $G\gets $ empty graph on a set $V$ of $n$ vertices
\State $Q \gets \mathrm{Pois}(\lambda_Q) $
\State $R \gets \mathrm{Pois}(\lambda_R)$
\State $S \gets \mathrm{Pois}(\lambda_S)$
\If {$2Q+3R+3S>k$}
\State Let $G_0$ be the union of $k$ vertex-disjoint triangles.
\State $G\gets G_0$.
\Else
\State Independently and uniformly embed $k-2Q-3R-3S$ triangles, $Q$ diamonds, $R$ piglets, and $S$ 3-books on $[n]$, and let $G$ be the union of their edges.
\If {$|\Tri(G)| \neq k$}
\State Let $G_0$ be the union of $k$ vertex-disjoint triangles.
\State $G\gets G_0$.
\EndIf
\EndIf
\State Let $F = \Tri(G)$.
\For{$Tn^2\log n$ steps} Glauber dynamics on ${[n] \choose 2} \setminus E(G_F)$
\State choose $e \in {[n] \choose 2} \setminus E(G_F)$ uniformly at random
\If {$G\cup e$ would not form a new triangle}
\State $G \gets G \cup e$ with probability $p$
\State $G \gets G \setminus e$ with probability $1-p$
\EndIf
\EndFor
\end{algorithmic}
\end{algorithm}
\begin{proof}[Proof of Theorem~\ref{theorem: sample}]
Let $\nu_{\alg}$ be the distribution over $F \in \cF_k$ induced by the first part of $\pi_{\alg}$. We compare $\nu_{\alg}$ to $\nu_{p,k}$. First, we can see that $\rho^{\Pois}(2Q + 3R + 3S \leq k) = 1-o(1)$. Conditioned on any such valid $(Q, R, S)$, \zcref[S]{lemma: indept-conditional} shows that the uniformly embedded components form a configuration in $\cF_{k,Q,R,S,\typ}$ with probability $1-o(1)$, uniformly over $(Q, R, S)$, and conditioned on this event, the resulting configuration is distributed as $\nu_{k,Q,R,S,\typ}^{\unif}$. 

On the other hand, recall that $\nu_{p,k,\typ}^{\tilt}$ conditioned on $Q,R,S$ is exactly $\nu_{k,Q,R,S,\typ}^{\unif}$, and by \eqref{eq:tilttoPoisson}, the marginal distribution of $\nu_{p,k,\typ}^{\tilt}$ on $(Q,R,S)$ is within $o(1)$ total variation distance of the product Poisson law. The outcomes in Algorithm~\ref{alg:cap} resulting in either inadmissible triples $(Q,R,S)$ or embeddings that do not form a configuration in $\cF_{k,Q,R,S,\typ}$ have total measure $o(1)$. Hence,
$$\|\nu_{\alg} - \nu_{p,k,\typ}^{\tilt}\|_{\TV} = o(1)\ .$$
By the triangle inequality, \zcref[S]{theorem: dist}, and \zcref[S]{lemma:normal-is-typical}, it follows that
$$\|\nu_{\alg} - \nu_{p,k}\|_{\TV} = o(1)\ .$$

It remains to show that the distribution of the non-$\Tri(G)$ edges of $G \sim G(n,p)$ is close in total variation distance to the output of the Glauber dynamics, and that the algorithm runs in $O(n^3 \log n)$ time. 
Indeed, by the convexity of total variation distance and the triangle inequality, 
we then have that,
$$\|\pi_{\alg} - \pi_{p,k}\|_{\TV} \leq \|\nu_{\alg} - \nu_{p,k}\|_{\TV} + \sum_{F \in \cF_k} \nu_{\alg}(F)\|P^t_F - \pi_F\|_{\TV}$$
where $P^t_F$ denotes the output distribution on $\Omega_F := \{H : E(H) \supseteq E(G_F), \Tri(H) = F\}$ from running Glauber dynamics for $t$ steps starting from $E(G_F)$, and $\pi_F$ denotes the distribution on $\Omega_F$ where $G$ is sampled from $G(n,p)$ conditioned on $\Tri(G) = F$.

Fix $F \in \cF_{k, \typ}$. It suffices to restrict to $\cF_{k,\typ}$ since $\nu_{p,k}(\cF_{k,\typ}^c) = o(1)$ by the proof of \zcref[S]{main theorem} and $\nu_{\alg}(\cF_{k,\typ}^c) = o(1)$ by the preceding argument.

Our approach follows the path coupling method with burn-in for sampling triangle-free graphs in \cite{jenssen2024lower}, adapted to our setting.
To compare $P_F^t$ with $\pi_F$, we first compare $\pi_F$ with the unconditioned measure.
Let $G \sim \pi_F$. Let $B\subseteq \binom{[n]}{2}\setminus E(G_F)$ be obtained by including each edge independently with probability $p$, and define
\begin{align}
G' := \bigl([n],\, E(G_F)\cup B\bigr).
\end{align}
Let $\pi_F'$ be the resulting measure on such graphs $G'$. Although $\Tri(G') \supset F$, the graph $G'$ may contain additional triangles. We may thus couple $G$ and $G'$; indeed, $\pi_F$ is equivalent to $\pi_F'$ conditioned on the decreasing event of creating no new triangles, so by the FKG inequality, there is a coupling such that $G \subseteq G'$ with probability 1 and $\pi_F$ is stochastically dominated by $\pi_F'$. As a result, taking $D = C_0(np + \log n)$ for a fixed sufficiently large constant $C_0 > 0$, we have
$$\pi_F(\Delta(G) \geq D) \leq \pi_F'(\Delta(G') \geq D)\ .$$
Recall that $\Delta(G_F) \leq \log n$ for $F \in \cF_{k, \typ}$, so $\deg_{G'}(v) \leq \log n + \Bin(n,p)$ for $v \in V(G')$.
By Chernoff and union bound, we get that $\pi'_F(\Delta(G') \geq D) \leq n^{-10}$.

Now let $X_0 \in \Omega_F$ be arbitrary and let $(X_t)_{t\geq0}$ be a run of the Glauber dynamics with stationary distribution $\pi_F$.
We claim that there exists $C_1 > 0$ such that for any $\eta > 0$, if $t \geq C_1n^2 \log(n/\eta)$, then 
\begin{align}
    \Pro[\Delta(X_t) \geq D]
    \leq n^{-10} + \eta.
\end{align}

To prove this, let $\Omega_F' := \{H : E(H) \supseteq E(G_F)\}$. Since $\Omega_F \subseteq \Omega_F'$, define a Markov chain $(Y_t)_{t \geq 0}$ on $\Omega_F'$ as follows: let $Y_0 \in \Omega_F'$ be arbitrary. To obtain $Y_t$ from $Y_{t-1}$, choose one of the $\gamma := {n \choose 2} - e_F$ pairs not in $E(G_F)$ uniformly at random and resample it by including it in $Y_t$ with probability $p$. 

Define a coupling $(X_t, Y_t)_{t \geq 0}$ as follows: set $X_0 = Y_0$. At each step, choose the same pair $e \in {[n] \choose 2} \setminus E(G_F)$ , and use the same $p$-biased coin to determine the update in both chains. The result is that $X_t \subseteq Y_t$ for all $t \geq 0$.  

For the remainder of the proof, we use $\Pro$ and $\E$ to denote probabilities and expectations with respect to the coupling under discussion. 

Since the stationary distribution of $(Y_t)_{t \geq 0}$ is precisely $\pi_F'$, the total variation distance between the law of $Y_t$ and $\pi_F'$ is at most the probability that some edge has not yet been updated, so
$$\|\mathrm{Law}(Y_t) - \pi_F'\|_{\TV} \leq \gamma\left(1- \frac{1}{\gamma}\right)^t \leq \gamma e^{-t/\gamma}\ .$$
By taking $t \geq \gamma \log(\gamma/\eta)$, we have
$$\|\mathrm{Law}(Y_t) - \pi_F'\|_{\TV} \leq \eta\ .$$
Set $\eta = n^{-10}$. Then for $t \geq C_1n^2\log n$ where $C_1 > 0$ is some fixed constant,
\begin{align}
    \Pro[\Delta(X_t) \geq D] &\leq \Pro[\Delta(Y_t) \geq D]\\
    &\leq \pi_F'(\Delta(G') \geq D) + \|\mathrm{Law}(Y_t) - \pi_F'\|_{\TV}\\
    &\leq 2n^{-10}\ . \label{eqn:burn-in-prob}
\end{align}

This result allows us to restrict our attention to the bounded-degree regime 
\begin{align}
\Omega_D:=\left\{G \in \Omega_F:\Delta(G)\le D\right\}.
\end{align}

Let $d(\cdot,\cdot)$ denote Hamming distance on the set of non-fixed edges $\binom{[n]}{2}\setminus E(G_F)$. Let $(Z_t)_{t \geq 0}$ be another copy of the Glauber dynamics where $Z_0 \sim \pi_F$ (so in particular, $Z_t \sim \pi_F$ for all $t \geq 0$). Given $(X_t, Z_t)$ where $t \geq C_1n^2 \log n$, we suppose first that $d(X_t, Z_t) = 1$. Couple one step of the Glauber dynamics by choosing an edge $\phi\in \binom{[n]}{2}\setminus E(G_F)$
uniformly at random and attempting to update it in both graphs by maximally coupling the two conditional distributions for the updated status of $\phi$. Indeed, if $X_t$ and $Z_t$ differ at the edge $\{i, j\}$, then we may couple the updates so that $X_{t+1} = Z_{t+1}$ as long as $\phi$ does not complete a triangle in exactly one of the graphs. The latter occurs if and only if either $\phi = \{i, v\}$ with $\{j, v\} \in E(X_t \cap Z_t)$ or $\phi = \{j,v\}$ with $\{i,v\} \in E(X_t \cap Z_t)$, for some $v \in V \setminus \{i,j\}$. Since $X_t,Z_t\in \Omega_D$, there are at most $2D$ such edges.

For each such edge, the two conditional Bernoulli update laws differ by at most $p$, so under a maximal coupling the updated values disagree with probability at most $p$. Therefore
$$\E\left[d(X_{t+1},Z_{t+1}) \,\middle|\, X_t,Z_t\in \Omega_D,\ d(X_t,Z_t)=1\right]
\le 1-\frac{1}{\gamma}+\frac{2Dp}{\gamma} = 1-\frac{1-2Dp}{\gamma}.$$

Writing $\delta:=1-2Dp$, we obtain
\begin{align}
\E\!\left[d(X_{t+1},Z_{t+1}) \,\middle|\, X_t,Z_t\in \Omega_D,\ d(X_t,Z_t)=1\right]
\le \left(1-\frac{\delta}{\gamma}\right).
\label{eq:path-coupling-local}
\end{align}

Now suppose that $d(X_t, Z_t) > 1$. To pass from pairs at distance $1$ to arbitrary pairs in $\Omega_D$, note that if $X, Z\in \Omega_D$ then there is a path
\begin{align}
X=W_0,W_1,\dots,W_{d(X,Z)}=Z
\end{align}
inside $\Omega_D$ with $d(W_{i},W_{i+1})=1$ for every $i \geq 0$. Indeed, first delete the edges of $X\setminus Z$ one at a time, and then add the edges of $Z\setminus X$ one at a time. Deleting edges cannot create new triangles or increase degrees, and once all edges of $X\setminus Z$ have been removed, the remaining edges comprise $E(X \cap Z)$; after that, each added edge belongs to $Z$, so every intermediate graph is a subgraph of $Z$ and hence still lies in $\Omega_D$. Thus the path-coupling theorem of Bubley and Dyer \cite{bubley1997path} applies, and \eqref{eq:path-coupling-local} implies that for all $X,Z \in \Omega_D$,
\begin{align}
\E\!\left[d(X_{t+1},Z_{t+1}) \,\middle|\, X_t=X,\ Z_t=Z,\ X,Z\in \Omega_D\right]
\le \left(1-\frac{\delta}{\gamma}\right)d(X,Z).
\label{eq:path-coupling-global}
\end{align}

Let $\cE_t$ be the event $\{X_t, Z_t \in \Omega_D\}$. Let $d_t = d(X_t, Z_t)$. By linearity of expectation, $$\E[d_{t+1}] = \E[d_{t+1} \mathds{1}_{\cE_{t}}] + \E[d_{t+1} \mathds{1}_{\cE_{t}^c}]\ .$$ 
Since $d(X, Z) \leq \gamma$ for all $X, Z \in \Omega_F$, we have for the second term
$$\E[d_{t+1} \mathds{1}_{\cE_{t}^c}] \leq \gamma \Pro[\cE_t^c] = O(n^{-8})$$
where we use $\gamma \leq {n \choose 2}$, \eqref{eqn:burn-in-prob} for $X_t$, and stationarity of $Z_t$ to obtain the final expression. For the first term, applying Law of Total Expectation gives
\begin{align*} 
    \E[d_{t+1} \mathds{1}_{\cE_{t}}] &= \E[\E[d_{t+1} \mid X_t, Z_t] \mathds{1}_{\cE_{t}}] \\
    &\leq \left(1-\frac{\delta}{\gamma}\right)\E[d_t\mathds{1}_{\cE_t}] \\
    &\leq \left(1-\frac{\delta}{\gamma}\right)\E[d_t]\ .
\end{align*} 
Thus, for every $t_0 \geq C_1n^2\log n$, 
$$\E[d_{t_0+1}] \leq \left(1-\frac{\delta}{\gamma}\right)\E[d_{t_0}] + O(n^{-8})\ .$$
Iterating $t$ times, we obtain the bound
\begin{align*} 
    \E[d_{t_0 + t}] &\leq \left(1-\frac{\delta}{\gamma}\right)^t \E[d_{t_0}] + O(n^{-8})\sum_{i=0}^{t-1}\left(1-\frac{\delta}{\gamma}\right)^i\\
    &\leq \left(1-\frac{\delta}{\gamma}\right)^t \gamma + O(tn^{-8})\\
    &\leq n^2 \exp\left(-\frac{\delta t}{\gamma}\right) + O(tn^{-8})
\end{align*}
Since $\delta = 1-o(1)$ and $\gamma = O(n^2)$, taking $t_0 = C_1n^2 \log n$ and $t = C_2n^2 \log n$ for a sufficiently large constant $C_2 > 0$, we have
$$\Pro[X_{t_0 + t} \neq Z_{t_0+t}] \leq \E[d_{t_0+t}] \leq n^2 \exp\left(-\frac{\delta t}{\gamma}\right) + O(tn^{-8}) = o(1)\ .$$

Thus, choosing the constant $T$ in Algorithm~\ref{alg:cap} so that $T \geq C_1 + C_2$ gives
$$\sup_{F \in \cF_{k,\typ}} \|P_F^{Tn^2\log n} - \pi_F\|_{\TV} = o(1)\ .$$
Hence,
$$\sum_{F \in \cF_k} \nu_{\alg}(F)\|P^{Tn^2\log n}_F - \pi_F\|_{\TV} \leq
\nu_{\alg}(\cF_{k,\typ}^c)+\sup_{F\in\cF_{k,\typ}}\|P_F^{Tn^2\log n}-\pi_F\|_{\TV}=o(1)\ .$$
and implies both that $\|\pi_{\alg} - \pi_{p,k}\|_{\TV} = o(1)$
and that the Glauber chain has mixing time $O(n^2 \log n)$ over typical $F$.

Since each step of the Glauber dynamics can be implemented in linear time, this means that Algorithm \ref{alg:cap} is a $O(n^3\log n)$-time algorithm, completing the proof.
\end{proof}

\bibliographystyle{abbrv}
\bibliography{refs}

\end{document}